\numberwithin{equation}{section}
\theoremstyle{plain}
\newtheorem{thm}{Theorem}[section]
\newtheorem{lem}{Lemma}[section]
\newtheorem{prop}{Proposition}[section]
\theoremstyle{definition}
\newtheorem{rmk}{Remark}[section]
\newcommand{\be}{\begin{equation}}
\newcommand{\ee}{\end{equation}}
\newcommand{\bea}{\begin{eqnarray}}
\newcommand{\eea}{\end{eqnarray}}
\newcommand{\beas}{\begin{eqnarray*}}
\newcommand{\eeas}{\end{eqnarray*}}
\newcommand{\rd}{{\text{\rm d}}}
\DeclareMathOperator{\Exp}{e}
\newcommand{\f}{\mathbf{f}}
\newcommand{\bp}{\mathbf{p}}
\newcommand{\bq}{\mathbf{q}}
\newcommand{\bu}{\mathbf{u}}
\newcommand{\bv}{\mathbf{v}}
\newcommand{\bw}{\mathbf{w}}
\newcommand{\bx}{\mathbf{x}}
\newcommand{\mB}{\mathcal{B}}
\newcommand{\mC}{\mathcal{C}}
\newcommand{\mD}{\mathcal{D}}
\newcommand{\mL}{\mathcal{L}}
\newcommand{\mV}{\mathcal{V}}
\newcommand{\bxi}{\boldsymbol{\xi}}
\newcommand{\bta}{\boldsymbol{\eta}}
\newcommand{\bd}{\boldsymbol{\delta}}
\newcommand{\bphi}{\boldsymbol{\varphi}}
\newcommand{\bpsi}{\boldsymbol{\psi}}
\newcommand{\beps}{\boldsymbol{\varepsilon}}
\newcommand{\tvN}{\widetilde{\bv_N}}
\newcommand{\dist}{\textnormal{dist}}
\newcommand{\vn}{\bv_N}
\definecolor{mygreen}{RGB}{0,102,34}
\begin{document}

\title[Uniform in Time Error Estimates for Fully Discrete DA Algorithms]{Uniform in Time Error Estimates for Fully Discrete Numerical Schemes of a  Data Assimilation Algorithm}

\date{May 3, 2018}

\author[H. A. Ibdah]{Hussain A. Ibdah}
\address{\textnormal{(Hussain A. Ibdah)} Department of Mathematics\\
Texas A\&M University\\ College Station, TX 77843, USA.}
\email[H. A. Ibdah] {hibdah@math.tamu.edu}

\author[C. F. Mondaini]{Cecilia F. Mondaini}
\address{\textnormal{(Cecilia F. Mondaini)} Department of Mathematics\\
Texas A\&M University\\ College Station, TX 77843, USA.}
\email[C. F. Mondaini] {cfmondaini@gmail.com}

\author[E. S. Titi]{Edriss S. Titi}
\address{\textnormal{(Edriss S. Titi)} Department of Mathematics\\
Texas A\&M University\\ College Station, TX 77843, USA. {\bf ALSO}, Department of Computer Science and Applied Mathematics, Weizmann Institute of Science, Rehovot 76100, Israel.}
\email[E. Titi] {titi@math.tamu.edu and edriss.titi@weizmann.ac.il}


\begin{abstract}
	We consider fully discrete numerical schemes for a downscaling data assimilation algorithm aimed at approximating the velocity field of the 2D Navier-Stokes equations corresponding to given coarse mesh observational measurements.  The time discretization is done by considering semi- and fully-implicit Euler schemes, and the spatial discretization is based on a spectral Galerkin method. The two fully discrete algorithms are shown to be unconditionally stable, with respect to the size of the time step, number of time steps and the number of Galerkin modes. Moreover, explicit, uniform in time error estimates between the fully discrete solution and the reference solution corresponding to the observational coarse mesh measurements are obtained, in both the $L^2$ and $H^1$ norms. Notably, the two-dimensional Navier-Stokes equations, subject to the no-slip Dirichlet or periodic boundary conditions, are used in this work as a paradigm. The complete analysis that is presented here can be extended to other two- and three-dimensional dissipative systems under the assumption of global existence and uniqueness.
\end{abstract}

\subjclass[2010]{35B42, 35Q30, 37L65, 65M12, 65M15, 65M70, 76B75, 93B52, 93C20}

\keywords{data assimilation, downscaling, nudging, feedback control, Navier-Stokes equations, Galerkin method, postprocessing, implicit Euler schemes, stability of numerical schemes, uniform error estimates}

\maketitle

\section{Introduction}

Predicting the future state of certain physical and biological systems is crucial in several different contexts such as in meteorology, oceanography, oil reservoir management, neuroscience, medical science, stock market, etc. Most applications deal with a complex physical system, possessing a large number of degrees of freedom. Theoretical models attempt to capture the complex dynamics of such systems, but often can only be derived under simplifying assumptions which limit its ability to represent reality. Observational measurement data can be used to adjust the model towards reality, but it also presents limitations. Usually data is only available on a coarse spatial mesh and, in addition, is commonly contaminated by errors. The field of downscaling data assimilation comprises the set of techniques used for suitably combining the theoretical model with the observed data in order to obtain an accurate prediction of the future state of the system.


Several data assimilation methods have been developed along the years by a growing community of researchers (see, e.g.,  \cite{AschBocquetNodetbook2016,Daleybook1993,HarlimMajdabook2012,Kalnaybook2003,LawStuartZygalakisbook2015,ReichCotterbook2015} and references therein). In this paper, we focus on the \emph{nudging} (or \emph{Newtonian relaxation}) method. The idea consists in adding an extra term to the original model with the purpose of relaxing the coarse scales of the solution of the modified model towards the spatially coarse observations. Some earlier works have implemented this approach in the context of control theory and for models given as ordinary differential equations (ODEs) \cite{Nijmeijer2001,Thau1973}; while others have provided tentative extensions for models given as partial differential equations (PDEs) \cite{Anthes1974,HokeAnthes1976}. A rigorous treatment was given in \cite{AzouaniOlsonTiti2014} (see also \cite{AzouaniTiti2013}), where a general framework was introduced that can be applied to a large class of dissipative PDEs and various types of observables. Indeed, the broad applicability and complete analysis of this framework has been demonstrated in several works \cite{AlbanezNussenzveigTiti2016,BiswasFoiasMondainiTiti2018,BiswasLariosPei2017,BiswasMartinez2017,BlocherMartinezOlson2017,FarhatJohnstonJollyTiti2017,FarhatJollyTiti2015,FarhatLunasinTiti2016a,FarhatLunasinTiti2016b,FarhatLunasinTiti2016c,FarhatLunasinTiti2017,FoiasMondainiTiti2016,GeshoOlsonTiti2016,JollyMartinezTiti2017,LunasinTiti2015,MarkowichTitiTrabelsi2016, MondainiTiti2018} for 2D and 3D dissipative systems that enjoy the global existence, uniqueness, and finite number of asymptotic (in time) determining parameters.  

In order to illustrate the idea introduced in \cite{AzouaniOlsonTiti2014}, let us consider a system modeled by the two-dimensional incompressible Navier-Stokes equations (2D NSE), given on a spatial domain $\Omega \subset \mathbb{R}^2$ and time interval $(0,\infty) \subset \mathbb{R}$ by
\be\label{classicalNSE}
  \partial_t\bu - \nu \Delta\bu + (\bu\cdot\nabla)\bu + \nabla p = \f, \quad \nabla \cdot \bu = 0, \quad (\bx,t) \in \Omega \times (0, \infty),
\ee
where $\bu = \bu(\bx,t)$ and $p = p(\bx,t)$ are the unknowns and denote the velocity vector field and the pressure, respectively; while $\nu > 0$ and $\f = \f(\bx)$ are given and denote the kinematic viscosity parameter and the body forces applied to the fluid per unit mass, respectively. The 2D NSE are used here as a paradigm of a system for which we can provide the complete analysis and explicit estimates, in terms of the physical parameters, without any {\it ad hoc} assumptions on the global existence, uniqueness, and the size of its solutions. Predictions of the future state of the system can be obtained by providing \eqref{classicalNSE} with a suitable initial condition $\bu(0) = \bu_0$ and integrating \eqref{classicalNSE} until the targeted future time. Given coarse-scale measurements, the problem in traditional data assimilation algorithms consists of finding an initial condition that is a good enough approximation of the present state so that an accurate future prediction can be computed.

Contingent to the analytical tools that will be used in this paper to obtain error estimates, we assume that data is assimilated continuously and is free of errors. In this case, the algorithm introduced in \cite{AzouaniOlsonTiti2014} consists in finding a solution to the following approximate system:
\be\label{classicalDA}
	\partial_t\bv - \nu\Delta\bv + (\bv\cdot\nabla)\bv + \nabla \widetilde{p} = \f - \beta  I_h (\bv - \bu), \quad \nabla\cdot\bv=0, \quad (\bx,t) \in \Omega \times (0, \infty),
\ee
where $\bv = \bv(\bx,t)$ is the approximate velocity vector field and $\widetilde{p} = \widetilde{p}(\bx,t)$ is the associated pressure; $\nu$ and $\f$ are the same from \eqref{classicalNSE}; $h > 0$ denotes the spatial resolution of the measurements; $I_h$ is a finite-rank linear interpolant operator in space; and $\beta > 0$ is the relaxation (or nudging) parameter. The purpose of the second term in the right-hand side of \eqref{classicalDA}, called the feedback-control (nudging) term, is to force the coarse spatial scales of $\bv$, represented by $I_h(\bv)$, towards the given spatially coarse observations of $\bu$, represented by $I_h(\bu)$.

In \cite{AzouaniOlsonTiti2014}, the authors prove that under suitable assumptions on $I_h$, $\beta$ and $h$, the solution $\bv$ of \eqref{classicalDA} corresponding to $I_h(\bu)$ and an arbitrary initial data $\bv(0) = \bv_0$ converges exponentially in time to the reference solution $\bu$ of \eqref{classicalNSE}. The key idea behind this result is the fact that, in general, the long-time behavior of dissipative evolution equations is determined by only a finite number of degrees of freedom \cite{CockburnJonesTiti1997,CockburnJonesTiti1995,FoiasProdi1967,FoiasTemam1984,FoiasTiti1991}, which are represented by the coarse mesh part of the solution. Therefore, given measurements $I_h(\bu(\cdot))$ over a long enough time period $[0,T]$, the value $\bv(T)$ can be used as a proper initialization of \eqref{classicalNSE} from which a future prediction can be made.

However, solutions $\bv$ of \eqref{classicalDA} can only be computed, in practice, through finite-dimensional numerical approximations. A natural question is thus to determine the error between a numerical approximation of $\bv$ and the corresponding (infinite-dimensional) reference solution $\bu$ of \eqref{classicalNSE}. In addition to providing efficient quantitative approximation, numerical schemes should also preserve the qualitative dynamical features of the underlying PDEs,  such as dissipation, symmetry, symplectic structure (for certain Hamiltonian systems) and so forth, as it has been advocated in \cite{FoiasJollyKevrekidisTiti1990, FoiasJollyKevrekidisTiti1994, JollyKevrekidisTiti1991} and references therein. In our case, we are concerned with designing efficient numerical schemes which preserve the dissipation property of \eqref{classicalDA}.

 In \cite{MondainiTiti2018}, these questions are addressed for a spatial discretization of \eqref{classicalDA} given by a Galerkin and then Postprocessing Galerkin method; see section \ref{subsecPPGM}, below. Notably, the error estimates obtained in \cite{MondainiTiti2018} are uniform in time, a consequence of the fact that, under the appropriate conditions on $\beta$ and $h$, the feedback-control (nudging) term in \eqref{classicalDA} imposes a stabilizing mechanism by controlling the large scale instabilities caused by the nonlinear term.

Our goal here is to address the same questions, but in the case of a fully (space and time) discrete numerical approximation of \eqref{classicalDA} by taking a time discretization of the Galerkin spatial approximation scheme given in \cite{MondainiTiti2018}. We analyze two types of implicit Euler schemes: fully-implicit and semi-implicit. The difference lies in the way the nonlinear term in \eqref{classicalDA} is discretized (cf. \eqref{semimp} and \eqref{fullimp}, below). We obtain the following results:
\begin{enumerate}[(i)]
	\item Existence and uniqueness of solutions to both time-discrete schemes (Propositions \ref{uniqsemi} and \ref{exsfull}, and Theorem \ref{fulldecay}, below).
	\item\label{resintroii} Stability (i.e. uniform boundedness with respect to the number of Galerkin modes, time step size and number of time steps) in the $(L^2(\Omega))^2$ and $(H^1(\Omega))^2$ norms (Theorems \ref{semibounds} and \ref{fullbounds}, below). Both schemes, fully- and semi-implicit, are unconditionally stable in this sense.
	\item Continuous dependence on the initial data in various norms (Theorems \ref{dependonICsemi} and \ref{fulldecay}, below). In fact, we prove a stronger result: the difference between any two solutions of the numerical schemes corresponding to different initial data converge to zero as the number of time steps increase. This is valid under a smallness assumption on the time step in the semi-implicit case, and unconditionally in the fully-implicit case.
	\item Explicit error estimates (in the $(L^2(\Omega))^2$ and $(H^1(\Omega))^2$ norms) between the solution of each time-discrete scheme and the corresponding continuous in time solution (Theorems \ref{l2convsemi}, \ref{h1errorsemi} and \ref{thmL2convFullImp}, below). Such error estimates are uniform with respect to the number of time steps. Combined with the results from \cite{MondainiTiti2018}, these yield error estimates between each fully discrete numerical approximation of \eqref{classicalDA} and the corresponding reference solution of \eqref{classicalNSE} (Theorems \ref{thmerrorfulldiscSI} and \ref{thmerrorfulldiscFI}, below), which are also uniform with respect to the number of time steps.
\end{enumerate}

The literature is saturated with various discrete in time numerical schemes that are aimed at approximating the solutions to various dissipative PDEs in general, and to \eqref{classicalNSE} in particular. Any such scheme could, in theory, be applied to approximate \eqref{classicalDA}. Therefore, it is difficult to do justice to all of the work that has been done and list it here. Long time stability and finite time error analysis for various numerical schemes associated to \eqref{classicalNSE} was done previously, see for example, \cite{GottliebToneWang2012, Guermond2015, Heister2017, Ju2002, Shen1990, Shen1992b, Shen1992, ToneWirosoetisno2006}. In this paper we consider the simple schemes studied in \cite{Ju2002} and \cite{ToneWirosoetisno2006}. We notice that a similar numerical analysis in the context of control theory (and with a different form of the feedback-control term) was studied in \cite{GunzburgerManservisi2000}.

 We remark that the previously mentioned results, (i)-(iv), of the discretized version of \eqref{classicalDA} are proved by relying heavily on the extra stabilizing mechanism provided by the feedback-control (nudging) term (cf. inequalities \eqref{displ2} and \eqref{disph1}, below). This allows us to avoid the use of (discrete) uniform Gronwall-type inequalities (cf. \cite{Ju2002}), thereby resulting in sharper estimates, or requiring any smallness assumptions on the time step (cf. \cite{ToneWirosoetisno2006}). Furthermore, one would expect only some of the results (i)-(iv) to hold for the schemes as applied to \eqref{classicalNSE} with unstable dynamics. In particular, one would not expect result (iii) to hold when approximating \eqref{classicalNSE} by such schemes.

Lastly, we emphasize that the 2D NSE is considered here only as a paradigm. Similar results can be obtained for other dissipative evolution equations, such as the 3D Navier-Stokes-$\alpha$ model \cite{AlbanezNussenzveigTiti2016}, the 2D B\'enard convection equations \cite{Altaf2017}, and other models considered in \cite{BiswasLariosPei2017}, \cite{FarhatJohnstonJollyTiti2017}-\cite{FarhatLunasinTiti2017} and \cite{MarkowichTitiTrabelsi2016}.

This paper is organized as follows. In section \ref{secPrelim}, we briefly review the necessary material concerning the 2D NSE (subsection \ref{subsecNSE}), the nudging equation \eqref{classicalDA} (subsection \ref{subsecDA}) and its spatial discretization given by a Postprocessing Galerkin method (subsection \ref{subsecPPGM}). In section \ref{secMainRes}, we present the analysis of the semi-implicit (subsection \ref{subsecSI}) and fully-implicit (subsection \ref{subsecFI}) time-discrete schemes. Finally, in the Appendix we present bounds of the Galerkin approximation of a solution to \eqref{classicalDA} and its time derivative, in some high order Sobolev norms. 

\section{Preliminaries}\label{secPrelim}
In this section, we briefly recall the necessary background concerning the two-dimensional incompressible Navier-Stokes equations \eqref{classicalNSE}, the feedback-control (nudging) data assimilation algorithm \eqref{classicalDA} and its spatial discretization given by the Postprocessing Galerkin method. More detailed discussions related to each topic can be found, e.g., in \cite{ConstantinFoiasbook,Temambook1995, TemamDSbook1997, Temambook2001}, \cite{AzouaniOlsonTiti2014} and \cite{GarciaArchillaNovoTiti1998,GarciaArchillaNovoTiti1999,MondainiTiti2018}, respectively.

\subsection{Two-dimensional incompressible Navier-Stokes equations}\label{subsecNSE}

We consider system \eqref{classicalNSE} with either periodic or no-slip Dirichlet boundary conditions. We assume that the forcing term $\f$ is time independent with values in $(L^2\left(\Omega\right))^2$. However, we remark that all the results concerning stability of the discrete schemes associated to \eqref{classicalDA} (Theorems \ref{semibounds}, \ref{dependonICsemi}, \ref{fullbounds} and \ref{fulldecay}, below) are still valid if we assume $\f \in L^{\infty}([0,\infty); (L^2(\Omega))^2)$. On the other hand, we show that, under the hypothesis of time-independent forcing term, one is able to obtain uniform in time strong error estimates (Theorems \ref{l2convsemi}, \ref{h1errorsemi} and \ref{thmL2convFullImp}, below). Time independence can be relaxed further by assuming that the forcing term is time-analytic and bounded in a strip of the complex plane containing the real line. Such assumptions are sufficient because they imply that the solutions to both \eqref{classicalNSE} and \eqref{classicalDA} become time-analytic in such a strip, provided data is assimilated continuously in time and is error-free (see \cite{FoiasJollyLanRupamYangZhang2014}, \cite{FoiasManleyTemam1988} and \cite{foiastemam1979} for more details regarding the analyticity of the solution to \eqref{classicalNSE} and the Appendix for the time analyticity of the solution to \eqref{classicalDA}). This allows us to use analytic tools to bound the solutions and their derivatives in various (high order) Sobolev norms uniformly in time, thereby allowing for uniform in time error estimates.

In what follows, we adopt the notation used in \cite{ConstantinFoiasbook} (see also \cite{Temambook1995, TemamDSbook1997, Temambook2001}). In the case of no-slip boundary conditions, we assume that $\Omega$ is an open, bounded and connected set with a $C^2$ boundary. We denote by $\mathcal{V}$ the set of all smooth, compactly supported, divergence free, two-dimensional vector fields defined on $\Omega$. In the case of periodic boundary conditions, we consider $\Omega=(0,L)\times(0,L)$, for some $L>0$, as the fundamental domain of periodicity. Moreover, we assume that $\f$ is periodic in both spatial directions (with period $L$) and has zero spatial average over $\Omega$ in the latter case. We abuse notation and denote again by $\mathcal{V}$ the set of all divergence free, two-dimensional trigonometric polynomial vector fields with period $L$ in both spatial directions, having zero spatial averages. Also, we denote by $H$ and $V$ the closure of $\mathcal{V}$ in $(L^2(\Omega))^2$ and $(H^1(\Omega))^2$, respectively, regardless of the boundary conditions being considered.

We equip the spaces $H$ and $V$ with the bilinear forms $(\cdot,\cdot)$ and $(\!(\cdot,\cdot)\!)$ defined by
\[ (\bpsi,\bphi ):=\int_{\Omega}\bpsi(\bx)\cdot\bphi(\bx) \rd \bx \quad \forall \bpsi,\bphi\in H,
\]
\[ (\!(\bpsi,\bphi)\!):=\int_{\Omega}\sum_{j=1}^2\frac{\partial\bpsi(\bx)}{\partial x_j}\cdot\frac{\partial\bphi(\bx)}{\partial x_j} \rd \bx \quad \forall \bpsi,\bphi\in V.
\]
It is clear that $(\cdot,\cdot)$ defines an inner product on $H$ and the fact that $(\!(\cdot,\cdot)\!)$ defines an inner product on $V$ follows from the Poincar\'e inequality, given by
\be\label{Poincare}
  \lambda_1^{1/2}\left|\bphi\right|\leq \left\|\bphi\right\|  \quad \forall \bphi\in V,
\ee
where $\lambda_1$ is the first eigenvalue of the Stokes operator, defined in \eqref{defStokesop}, below. We denote the norms induced from $(\cdot,\cdot)$ and $(\!(\cdot,\cdot)\!)$ by $|\cdot|$ and $\|\cdot\|$, respectively. Moreover, we denote by $H'$ and $V'$ the dual spaces of $H$ and $V$, respectively. We identify $H$ with its dual, so that $V\hookrightarrow H\cong H' \hookrightarrow V'$, with the injections being continuous and compact, and each space dense in the following one. Also, we denote by $\langle \cdot, \cdot \rangle$ the natural duality pairing between $V$ and $V'$, i.e., the action of $V'$ on $V$.

For every $R > 0$, we denote by $B_H(R)$ and $B_V(R)$ the closed balls centered at zero and with radius $R$ with respect to the norms in $H$ and $V$, respectively.

Let $P_{\sigma}: (L^2(\Omega))^2 \rightarrow H$ denote the Leray-Helmholtz projector, and let $A: \mD(A) \subset V \to V'$ and $B(\cdot,\cdot): V \times V \to V'$ be the operators defined as the continuous extensions of
\be\label{defStokesop}
 A(\bpsi):=-P_{\sigma}\Delta \bpsi \quad \forall \bpsi\in\mathcal{V}
\ee
and
\be B\left(\bpsi,\bphi\right):=P_{\sigma}\left(\left(\bpsi\cdot\nabla\right)\bphi\right) \quad \forall (\bpsi,\bphi) \in\mV \times \mV,	
\ee
respectively. We recall that $A$ is called the Stokes operator and $\mD(A) = (H^2(\Omega))^2\cap V$. Moreover, $A:\mD(A)\rightarrow H$ is a positive definite and self-adjoint operator with a compact inverse. Therefore, $H$ admits an orthonormal basis of eigenvectors $\{\bw_j\}_{j\in\mathbb{Z^+}}$ of $A$ corresponding to a nondecreasing sequence of positive eigenvalues $\{\lambda_j\}_{j \in \mathbb{Z^+}}$, with $\lambda_j \to \infty$ as $j \to \infty$ (see, for instance, \cite{ConstantinFoiasbook,Temambook1995, TemamDSbook1997, Temambook2001}). For each $N \in \mathbb{Z^+}$, we denote by $P_N$ the orthogonal projector of $H$ onto $\text{span}\{\bw_1, \ldots, \bw_N\} = P_N H$.

The bilinear operator $B$ satisfies the following orthogonality property:
\be\label{propBorthog1}
 \langle B\left(\bu_1,\bu_2\right), \bu_3\rangle = -\langle B\left(\bu_1,\bu_3\right), \bu_2 \rangle \quad \forall \bu_1,\bu_2, \bu_3 \in V,
\ee
which implies, in particular, that
\be\label{propBorthog2}
 \langle B(\bu_1,\bu_2), \bu_2\rangle = 0 \quad \forall \bu_1, \bu_2 \in V.
\ee

Moreover, the following inequalities hold:
\begin{enumerate}[(i)]
\item\label{ineqBii} For every $\bu_1 \in V$, $\bu_2 \in \mathcal{D}(A)$ and $\bu_3 \in H$,
\be\label{estnonlineartermL4L4L2}
 \left|\langle B\left(\bu_1,\bu_2\right),\bu_3\rangle\right| \leq c_2 \left|\bu_1\right|^{1/2} \left\|\bu_1\right\|^{1/2} \left\|\bu_2\right\|^{1/2} \left|A\bu_2\right|^{1/2} \left|\bu_3\right|;
\ee
\item\label{ineqBiii} For every $\bu_1 \in H$, $\bu_2 \in \mD(A)$ and $\bu_3 \in V$,
\be\label{estnonlineartermL2L4L4}
 \left|\langle B\left(\bu_1,\bu_2\right),\bu_3\rangle\right| \leq c_3 \left|\bu_1\right| \left\|\bu_2\right\|^{1/2} \left|A\bu_2\right|^{1/2} \left|\bu_3\right|^{1/2}\left\|\bu_3\right\|^{1/2};
\ee
\item\label{ineqBvi} For every $\bu_1, \bu_3\in \mathcal{D}(A)$ and $\bu_2\in V$, with $\bu_1 \neq 0$,
\be\label{ineqBG}
\left|\langle B\left(\bu_1,\bu_2\right),A\bu_3\rangle\right| \leq c_B \left\|\bu_1\right\| \left\|\bu_2\right\| \left|A\bu_3\right| \left[ 1 + \log\left( \frac{\left|A\bu_1\right|}{\lambda_1^{1/2}\left\|\bu_1\right\|}\right) \right]^{1/2};
\ee
\item\label{ineqBiv} For every $\bu_1, \bu_2, \bu_3 \in V$, with $\bu_3 \neq 0$,
\be\label{ineqTiti1}
\left|\langle B\left(\bu_1,\bu_2\right),\bu_3\rangle\right| \leq c_T \left\|\bu_1\right\| \left\|\bu_2\right\| \left|\bu_3\right| \left[ 1 + \log\left( \frac{\left\|\bu_3\right\|}{\lambda_1^{1/2}\left|\bu_3\right|}\right) \right]^{1/2};
\ee
\item\label{ineqBv} For every $\bu_1\in V$ and $\bu_2, \bu_3 \in \mathcal{D}(A)$, with $\bu_2 \neq 0$,
\be\label{ineqTiti2}
\left|\langle B\left(\bu_1,\bu_2\right),A\bu_3\rangle\right| \leq c_T \left\|\bu_1\right\| \left\|\bu_2\right\| \left|A\bu_3\right| \left[ 1 + \log\left( \frac{\left|A\bu_2\right|}{\lambda_1^{1/2}\left\|\bu_2\right\|}\right) \right]^{1/2};
\ee
\end{enumerate}
where $c_1, c_2, c_3, c_T$ and $c_B$ are (dimensionless) absolute constants. The proofs of inequalities \eqref{estnonlineartermL4L4L2} and \eqref{estnonlineartermL2L4L4} follow by a suitable application of H\"older's inequality, complemented by Sobolev embedding and interpolation theorems when $\bu_1, \bu_2, \bu_3 \in \mV$, and in the general cases by using a density argument and the continuity of $B$ (cf. \cite{ConstantinFoiasbook,Temambook1995, TemamDSbook1997, Temambook2001}). The proof of inequality \eqref{ineqBG} follows by using the Br\'ezis-Gallouet inequality (see \cite{BrezisGallouet1980,FMTT1983}, see also \cite{Titi1987}), while inequalities \eqref{ineqTiti1} and \eqref{ineqTiti2} were proved in \cite{Titi1987}.

In addition, we have the following inequality valid for every $\alpha > 1/2$ and $\bu_1, \bu_2 \in V$ (see \cite[Proposition 6.1]{ConstantinFoiasbook}):
\be\label{ineqBAalpha}
   |A^{-\alpha}(B(\bu,\bu) - B(\bv,\bv))| \leq c_\alpha  |\Omega|^{\alpha - \frac{1}{2}} \|\bu + \bv\| |\bu - \bv|,
\ee
where $|\Omega|$ denotes the area of $\Omega$ and $c_\alpha > 0$ is a constant depending on $\alpha$ via the Sobolev constants from the Sobolev embeddings of $H^{2\alpha}(\mathbb{R}^2)$ into $L^{\infty}(\mathbb{R}^2)$ and of $H^s(\mathbb{R}^2)$ into $L^q(\mathbb{R}^2)$, with $1 > s > (2 - 2 \alpha)$ and $q = 2/(1-s)$. Hence, $c_\alpha \to \infty$ as $\alpha \to \frac{1}{2}^+$.

Given the setting above, we can rewrite system \eqref{classicalNSE} as the following equivalent infinite-dimensional dynamical system:
\be\label{eqNSE}
  \frac{\rd \bu}{\rd t} + \nu A \bu + B(\bu,\bu) = \f,
\ee
where we abuse notation and denote $P_\sigma \f$ simply by $\f$.

It is well-known that, given any $\bu_0 \in H$, there exists a unique solution $\bu$ of \eqref{eqNSE} on $(0, \infty)$ satisfying $\bu(0) = \bu_0$ and
\[
 \bu \in \mC([0, \infty); H) \cap L^2_{\textnormal{loc}}(0, \infty; V), \quad \frac{\rd \bu}{\rd t} \in L^2_{\textnormal{loc}}(0, \infty; V')
\]
(see, e.g., \cite{ConstantinFoiasbook,Temambook1995, TemamDSbook1997, Temambook2001}). Such $\bu$ is called a \emph{weak solution} of \eqref{eqNSE}, and is denoted from now on simply as a solution of \eqref{eqNSE}.

We now recall some uniform bounds satisfied by any solution of \eqref{eqNSE} when complemented with an initial condition $\bu(0) = \bu_0 \in H$. The inequalities in \eqref{boundsu}, below, are classical and can be found in, e.g., \cite{ConstantinFoiasbook,Temambook1995, TemamDSbook1997, Temambook2001}; while the inequality in \eqref{boundderu}, below, was proved in \cite[Appendix]{FoiasManleyTemam1988}.

First, we recall the definition of the Grashof number, given by
\be\label{defGrashof}
 G = \frac{|\f|}{\nu^2 \lambda_1},
\ee
a dimensionless quantity. Recall that when $G$ is small enough equation \eqref{eqNSE} has a unique globally stable  steady state solution and therefore the dynamics becomes trivial. Throughout this paper we assume that $G$ is large enough, in particular that $G \geq 1$, to avoid such triviality.

\begin{prop}\label{propboundsu}
 Let $\bu_0 \in H$ and let $\bu$ be the unique solution of \eqref{eqNSE} on $[0,\infty)$ satisfying $\bu(0) = \bu_0$. Then, there exists $T_0 = T_0(\nu,\lambda_1,G,|\bu_0|)$ such that
 \be\label{boundsu}
	\left|\bu(t)\right|\leq M_0,\quad \left\|\bu(t)\right\| \leq M_1 \quad \forall t \geq T_0
 \ee
 and
 \be\label{boundderu}
  \left\|\frac{\rd \bu}{\rd t}(t) \right\| \leq R_1 \quad \forall t \geq T_0,
 \ee
 where
\be\label{defM0R1}
  M_0 := 2\nu G, \quad  R_1:=c_4\frac{M_1^3 \Lambda}{\nu}, \quad \Lambda:= 1 + \log\left( \frac{M_1}{\nu \lambda_1^{1/2}} \right),
\ee
and, in the case of periodic boundary conditions,
\be\label{PBCbd}
  M_1 = \nu \lambda_1^{1/2} G,
\ee
while in the case of no-slip Dirichlet boundary conditions,
\be\label{Hbd}
  M_1 = c_5 \nu \lambda_1^{1/2} G \Exp^{\frac{G^4}{2}},
\ee
for absolute constants $c_4$ and $c_5$, with $c_5 \geq 1$.
\end{prop}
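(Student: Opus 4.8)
The plan is to establish the three estimates by classical energy and enstrophy methods, in the order $L^2$, then $H^1$ (distinguishing the periodic and Dirichlet cases), and finally the bound on $\rd\bu/\rd t$; in each case the existence of $T_0$ will come from the decay of the initial transient together with a (uniform) Gr\"onwall argument. For the $L^2$ bound I would first take the $H$-inner product of \eqref{eqNSE} with $\bu$. The nonlinear term vanishes by the orthogonality \eqref{propBorthog2}, leaving $\tfrac12\tfrac{\rd}{\rd t}|\bu|^2 + \nu\|\bu\|^2 = (\f,\bu)$. Estimating the right-hand side by $|\f|\,|\bu|$ and applying Young's and Poincar\'e's \eqref{Poincare} inequalities to absorb half of the dissipation gives $\tfrac{\rd}{\rd t}|\bu|^2 + \nu\lambda_1|\bu|^2 \le |\f|^2/(\nu\lambda_1)$. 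A standard Gr\"onwall estimate then yields $\limsup_{t\to\infty}|\bu(t)|^2 \le |\f|^2/(\nu^2\lambda_1^2) = \nu^2 G^2$ by the definition \eqref{defGrashof} of $G$. Since the transient $|\bu_0|^2 e^{-\nu\lambda_1 t}$ decays, there is $T_0 = T_0(\nu,\lambda_1,G,|\bu_0|)$ after which $|\bu(t)| \le 2\nu G = M_0$, the factor $2$ providing exactly the margin that absorbs the transient.

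For the $H^1$ bound I would take the inner product of \eqref{eqNSE} with $A\bu$, obtaining $\tfrac12\tfrac{\rd}{\rd t}\|\bu\|^2 + \nu|A\bu|^2 + \langle B(\bu,\bu),A\bu\rangle = (\f,A\bu)$. In the periodic case the decisive simplification is the well-known two-dimensional identity $\langle B(\bu,\bu),A\bu\rangle = 0$; proceeding exactly as above (Young on $(\f,A\bu)$ and Poincar\'e in the form $|A\bu|^2 \ge \lambda_1\|\bu\|^2$) gives $\tfrac{\rd}{\rd t}\|\bu\|^2 + \nu\lambda_1\|\bu\|^2 \le |\f|^2/\nu$, and Gr\"onwall produces $\limsup_{t\to\infty}\|\bu\|^2 \le \nu^2\lambda_1 G^2$; enlarging $T_0$ so that the solution is absorbed into the ball of this radius yields $\|\bu(t)\| \le M_1 = \nu\lambda_1^{1/2}G$ for $t \ge T_0$, which is \eqref{PBCbd}.

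The no-slip case is the main obstacle, since there the cancellation fails. Here I would estimate the trilinear term by \eqref{estnonlineartermL4L4L2} with $\bu_1 = \bu_2 = \bu$ and $\bu_3 = A\bu$, namely $|\langle B(\bu,\bu),A\bu\rangle| \le c_2|\bu|^{1/2}\|\bu\|\,|A\bu|^{3/2}$, and use Young's inequality to absorb $|A\bu|^{3/2}$ into $\tfrac{\nu}{2}|A\bu|^2$ at the cost of a term of size $c|\bu|^2\|\bu\|^4/\nu^3$. Writing $y = \|\bu\|^2$, this produces a differential inequality $\tfrac{\rd y}{\rd t} \le a(t)\,y + |\f|^2/\nu$ with coefficient $a(t) = c|\bu|^2\|\bu\|^2/\nu^3$, which is not pointwise small, so a naive Gr\"onwall is useless. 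The right tool is the uniform Gr\"onwall lemma, which requires bounding $\int_t^{t+r} a\,\rd s$; here the already-proved bound $|\bu| \le M_0$ and the time-averaged enstrophy bound $\int_t^{t+r}\|\bu\|^2\,\rd s \le M_0^2/\nu + |\f|^2 r/(\nu^2\lambda_1)$ (obtained by integrating the energy inequality of the first step) combine, over a window $r$ of order $(\nu\lambda_1)^{-1}$, to give $\int_t^{t+r} a\,\rd s = O(G^4)$. The uniform Gr\"onwall lemma then generates an amplification factor $e^{O(G^4)}$; taking square roots and absorbing the polynomial prefactors into the absolute constant $c_5 \ge 1$ gives precisely the exponential $e^{G^4/2}$ in \eqref{Hbd} and the bound $\|\bu(t)\| \le M_1$ for $t \ge T_0$.

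Finally, for \eqref{boundderu} I would use that $\f$ is time-independent to differentiate \eqref{eqNSE} in time, yielding $\tfrac{\rd}{\rd t}\bu' + \nu A\bu' + B(\bu',\bu) + B(\bu,\bu') = 0$ with $\bu' := \rd\bu/\rd t$, take the inner product with $A\bu'$, and estimate the two nonlinear contributions by the logarithmic (Br\'ezis--Gallouet-type) inequality \eqref{ineqBG}; this is what produces the factor $\Lambda = 1 + \log(M_1/(\nu\lambda_1^{1/2}))$. Together with the bounds $|\bu| \le M_0$ and $\|\bu\| \le M_1$ already obtained and a further application of the uniform Gr\"onwall lemma, this gives $\|\bu'(t)\| \le R_1 = c_4 M_1^3\Lambda/\nu$ for $t \ge T_0$ (enlarging $T_0$ if necessary). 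These last two estimates are classical; the detailed bookkeeping for \eqref{boundderu} is carried out in \cite[Appendix]{FoiasManleyTemam1988}, which I would cite rather than reproduce. The essential difficulty throughout is the Dirichlet $H^1$ estimate, where the absence of the nonlinear cancellation forces the uniform Gr\"onwall argument and is responsible for the exponential dependence on $G$.
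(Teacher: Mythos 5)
The paper does not actually prove this proposition: it states that the bounds in \eqref{boundsu} are classical, citing \cite{ConstantinFoiasbook,Temambook1995,TemamDSbook1997,Temambook2001}, and that \eqref{boundderu} is proved in \cite[Appendix]{FoiasManleyTemam1988}. Your energy/enstrophy argument for \eqref{boundsu} --- $L^2$ estimate with Poincar\'e and Gr\"onwall for $M_0$, the vanishing of $\langle B(\bu,\bu),A\bu\rangle$ in the periodic case for \eqref{PBCbd}, and the uniform Gr\"onwall lemma with the time-averaged enstrophy bound in the Dirichlet case for \eqref{Hbd} --- is exactly the classical proof in those references, so that part matches the intended source. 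Two caveats. First, your remark that the exponent $G^4/2$ in \eqref{Hbd} is obtained by ``absorbing the polynomial prefactors into $c_5$'' is not quite right: the coefficient of $G^4$ in the exponent is fixed by the constants in the trilinear estimate and Young's inequality (your argument yields $\Exp^{cG^4}$ for some absolute $c$), and it cannot be traded against the multiplicative constant $c_5$; the value $1/2$ is simply the constant obtained in the cited references after tracking those constants. Second, and more substantively, your route to \eqref{boundderu} (differentiate \eqref{eqNSE} in time, test with $A\bu'$, use \eqref{ineqBG}, apply uniform Gr\"onwall) is genuinely different from the one in \cite[Appendix]{FoiasManleyTemam1988} and from the one this paper uses in its own Appendix for the analogous bound on $\rd\bv_N/\rd t$: there the bound comes from complex time-analyticity of the solution in a region of width $\rho$ with $\rho^{-1}\sim \nu\lambda_1\bigl(G+\tfrac{M_1^2}{\nu^2\lambda_1}\bigr)\bigl[1+\log\bigl(G+\tfrac{M_1^2}{\nu^2\lambda_1}\bigr)\bigr]\sim M_1^2\Lambda/\nu$, followed by Cauchy's integral formula, which gives $\|\rd\bu/\rd t\|\leq cM_1/\rho = cM_1^3\Lambda/\nu = R_1$ directly; this is where $\Lambda$ really enters. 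Your differentiation route can be closed, but note that after absorbing the logarithm via $\min_{x\geq 1}[x-\alpha(1+\log x)]\geq -\alpha\log\alpha$ the differentiated enstrophy inequality reads $\tfrac{\rd}{\rd t}\|\bu'\|^2\leq c\nu^{-1}M_1^2\Lambda\,\|\bu'\|^2$, i.e.\ it has a positive growth coefficient and no damping, so you cannot conclude without first producing a time-averaged bound on $\|\bu'\|^2$ over a window of length $r\sim\nu/(M_1^2\Lambda)$ and only then invoking the uniform Gr\"onwall lemma; since you defer the bookkeeping to the citation anyway, the analyticity argument is the cleaner (and, for this paper's purposes, the relevant) one, as it also delivers the higher-order bounds \eqref{boundtildevNDA} and \eqref{bounddertildevNDA} needed later.
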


\begin{rmk}
 Notice that, from the definitions of $G$ in \eqref{defGrashof} and $M_1$ in Proposition \ref{propboundsu}, it follows that
 \be\label{boundf}
   |\f| \leq \nu \lambda_1^{1/2} M_1.
 \ee
 This inequality will be used several times along this paper in order to express some estimates in terms of $M_1$, which in many cases is the dominating term.
\end{rmk}

\subsection{The feedback-control (nudging) data assimilation algorithm and its spectral Galerkin approximation}\label{subsecDA}

We consider system \eqref{classicalDA} equipped with the same boundary conditions as \eqref{classicalNSE}, be it periodic or no-slip Dirichlet. Within the setting introduced in subsection \ref{subsecNSE}, we can rewrite system \eqref{classicalDA} as the following equivalent infinite-dimensional dynamical system:
\be\label{eqDA}
\frac{\rd \bv}{\rd t} + \nu A\bv + B\left(\bv,\bv\right) = \f - \beta P_{\sigma} I_{h}\left(\bv-\bu\right).
\ee

We assume that the linear interpolant $I_h: (H^1(\Omega))^2\rightarrow (L^2(\Omega))^2$ satisfies the following approximation-of-identity-type property:
\be\label{propIh}
  \|\bphi-I_h(\bphi)\|_{(L^2(\Omega))^2} \leq c_0^{1/2} h \|\bphi\|_{(H^1(\Omega))^2} \quad \forall \bphi \in (H^1(\Omega))^2,
\ee
where $c_0 > 0$ is an absolute constant. Notice that for $\bphi\in V$, $\|\bphi\|$ is equivalent to $\|\bphi\|_{(H^1(\Omega))^2}$ and so in this case we abuse notation and simply use $\|\bphi\|$. Examples of such interpolation operators satisfying this property include: the low Fourier-modes projector $P_N$, for some $N \in \mathbb{Z^+}$ with $\lambda_1 N \leq 1/h^2$; and sum of local spatial averages over finite volume elements (see, e.g., \cite{FoiasTiti1991, JonesTiti1992, JonesTiti1993}).

In the following lemma, we list, for convenience, some technical inequalities involving the interpolation operator $I_h$ that are used several times throughout this paper. In particular, inequalities \eqref{displ2} and \eqref{disph1} are the key inequalities that provide the stabilizing mechanism missing from similar discrete in time numerical schemes as applied to \eqref{classicalNSE}.

\begin{lem}
Suppose $I_h$ satisfies \eqref{propIh} and let $\beta>0$ and $h>0$ such that
\be\label{lemprelimcondbetah}
c_0 \beta h^2 \leq \nu.
\ee
Then, the following inequalities hold:
\begin{enumerate}[(i)]
 \item For every $\bphi\in V$,
\be\label{displ2}
-2\beta\left(P_{\sigma}I_h\left(\bphi\right),\bphi\right)\leq\nu\left\|\bphi\right\|^2-\beta\left|\bphi\right|^2;
\ee
\item For every $\bphi\in\mathcal{D}(A)$,
\be\label{disph1}
-2\beta\left(P_{\sigma}I_h\left(\bphi\right),A\bphi\right)\leq\nu\left|A\bphi\right|^2-\beta\left\|\bphi\right\|^2;
\ee
\item For every $\bpsi\in V$, $\bphi\in H$ and $\alpha_0 > 0$,
\be\label{leftovl2}
2\beta\left(P_{\sigma}I_h\left(\bpsi\right),\bphi\right)\leq\frac{2\beta}{\alpha_0}\left|\bphi\right|^2+\alpha_0\beta\left|\bpsi\right|^2+\alpha_0\nu\left\|\bpsi\right\|^2;
\ee
\item For every $\bpsi\in V$, $\bphi\in \mathcal{D}(A)$, $\alpha_0 >0$ and $\alpha_1 > 0$,
\be\label{leftovh1}
2\beta\left(P_{\sigma}I_h\left(\bpsi\right),A\bphi\right)\leq\frac{\beta}{\alpha_0}\left\|\bphi\right\|^2+\frac{\nu}{\alpha_1}\left|A\bphi\right|^2+\beta\left(\alpha_0+\alpha_1\right)\left\|\bpsi\right\|^2.
\ee
\end{enumerate}
\end{lem}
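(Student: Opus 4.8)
The plan is to treat all four inequalities by one and the same mechanism. In each case the quantity to be estimated is a pairing of $P_{\sigma}I_h$ of a vector field against a test vector lying in $H$, and the whole point is to exploit that $P_{\sigma}$ is the self-adjoint orthogonal projection onto $H$. Writing $I_h(\bphi) = \bphi - (\bphi - I_h(\bphi))$ and using that the argument lies in $V\subset H$ (so $P_{\sigma}\bphi=\bphi$), together with the fact that each test vector ($\bphi$ or $A\bphi$) belongs to $H$, I would move $P_{\sigma}$ onto the test vector, isolate a ``main term'' coming from $\bphi$ itself, and relegate the interpolation error to a remainder controlled by the approximation property \eqref{propIh}. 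The constraint \eqref{lemprelimcondbetah} is then used to absorb that remainder.

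For \eqref{displ2} I would first note, since $\bphi\in V\subset H$ and $P_{\sigma}$ is self-adjoint, that $(P_{\sigma}I_h(\bphi),\bphi)=(I_h(\bphi),\bphi)=|\bphi|^2-(\bphi - I_h(\bphi),\bphi)$, whence $-2\beta(P_{\sigma}I_h(\bphi),\bphi)=-2\beta|\bphi|^2+2\beta(\bphi - I_h(\bphi),\bphi)$. The cross term is handled by Cauchy--Schwarz, \eqref{propIh}, and Young's inequality with weight $\nu$:
\[
2\beta(\bphi - I_h(\bphi),\bphi)\leq 2\beta c_0^{1/2}h\|\bphi\|\,|\bphi|\leq \nu\|\bphi\|^2+\frac{c_0\beta h^2}{\nu}\,\beta|\bphi|^2 .
\]
By \eqref{lemprelimcondbetah} the factor $c_0\beta h^2/\nu\leq 1$, so the last term is $\leq\beta|\bphi|^2$, which combines with $-2\beta|\bphi|^2$ to give \eqref{displ2}. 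Inequality \eqref{disph1} is identical in spirit, now using $A\bphi\in H$, the identity $(A\bphi,\bphi)=\|\bphi\|^2$ for $\bphi\in\mathcal{D}(A)$, and the same Young/constraint step with $|A\bphi|^2$ playing the role of $\|\bphi\|^2$.

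For \eqref{leftovl2} and \eqref{leftovh1} the argument $\bpsi$ and the test field differ, so there is no exact cancellation and I would split $P_{\sigma}I_h(\bpsi)=\bpsi-P_{\sigma}(\bpsi-I_h(\bpsi))$, bounding $|P_{\sigma}(\bpsi-I_h(\bpsi))|\leq|\bpsi-I_h(\bpsi)|\leq c_0^{1/2}h\|\bpsi\|$. Each resulting pairing is estimated by Young's inequality with a free parameter: for \eqref{leftovl2} the ``clean'' part $2\beta(\bpsi,\bphi)$ yields $\tfrac{\beta}{\alpha_0}|\bphi|^2+\alpha_0\beta|\bpsi|^2$, while the error part yields $\tfrac{\beta}{\alpha_0}|\bphi|^2+\alpha_0 c_0\beta h^2\|\bpsi\|^2$, and \eqref{lemprelimcondbetah} turns $\alpha_0 c_0\beta h^2\leq\alpha_0\nu$, producing exactly the three advertised terms. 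For \eqref{leftovh1} the same split is applied against $A\bphi$, the clean part handled via $(\bpsi,A\bphi)=(\!(\bpsi,\bphi)\!)\leq\|\bpsi\|\|\bphi\|$ with weight $\alpha_0$, and the error part with weight $\alpha_1$ so that $\tfrac{\nu}{\alpha_1}|A\bphi|^2$ appears and $\tfrac{\alpha_1}{\nu}\beta^2 c_0 h^2\leq\alpha_1\beta$ by \eqref{lemprelimcondbetah}.

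I do not expect a genuine obstacle; the content is bookkeeping. The one point requiring care — and the reason the lemma is isolated at all — is the precise calibration of the Young weights so that the leftover coefficient always takes the form $(c_0\beta h^2/\nu)\cdot(\text{something})$, which \eqref{lemprelimcondbetah} collapses to the stated constants. This is exactly the mechanism converting the a priori \emph{sign-indefinite} nudging pairing into a sign-definite gain of $-\beta|\bphi|^2$ in \eqref{displ2} and $-\beta\|\bphi\|^2$ in \eqref{disph1}, which is what later supplies the stabilization exploited throughout the paper.
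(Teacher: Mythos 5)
Your proposal is correct and follows essentially the same route as the paper: decompose $I_h(\cdot)$ as the identity plus the interpolation error, bound the error term via Cauchy--Schwarz, \eqref{propIh}, and Young's inequality with a free weight, and absorb the leftover coefficient using \eqref{lemprelimcondbetah}. The only cosmetic difference is that you discard $P_\sigma$ at the outset by self-adjointness, whereas the paper keeps it and uses $|P_\sigma \bw|\leq|\bw|$; the calculations are otherwise identical.
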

\begin{proof}
By Cauchy-Schwarz inequality and property \eqref{propIh} of $I_h$, we obtain that
\[
2\beta\left(P_{\sigma}\left(\bpsi-I_h\left(\bpsi\right)\right),\bphi\right)\leq2\beta\left|P_{\sigma}\left(\bpsi-I_h\left(\bpsi\right)\right)\right|\left|\bphi\right|\leq 2\beta c_0^{1/2} h\left\|\bpsi\right\|\left|\bphi\right|.
\]
Now, applying Young's inequality and using hypothesis \eqref{lemprelimcondbetah}, yields
\be\label{comp00}
2\beta\left(P_{\sigma}\left(\bpsi-I_h\left(\bpsi\right)\right),\bphi\right)\leq\alpha_0\nu\left\|\bpsi\right\|^2+\frac{\beta}{\alpha_0}\left|\bphi\right|^2 \quad \forall \alpha_0 > 0.
\ee
Similarly, one can show that
\be\label{comp01}
2\beta\left(P_{\sigma}\left(\bpsi-I_h\left(\bpsi\right)\right),A\bphi\right)\leq\alpha_1\beta\left\|\bpsi\right\|^2+\frac{\nu}{\alpha_1}\left|A\bphi\right|^2 \quad \forall \alpha_1 > 0.
\ee
Notice that
\begin{multline}\label{comp01a}
-2\beta\left(P_{\sigma}I_h\left(\bphi\right),\bphi\right)=-2\beta\left(P_{\sigma}\left(\bphi-\bphi+I_h\left(\bphi\right)\right),\bphi\right)\\
= 2\beta\left(P_{\sigma}\left(\bphi-I_h\left(\bphi\right)\right),\bphi\right)-2\beta\left|\bphi\right|^2.
\end{multline}
Thus, \eqref{displ2} follows from \eqref{comp01a} by using \eqref{comp00} with $\bpsi=\bphi$ and $\alpha_0=1$. In order to prove \eqref{leftovl2}, we write
\begin{multline}
2\beta\left(P_{\sigma}I_h\left(\bpsi\right),\bphi\right)=2\beta\left(P_{\sigma}\left(\bpsi-\bpsi+I_h\left(\bpsi\right)\right),\bphi\right)\\
= 2\beta\left(P_{\sigma}\left(\bpsi-I_h\left(\bpsi\right)\right),\bphi\right)+2\beta\left(\bpsi,\bphi\right),
\end{multline}
from which \eqref{leftovl2} follows by using \eqref{comp00} along with Cauchy-Schwarz and Young's inequalities on the last term. Inequalities \eqref{disph1} and \eqref{leftovh1} follow by similar arguments, but using \eqref{comp01} instead of \eqref{comp00}.
\end{proof}

Now, we consider a spectral Galerkin approximation of the solution of \eqref{eqDA}, given by a function $\bv_N: [0, \infty) \to P_N H$ satisfying the following finite-dimensional system of ordinary differential equations:
\be\label{eqDAGalerkin}
   \frac{\rd \bv_N}{\rd t} + \nu A\bv_N + P_N B\left(\bv_N,\bv_N\right) = P_N \f - \beta P_{\sigma} I_h \left(\bv_N - \bu\right),
\ee

In the following proposition, we present some uniform bounds satisfied by the Galerkin approximation $\bv_N$ and its temporal derivative. For this purpose, we need to assume that $\bu$ is a solution of \eqref{eqNSE} on $[0, \infty)$ such that the uniform bound with respect to the norm in $V$ from \eqref{boundsu} is valid for every $t \geq 0$.
The proof is given in the Appendix.

From now on, we denote by $c$ a positive absolute constant that does not depend on any physical parameter and whose value may change from line to line.

\begin{prop}\label{propboundtimederv}
 Let $\bu$ be a solution of \eqref{eqNSE} on $[0,\infty)$ such that $\|\bu(t)\| \leq M_1$ for every $t \geq 0$. Let $\bv_0 \in B_V(M_1)$ and let $\bv_N$ be the solution of \eqref{eqDAGalerkin} on $[0,\infty)$ corresponding to $I_h(\bu)$ and satisfying $\bv_N(0) = P_N\bv_0$, for an arbitrarily fixed $N\in\mathbb{Z^+}$. Assume that $\beta > 0$ and $h > 0$ satisfy:
 \be\label{propvcondbeta}
  \beta\geq \frac{cM_1^2}{\nu}\left[ 1 + \log \left(\frac{M_1}{\nu \lambda_1^{1/2}} \right) \right]
 \ee
 and
 \be\label{propvcondbetah}
 c_0\beta h^2\leq \nu.
 \ee
 Then, there exists $T_1 = T_1(\nu,\lambda_1,G)$ such that the following bounds hold for every $t \geq T_1$ and $N\in\mathbb{Z^+}$:
 \be\label{boundsv}
    \|\bv_N(t) \| \leq 8 M_1, \quad \left\|\frac{\rd \bv_N}{\rd t}(t) \right\| \leq c_7 R_1, \quad
     |A\bv_N (t)|\leq c_8 M_2, \quad \left|A \frac{\rd \bv_N}{\rd t}(t) \right| \leq c_9 R_2,
 \ee
 where $\{c_j\}_{j=7}^{9}$ are positive absolute constants, $M_1$ and $R_1$ are as given in Proposition \ref{propboundsu}, and
 \be\label{defM2R2}
   M_2 := \frac{M_1}{\nu^{1/2}} \left( \frac{M_1 \Lambda^{1/2}}{\nu^{1/2}} + \beta^{1/2} \right), \quad R_2:= \frac{M_1^3 \Lambda}{\nu^{3/2}} \left( \frac{M_1 \Lambda^{1/2}}{\nu^{1/2}} + \beta^{1/2} \right),
 \ee
 with $\Lambda$ as defined in \eqref{defM0R1}.
\end{prop}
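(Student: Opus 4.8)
The plan is to obtain the four bounds in \eqref{boundsv} through a hierarchy of energy estimates, deriving in each case a differential inequality of the form $\frac{\rd}{\rd t}y + \delta y \le K$ in which the damping rate $\delta$ and the constant $K$ depend only on $\nu$, $\lambda_1$ and $G$ (through $M_1$, $\Lambda$ and $\beta$) and not on $N$. Gronwall's lemma then drives each quantity $y$ into an absorbing ball after a time $T_1 = T_1(\nu,\lambda_1,G)$, uniformly in $N$. The point of working with \eqref{eqDAGalerkin} rather than \eqref{eqNSE} is that the damping $\delta$ is produced directly by the feedback term via \eqref{displ2}--\eqref{disph1}, so the absorbing balls are reached unconditionally and no discrete or uniform Gronwall inequality is needed. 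Throughout I would freely use $\|\bu\| \le M_1$ and $\|\frac{\rd \bu}{\rd t}\| \le R_1$ from \eqref{boundsu}--\eqref{boundderu}, together with the higher-order bounds on $\bu$ furnished by the time-analyticity of the reference solution.

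First I would establish the $V$- and $\mD(A)$-bounds on $\bv_N$ simultaneously, since they are coupled through the logarithm. Taking the $H$ inner product of \eqref{eqDAGalerkin} with $A\bv_N$ gives
\[
\tfrac12\tfrac{\rd}{\rd t}\|\bv_N\|^2 + \nu|A\bv_N|^2 = -\langle B(\bv_N,\bv_N),A\bv_N\rangle + (\f,A\bv_N) - \beta(P_\sigma I_h(\bv_N-\bu),A\bv_N),
\]
and I would bound the nonlinear term by \eqref{ineqBG}, the forcing by Cauchy--Schwarz and \eqref{boundf}, and split the feedback term into its $\bv_N$-part, treated by \eqref{disph1} to extract the damping $-\beta\|\bv_N\|^2$, and its $\bu$-part, treated by \eqref{leftovh1}. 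After absorbing the $|A\bv_N|^2$ contributions, the surviving nonlinear term has the form $\frac{c}{\nu}\|\bv_N\|^4[1+\log(|A\bv_N|/(\lambda_1^{1/2}\|\bv_N\|))]$. The key is that, on the ball where $|A\bv_N| \le c_8 M_2$, the logarithm is of order $\Lambda$, so this term is dominated by $\tfrac{\beta}{2}\|\bv_N\|^2$ precisely because of the large-$\beta$ hypothesis \eqref{propvcondbeta}, whose $\Lambda$-factor is tailored to this purpose; a continuity argument from $\|\bv_N(0)\| \le \|\bv_0\| \le M_1$ then yields $\|\bv_N\| \le 8M_1$. Running in parallel the $\mD(A)$-estimate (pairing \eqref{eqDAGalerkin} with $A^2\bv_N$, using $\f = \frac{\rd\bu}{\rd t} + \nu A\bu + B(\bu,\bu)$ to give meaning to the forcing term since $\f \in H$ only, and extracting the damping $-\beta|A\bv_N|^2$ from the feedback) closes the bootstrap and produces $|A\bv_N| \le c_8 M_2$ with $M_2$ as in \eqref{defM2R2}: the $\beta^{1/2}$ contribution comes from the $\bu$-part of the feedback and the $\Lambda^{1/2}$ contribution from the term $\frac{\rd\bu}{\rd t}$.

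For the time-derivative bounds I would differentiate \eqref{eqDAGalerkin} in $t$ and set $\bw_N := \frac{\rd \bv_N}{\rd t}$, so that
\[
\tfrac{\rd \bw_N}{\rd t} + \nu A\bw_N + P_N B(\bw_N,\bv_N) + P_N B(\bv_N,\bw_N) = -\beta P_\sigma I_h\big(\bw_N - \tfrac{\rd \bu}{\rd t}\big).
\]
Here it is essential that $\f$ is time-independent, so that no forcing survives and the only source is $\beta P_\sigma I_h(\frac{\rd \bu}{\rd t})$, controlled by \eqref{boundderu}. Pairing with $A\bw_N$, bounding the two nonlinear terms by \eqref{ineqBG} and \eqref{ineqTiti2} with the help of the bounds $\|\bv_N\|\le 8M_1$ and $|A\bv_N|\le c_8M_2$ already in hand (the latter keeping the relevant logarithm of order $\Lambda$), and again extracting $-\beta\|\bw_N\|^2$ from \eqref{disph1}, gives the absorbing-ball estimate $\|\bw_N\| \le c_7 R_1$. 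The corresponding $\mD(A)$-estimate for $\bw_N$, obtained by pairing with $A^2\bw_N$, then yields $|A\bw_N| \le c_9 R_2$. The time $T_1$ is needed for the last three bounds because $\bv_0 \in B_V(M_1)$ gives no control whatsoever on $|A\bv_N(0)|$ or on the initial time-derivatives.

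I expect two points to require the most care. The first, and the genuine conceptual obstacle, is the logarithm in the two-dimensional nonlinear estimate \eqref{ineqBG}: it forces the $V$- and $\mD(A)$-estimates to be run as a coupled bootstrap and is exactly what dictates the form of condition \eqref{propvcondbeta}. The second is the bookkeeping at the $\mD(A)$ level: because $\f$ lies only in $H$ and $I_h$ maps into $(L^2(\Omega))^2$, the highest-order estimates cannot be closed naively, and one must substitute the equation \eqref{eqNSE} for the forcing, invoke the higher regularity of $\bu$, and use the constraint \eqref{propvcondbetah} (which turns $\beta c_0^{1/2}h$ into $(\nu\beta)^{1/2}$) to handle the interpolation-error contribution; verifying that this produces the sharp constants $M_2$ and $R_2$ in \eqref{defM2R2}, rather than cruder ones, is the main technical burden.
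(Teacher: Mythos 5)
Your plan for the $V$-bound $\|\bv_N\|\le 8M_1$ (pairing with $A\bv_N$, extracting damping from \eqref{disph1}, controlling the Br\'ezis--Gallouet logarithm via \eqref{ineqminlog} and the largeness condition \eqref{propvcondbeta}, closing by continuity from $\|\bv_0\|\le M_1$) matches the paper's computation essentially line by line. But the paper does \emph{not} obtain the remaining three bounds the way you propose. It complexifies time: $\widetilde{\bv_N}$ is shown to be analytic and bounded in $V$ on a complex region $\mB_1$ of width $\rho$ \emph{independent of $N$} (the $V$-estimate above is run along rays $t_0+s\Exp^{i\theta}$, $|\theta|\le\pi/4$); the bound $|A\bv_N|\le c M_2$ is then extracted from the \emph{time-integrated} dissipation $\int|A\widetilde{\bv_N}|^2\,\rd s\le c[1+\rho(\beta+\nu\lambda_1)]M_1^2/\nu$ (which comes for free from the $V$-level inequality) via the mean-value property of the analytic function $A\widetilde{\bv_N}$ over complex discs of radius $\sim\rho$; and both derivative bounds $\|\rd\bv_N/\rd t\|\le c_7R_1$, $|A\,\rd\bv_N/\rd t|\le c_9R_2$ follow in one line from Cauchy's integral formula, with $R_1\sim M_1/\rho$ and $R_2\sim M_2/\rho$. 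The $\Lambda^{1/2}$ in $M_2$ is the reciprocal square root of the analyticity radius, not a contribution of $\rd\bu/\rd t$.

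The reason the paper takes this detour is that the direct route you propose has a genuine gap at the $\mD(A)$ level, and acknowledging it as ``bookkeeping'' understates the problem. Pairing \eqref{eqDAGalerkin} with $A^2\bv_N$, the feedback term $-\beta(P_\sigma I_h(\bv_N-\bu),A^2\bv_N)$ cannot be handled: $I_h$ maps only into $(L^2(\Omega))^2$, assumption \eqref{propIh} gives no control of $\bphi-I_h\bphi$ in $V$, so there is no analogue of \eqref{disph1} at this level --- you can neither extract damping $-\beta|A\bv_N|^2$ nor even bound the term without invoking $|A^2\bv_N|$ or $|A^{3/2}\bv_N|$, which are not uniformly controlled in $N$. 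The forcing has the same defect: substituting $\f=\rd\bu/\rd t+\nu A\bu+B(\bu,\bu)$ still leaves $\nu(A\bu,A^2\bv_N)=\nu(A^{3/2}\bu,A^{3/2}\bv_N)$, requiring $\mD(A^{3/2})$ regularity of $\bu$ that is not available from $\f\in H$. The same obstruction recurs when you pair the differentiated equation with $A^2\bw_N$. A further, independent issue is that $\bv_0$ is only in $B_V(M_1)$, so the ``initial data'' for $|A\bv_N|$ and for $\bw_N=\rd\bv_N/\rd t$ are finite only because the system is finite-dimensional, and they grow with $N$; an absorbing-ball argument started from them yields an entry time $T_1$ depending on $N$ unless it is mediated by $N$-independent integrated bounds (which is exactly the role the mean-value/Cauchy-formula step plays in the paper). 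As written, your scheme does not produce the bounds \eqref{boundsv} for the last three quantities.
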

\begin{rmk}
In practice, one would like to chose $\beta$ as small as possible, so that the spatial resolution $h$ can be as large as possible. Therefore, replacing $\beta$ by its lower bound, we conclude that the terms inside the parenthesis of \eqref{defM2R2} are of the same order.	
\end{rmk}

\subsection{A Postprocessing of the Galerkin method}\label{subsecPPGM}

In this subsection, we recall a type of postprocessing as applied to the Galerkin approximation given in \eqref{eqDAGalerkin}. The purpose is to obtain a better approximation of $\bv$ than the one given by the Galerkin method. The idea consists in adding to the Galerkin approximation $\bv_N \in P_N H$ an extra term lying in the complement space $(I - P_N)H =: Q_N H$. This extra term represents an approximation of $Q_N \bv \in Q_N H$.

In \cite{MondainiTiti2018}, following ideas from \cite{GarciaArchillaNovoTiti1998,GarciaArchillaNovoTiti1999}, this is done by using the concept of an approximate inertial manifold, particularly the one introduced in \cite{FoiasManleyTemam1988}. In order to obtain an approximation of $\bv$ at a certain time $T > 0$:
\begin{enumerate}[(i)]
 \item\label{PPGMi} Integrate \eqref{eqDAGalerkin} in time, over the time interval $[0,T]$, to obtain $\bv_N$ and compute $\bv_N(T)$;
 \item\label{PPGMii} Obtain $\bq_N$ satisfying $\nu A \bq_N = Q_N[\f - B(\bv_N(T),\bv_N(T))]$;
 \item\label{PPGMiii} Compute the new approximation to $\bv(T)$, and hence to $\bu(T)$, given by $\bv_N(T) + \bq_N$.
\end{enumerate}

The definition of $\bq_N \in Q_N H$ in item \eqref{PPGMii} is inspired by a construction given in \cite{FoiasManleyTemam1988}, where an approximation of $Q_N \bu$, with $\bu$ being a solution of the 2D NSE, is given by
\[
  Q_N \bu \approx \Phi_1(P_N \bu) := (\nu A)^{-1} Q_N [\f - B(P_N \bu, P_N \bu) ].
\]
The graph of the mapping $\Phi_1: P_N H \to Q_N H$ is called an approximate inertial manifold. Its expression is obtained by applying $Q_N$ to the 2D NSE and discarding lower-order terms, namely the time derivative of $Q_N \bu$ and all the nonlinear terms involving $Q_N \bu$.

A key point in the algorithm \eqref{PPGMi}-\eqref{PPGMiii} is the fact that the approximation of $Q_N \bv$, i.e., $\bq_N \in Q_N H$, is only computed at the final time $T$. This is one of the reasons why the Postprocessing Galerkin method is more computationally efficient than other approximation methods, e.g., the Nonlinear Galerkin method (see, e.g., \cite{DevulderMarionTiti1993, FJKT1988, GrahamSteenTiti1993, JollyKevrekidisTiti1990, MargolinTitiWynne2003, MarionTemam1989}).

In \cite{MondainiTiti2018}, it was proved that the error estimate between an approximation of $\bv$ given by the Postprocessing Galerkin method, i.e. $\bv_N + \Phi_1(\bv_N)$, and a true reference solution $\bu$ of \eqref{eqNSE} is better than the one obtained by using the Galerkin method alone. The result concerning this error estimate is recalled in Theorem \ref{thmerrorPPGM}, below. The proof uses estimates of $Q_N \bu$ with respect to the norms in $H$ and $V$, as well as some properties of the mapping $\Phi_1$. These are recalled in the following two propositions that are proven in \cite{FoiasManleyTemam1988} (see also \cite{Titi1990}).

\begin{prop}\label{propq}
Let $\bu_0 \in H$ and let $\bu$ be a solution of \eqref{eqNSE} on $[0, \infty)$ satisfying $\bu(0) = \bu_0$. Then, there exists $T_{0,1} = T_{0,1}(\nu, \lambda_1,|\f|,|\bu_0|) \geq 0$ such that
 \be\label{boundqL2}
|Q_N \bu (t)|\leq C_0 \frac{L_N}{\lambda_{N+1}}, \quad \forall t \geq T_{0,1}, \quad \forall N \in \mathbb{Z^+},
\ee
\be\label{boundqH1}
\|Q_N \bu (t)\| \leq C_1 \frac{L_N}{\lambda_{N+1}^{1/2}}, \quad \forall t \geq T_{0,1}, \quad \forall N \in \mathbb{Z^+},
\ee
where
\be\label{defLN}
L_N = \left[ 1 + \log\left( \frac{\lambda_N}{\lambda_1}\right) \right]^{1/2},
\ee
\be\label{defC0}
C_0 = c\left( \frac{|Q_N \f| + M_1^2}{\nu} \right),
\ee
\be\label{defC1}
C_1 = c \left( \frac{|Q_N \f| + M_1^2}{\nu} + \frac{M_0 M_1^2}{\nu^2} \right),
\ee
and $M_0$ and $M_1$ are as given in Proposition \ref{propboundsu}.
\end{prop}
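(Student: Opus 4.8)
The plan is to reconstruct the computation of \cite{FoiasManleyTemam1988,Titi1990} by deriving the evolution equation for the high-mode component $\bq := Q_N\bu$ and extracting the two bounds from it via energy-type estimates, the conceptual heart being the generation of the logarithmic factor $L_N$ out of inequality \eqref{ineqTiti1}. Applying $Q_N$ to \eqref{eqNSE} and using that $A$ and $Q_N$ commute gives
\[ \frac{\rd\bq}{\rd t} + \nu A\bq + Q_N B(\bu,\bu) = Q_N\f. \]
I would work for $t \ge T_0$, where $T_0$ is as in Proposition \ref{propboundsu}, so that $|\bu(t)| \le M_0$ and $\|\bu(t)\| \le M_1$; choosing $T_{0,1} \ge T_0$ large enough then absorbs the transient coming from $|\bq(T_0)| \le M_0$. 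The structural fact I exploit throughout is that, since $\bq \in Q_NH$, the Poincar\'e-type bound $\lambda_{N+1}^{1/2}|\bq| \le \|\bq\|$ holds.

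For \eqref{boundqL2} I would take the $H$-inner product of the $\bq$-equation with $\bq$, obtaining
\[ \frac12\frac{\rd}{\rd t}|\bq|^2 + \nu\|\bq\|^2 = (Q_N\f,\bq) - \langle B(\bu,\bu),\bq\rangle, \]
and bound the nonlinear term by \eqref{ineqTiti1} with $\bu_1=\bu_2=\bu$ and $\bu_3=\bq$. The key move is to convert the factor $|\bq|[1+\log(\|\bq\|/(\lambda_1^{1/2}|\bq|))]^{1/2}$ into $L_N$: writing $r = \|\bq\|/|\bq| \ge \lambda_{N+1}^{1/2}$, the map $r \mapsto r^{-1}[1+\log(r/\lambda_1^{1/2})]^{1/2}$ is decreasing for $r \ge \lambda_1^{1/2}$, so the factor is largest at the smallest admissible ratio $r = \lambda_{N+1}^{1/2}$, giving
\[ |\bq|\Big[1+\log\frac{\|\bq\|}{\lambda_1^{1/2}|\bq|}\Big]^{1/2} \le \frac{\|\bq\|}{\lambda_{N+1}^{1/2}}L_N. \]
Hence $|\langle B(\bu,\bu),\bq\rangle| \le c\,M_1^2\lambda_{N+1}^{-1/2}L_N\|\bq\|$. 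Estimating $(Q_N\f,\bq) \le |Q_N\f|\lambda_{N+1}^{-1/2}\|\bq\|$, absorbing both $\|\bq\|$-terms into $\nu\|\bq\|^2$ by Young's inequality, and using $\|\bq\|^2 \ge \lambda_{N+1}|\bq|^2$ yields
\[ \frac{\rd}{\rd t}|\bq|^2 + \nu\lambda_{N+1}|\bq|^2 \le \frac{c(|Q_N\f|^2 + M_1^4 L_N^2)}{\nu\lambda_{N+1}}, \]
and a Gronwall argument, uniform in time, delivers \eqref{boundqL2} with $C_0$ as in \eqref{defC0}; note that no time-derivative correction enters here, which is why $C_0$ has no second term.

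For \eqref{boundqH1} I would deliberately avoid pairing with $A\bq$, since that route forces a logarithm involving $|A\bu|$, an $N$-independent quantity that cannot produce $L_N$. Instead I rewrite the energy identity as the elliptic relation
\[ \nu\|\bq\|^2 = (Q_N\f,\bq) - \langle B(\bu,\bu),\bq\rangle - \Big(\frac{\rd\bq}{\rd t},\bq\Big), \]
treat the first two terms exactly as above (the monotonicity bound again supplying $L_N$ and the $M_1^2/\nu$ contribution), and control the last term by $|\rd\bq/\rd t|\,|\bq| \le |\rd\bu/\rd t|\,\lambda_{N+1}^{-1/2}\|\bq\|$. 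Dividing by $\|\bq\|$ leaves
\[ \nu\|\bq\| \le \lambda_{N+1}^{-1/2}\big(|Q_N\f| + c\,M_1^2 L_N + |\rd\bu/\rd t|\big), \]
and inserting the a priori bound on $|\rd\bu/\rd t|$ for the 2D NSE — of the type established in \eqref{boundderu} and its $L^2$ analogue, which is $\le c\,M_0M_1^2/\nu$ — produces the extra term $M_0M_1^2/\nu^2$ in $C_1$, matching \eqref{defC1}. The main obstacle, and the step deserving the most care, is precisely this passage from \eqref{ineqTiti1} to the factor $L_N$: one must argue the monotonicity cleanly and confirm that the emerging logarithm is $\log(\lambda_{N+1}/\lambda_1)$ rather than a quantity involving $|A\bu|$. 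Every remaining step is a routine combination of Young's inequality, the Poincar\'e bound on $Q_NH$, and the uniform-in-time bounds from Proposition \ref{propboundsu}.
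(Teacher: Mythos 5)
The paper itself offers no proof of Proposition \ref{propq}: it is recalled verbatim from \cite{FoiasManleyTemam1988} (see also \cite{Titi1990}). Your reconstruction follows the method of that reference, and its skeleton is correct: project \eqref{eqNSE} onto $Q_NH$, manufacture the factor $L_N/\lambda_{N+1}^{1/2}$ by combining the logarithmic inequality \eqref{ineqTiti1} with the improved Poincar\'e inequality $\|\bq\|\geq\lambda_{N+1}^{1/2}|\bq|$ on $Q_NH$, then run Gronwall for \eqref{boundqL2} and use the rearranged energy identity together with a bound on $\rd\bu/\rd t$ for \eqref{boundqH1}. The step you single out as the heart of the argument is indeed sound: for $r\geq\lambda_1^{1/2}$ the derivative of $r\mapsto r^{-1}[1+\log(r/\lambda_1^{1/2})]^{1/2}$ has the sign of $-\tfrac12-\log(r/\lambda_1^{1/2})<0$, so evaluating at $r=\lambda_{N+1}^{1/2}$ is legitimate. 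Two small points you should make explicit: the evaluation produces $[1+\tfrac12\log(\lambda_{N+1}/\lambda_1)]^{1/2}$, so matching the stated $L_N$ (which carries $\lambda_N$) uses $\lambda_{N+1}\leq c\,\lambda_N$, available in 2D from the Weyl asymptotics of the Stokes spectrum; and in the Gronwall step the transient $|\bq(T_0)|^2\Exp^{-\nu\lambda_{N+1}(t-T_0)}$ is absorbed by a $T_{0,1}$ \emph{independent of $N$} precisely because the decay rate grows like $\lambda_{N+1}$ while the target only shrinks like $\lambda_{N+1}^{-2}$.

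The one genuine gap is the inequality $|\rd\bu/\rd t|\leq c\,M_0M_1^2/\nu$, which you invoke as a known ``$L^2$ analogue'' of \eqref{boundderu} but neither prove nor can borrow from this paper. The paper records only the $V$-norm bound $\|\rd\bu/\rd t\|\leq R_1=cM_1^3\Lambda/\nu$; combined with Poincar\'e this yields $|\rd\bu/\rd t|\leq cM_1^3\Lambda/(\nu\lambda_1^{1/2})$, which coincides with $cM_0M_1^2\Lambda/\nu$ in the periodic case (where $M_0=2M_1/\lambda_1^{1/2}$) but is strictly larger than your claimed bound under no-slip conditions, and in either case carries the extra logarithm $\Lambda$. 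To obtain exactly the constant \eqref{defC1} one needs a genuinely $H$-norm estimate of $\rd\bu/\rd t$, e.g.\ via Cauchy's integral formula applied to the analytic extension $\widetilde\bu$ on a region where $|\widetilde\bu|\leq 2M_0$, in the spirit of the Appendix, and even that route naturally produces a logarithmic correction. Since all constants here are specified only up to an absolute $c$ and the proposition is being quoted from \cite{FoiasManleyTemam1988}, this is a gap in rigor rather than in structure, but it is the single ingredient of your argument that must be either proved or cited precisely.
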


\begin{prop}\label{propPhi1}
Let $\bu_0 \in H$ and let $\bu$ be a solution of \eqref{eqNSE} on $[0, \infty)$ satisfying $\bu(0) = \bu_0$. Then, there exists $T_{0,2} = T_{0,2}(\nu, \lambda_1,|\f|,|\bu_0|) \geq 0$ such that
 \be\label{distuPhi1L2aa}
 |\Phi_1(P_N \bu(t)) - Q_N \bu(t)| \leq C \frac{L_N}{\lambda_{N+1}^{3/2}} \quad \forall t \geq T_{0,2}, \quad \forall N \in \mathbb{Z^+},
\ee
and
\be\label{distuPhi1H1aa}
 \|\Phi_1(P_N\bu(t)) - Q_N \bu(t)\| \leq C \frac{L_N}{\lambda_{N+1}} \quad \forall t \geq T_{0,2}, \quad \forall N \in \mathbb{Z^+},
 \ee
 where $C$ is a constant depending on $\nu$, $\lambda_1$ and $|\f|$, but independent of $N$.
\end{prop}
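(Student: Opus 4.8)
The plan is to project the Navier--Stokes equation \eqref{eqNSE} onto the high modes and compare the resulting \emph{exact} expression for $Q_N\bu$ with the \emph{definition} of $\Phi_1$. Write $\bp := P_N\bu$ and $\bq := Q_N\bu$. Since $Q_N = I - P_N$ commutes with $A$ and with $\rd/\rd t$, applying $Q_N$ to \eqref{eqNSE} gives $\nu A\bq = Q_N\f - Q_N B(\bu,\bu) - \frac{\rd\bq}{\rd t}$, whereas $\Phi_1(\bp) = (\nu A)^{-1}Q_N[\f - B(\bp,\bp)]$. Subtracting yields the working identity
\[
\Phi_1(\bp) - \bq = (\nu A)^{-1}Q_N\left[B(\bu,\bu) - B(\bp,\bp)\right] + (\nu A)^{-1}\frac{\rd \bq}{\rd t}.
\]
Everything reduces to estimating the two terms on the right in the $H$ and $V$ norms, the whole point being that $A^{-1}Q_N$ gains powers of $\lambda_{N+1}^{-1}$: for $\bg\in Q_N H$ one has $|A^{-s}\bg|\leq\lambda_{N+1}^{-s}|\bg|$, so in particular $|A^{-1}\bg|\leq\lambda_{N+1}^{-1}|\bg|$ and $\|A^{-1}\bg\|=|A^{-1/2}\bg|\leq\lambda_{N+1}^{-1/2}|\bg|$. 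All the bounds invoked below hold for $t\geq T_{0,2}:=\max\{T_0,T_{0,1}\}$, with $T_0$ from Proposition \ref{propboundsu} (giving $\|\bu\|\leq M_1$ and $\|\frac{\rd\bu}{\rd t}\|\leq R_1$) and $T_{0,1}$ from Proposition \ref{propq} (giving the bounds on $\bq$).

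The linear term is immediate: using $\frac{\rd\bq}{\rd t}=Q_N\frac{\rd\bu}{\rd t}$, the smoothing estimates, and \eqref{boundderu}, one gets $|(\nu A)^{-1}\frac{\rd\bq}{\rd t}|\leq \nu^{-1}\lambda_{N+1}^{-3/2}R_1$ and $\|(\nu A)^{-1}\frac{\rd\bq}{\rd t}\|\leq\nu^{-1}\lambda_{N+1}^{-1}R_1$, already of the claimed orders. For the nonlinear term I would expand $B(\bu,\bu)-B(\bp,\bp)=B(\bp,\bq)+B(\bq,\bp)+B(\bq,\bq)$ and estimate each piece by duality: $|A^{-1}Q_N B(\cdot,\cdot)|=\sup\{\langle B(\cdot,\cdot),A^{-1}\by\rangle:\by\in Q_N H,\ |\by|=1\}$, and analogously with $A^{-1/2}$ for the $V$ norm. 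Setting $\bg:=A^{-1}\by$, so that $|\bg|\leq\lambda_{N+1}^{-1}$ and $\|\bg\|\leq\lambda_{N+1}^{-1/2}$, each trilinear term is controlled using $\|\bu\|\leq M_1$ together with $|\bq|\leq C_0 L_N\lambda_{N+1}^{-1}$ and $\|\bq\|\leq C_1 L_N\lambda_{N+1}^{-1/2}$ from Proposition \ref{propq}. The term $B(\bq,\bp)$ is the benign one: the two-dimensional Ladyzhenskaya inequality $\|\bphi\|_{L^4}\leq c|\bphi|^{1/2}\|\bphi\|^{1/2}$ gives $\langle B(\bq,\bp),\bg\rangle\leq c|\bq|^{1/2}\|\bq\|^{1/2}\|\bp\|\,|\bg|^{1/2}\|\bg\|^{1/2}$, in which every factor is exploited at full strength to produce exactly $\lambda_{N+1}^{-3/2}$ with a single $L_N$; and $B(\bq,\bq)$ is of lower order $\lambda_{N+1}^{-2}$ and harmless.

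The delicate point --- and the main obstacle --- is to squeeze the full power $\lambda_{N+1}^{-3/2}$ (respectively $\lambda_{N+1}^{-1}$ for the $V$ estimate) out of the mixed term $B(\bp,\bq)$, where the derivative falls on the small factor $\bq$. A direct Ladyzhenskaya estimate splits $\bq$ as $|\bq|^{1/2}\|\bq\|^{1/2}$ and loses a quarter power, yielding only $\lambda_{N+1}^{-5/4}$. To recover the sharp exponent one should move the derivative off $\bq$ via the orthogonality \eqref{propBorthog1}, $\langle B(\bp,\bq),\bg\rangle=-\langle B(\bp,\bg),\bq\rangle=-\int_\Omega(\bp\cdot\nabla)\bg\cdot\bq\,\rd\bx$, and then place the low-mode factor in $L^\infty$ through the Agmon / Br\'ezis--Gallouet inequality $\|\bp\|_{L^\infty}\leq c\|\bp\|\,[1+\log(\lambda_N/\lambda_1)]^{1/2}=c\|\bp\|\,L_N$; pairing this with $\|\bg\|\leq\lambda_{N+1}^{-1/2}$ and the \emph{undifferentiated} factor $|\bq|\leq C_0 L_N\lambda_{N+1}^{-1}$ then delivers the required $\lambda_{N+1}^{-3/2}$ decay, and this is precisely the step through which the logarithmic factor $L_N$ enters the estimate. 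Summing the three nonlinear contributions with the time-derivative term gives \eqref{distuPhi1L2aa}, and repeating the entire argument verbatim with $A^{-1/2}Q_N$ in place of $A^{-1}Q_N$ --- which costs exactly half a power of $\lambda_{N+1}$ --- gives \eqref{distuPhi1H1aa}. Since this computation is carried out in detail in \cite{FoiasManleyTemam1988}, one may alternatively simply invoke that reference.
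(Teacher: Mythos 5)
The paper offers no proof of this proposition at all --- it is recalled verbatim from \cite{FoiasManleyTemam1988} (see also \cite{Titi1990}) --- so the only meaningful comparison is with the argument in those references, and your reconstruction follows exactly that route: apply $Q_N$ to \eqref{eqNSE}, subtract the definition of $\Phi_1$ to get $\Phi_1(\bp)-\bq=(\nu A)^{-1}Q_N[B(\bu,\bu)-B(\bp,\bp)]+(\nu A)^{-1}\tfrac{\rd\bq}{\rd t}$, and then trade powers of $\lambda_{N+1}^{-1}$ for the smoothing of $A^{-1}Q_N$ against the bounds of Propositions \ref{propboundsu} and \ref{propq}. The identity, the splitting $B(\bu,\bu)-B(\bp,\bp)=B(\bp,\bq)+B(\bq,\bp)+B(\bq,\bq)$, the treatment of the time-derivative and of the terms $B(\bq,\bp)$, $B(\bq,\bq)$, and the observation that the $V$-estimate is the same computation with $A^{-1/2}Q_N$ are all correct and are precisely how the cited proof goes; your diagnosis that the mixed term $B(\bp,\bq)$ is where the quarter power would otherwise be lost, and that one must move the derivative onto $\bg$ via \eqref{propBorthog1} and invoke a Br\'ezis--Gallouet bound on the low modes, is also the right idea. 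The one quantitative wrinkle: as you set it up, the mixed term produces \emph{two} logarithms --- one from $\|\bp\|_{L^\infty}\leq c\|\bp\|L_N$ and one already sitting inside $|\bq|\leq C_0L_N\lambda_{N+1}^{-1}$ --- so your argument delivers $C L_N^2/\lambda_{N+1}^{3/2}$ (resp.\ $CL_N^2/\lambda_{N+1}$) rather than the single power of $L_N$ stated in \eqref{distuPhi1L2aa}--\eqref{distuPhi1H1aa}. This discrepancy is a normalization issue (in \cite{FoiasManleyTemam1988} the logarithmic factor is not square-rooted) rather than a flaw in your reasoning, and it is immaterial for every subsequent use of the proposition in this paper, where only the polynomial rate in $\lambda_{N+1}$ matters; but if you want to claim the bound exactly as stated you should either track the log convention of the reference or simply cite it, as you note at the end.
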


Another important property of the mapping $\Phi_1$ is that its restriction to $P_N B_V(R)$, for any $R > 0$, is a Lipschitz mapping with respect to the norms of both $H$ and $V$. More specifically,
\be\label{Phi1LipsH}
 | \Phi_1(\bp_1) - \Phi_1(\bp_2)| \leq l |\bp_1 - \bp_2| \quad \forall \bp_1, \bp_2 \in P_N B_V(R)
\ee
and
\be\label{Phi1LipsV}
  \| \Phi_1(\bp_1) - \Phi_1(\bp_2)\| \leq l \|\bp_1 - \bp_2\| \quad \forall \bp_1, \bp_2 \in P_N B_V(R),
\ee
where $l = C \lambda_{N+1}^{-1/4}$ and $C$ is a constant depending on $\nu, \lambda_1$ and $R$.

In addition to the properties above, the proof of Theorem \ref{thmerrorPPGM} requires additional properties from the interpolant $I_h$, namely:
\be\label{propIh2}
 \|\varphi - I_h(\varphi)\|_{H^{-1}} \leq c_{-1} h |\varphi|, \quad \forall \varphi \in (L^2(\Omega))^2
\ee
and
\be\label{propIh3}
   |I_h(\bq)| \leq \widetilde{c_0} \frac{|\Omega|^{3/4}}{h^2 \lambda_{N+1}^{1/4}} |\bq|, \quad \forall \bq \in Q_N H,
\ee
where $|\Omega|$ denotes the area of $\Omega$, and $c_{-1}$ and $\widetilde{c_0}$ are positive absolute constants. It is not difficult to see that the example of interpolant operator given by a low Fourier modes projector also satisfies properties \eqref{propIh2} and \eqref{propIh3} above. Moreover, one can show that, under periodic boundary conditions, the operator given as sum of local averages over finite volume elements satisfies these additional properties as well (see \cite[Appendix]{MondainiTiti2018}).

\begin{thm}\label{thmerrorPPGM}
Let $\bu$ be a solution of \eqref{eqNSE} on $[0, \infty)$ satisfying the bounds in \eqref{boundsu}, \eqref{boundqL2}, \eqref{boundqH1}, \eqref{distuPhi1L2aa} and \eqref{distuPhi1H1aa} for every $t \geq 0$. Assume that $I_h$ satisfies properties \eqref{propIh}, \eqref{propIh2} and \eqref{propIh3}. Let $\bv_0 \in B_V(M_1)$ and, for each $N \in \mathbb{Z^+}$, let $\bv_N$ be the unique solution of \eqref{eqDAGalerkin} satisfying $\bv_N(0) = P_N \bv_0$. Fix $\alpha \in (1/2,1)$ and assume $\beta,h > 0$ satisfy
\be\label{condbetaerrPPGM}
  \beta \geq \max\left\{ c \frac{M_1^2}{\nu}\left[ 1 + \log \left(\frac{M_1}{\nu\lambda_1^{1/2}}\right)\right] , \left[ c c_\alpha \left( 1 + \frac{1}{1-\alpha}  \right)\frac{|\Omega|^{\alpha - \frac{1}{2}} M_1}{\nu^{\alpha}}\right]^{\frac{1}{1-\alpha}}\right\}
\ee
and
\be\label{condbetaherrPPGM}
   \max\{ c_0, 4 c_{-1} \} \beta  h^2 < \nu,
\ee
where $c_\alpha$, $c_0$ and $c_{-1}$ are the constants from \eqref{ineqBAalpha}, \eqref{propIh} and \eqref{propIh2}, respectively. Then, for every $N \in \mathbb{Z^+}$, there exists $T_2 = T_2(\nu,\lambda_1,|\f|,N) \geq 0$ such that
\be
 \sup_{t \geq T_2}|[\bv_N(t) + \Phi_1(\bv_N(t))] - \bu(t)| \leq C \frac{L_N}{\lambda_{N+1}^{5/4}}
\ee
and
\be
 \sup_{t \geq T_2}\|[\bv_N(t) + \Phi_1(\bv_N(t))] - \bu(t)\| \leq C \frac{L_N}{\lambda_{N+1}^{3/4}},
\ee
where $C$ is a constant depending on $\nu$, $\lambda_1$, $|\f|$ and $1/h^2$, but independent of $N$.
\end{thm}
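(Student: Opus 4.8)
The plan is to reduce the whole statement to two uniform-in-time bounds on the Galerkin data-assimilation error $\bw := \bv_N - P_N\bu$, and then to assemble the estimate using the already-quoted properties of $\Phi_1$. Writing $\bu = P_N\bu + Q_N\bu$ and inserting $\Phi_1(P_N\bu)$, I would decompose
\[
[\bv_N + \Phi_1(\bv_N)] - \bu = (\bv_N - P_N\bu) + [\Phi_1(\bv_N) - \Phi_1(P_N\bu)] + [\Phi_1(P_N\bu) - Q_N\bu].
\]
The last term is controlled directly by Proposition \ref{propPhi1}: after some $T_{0,2}$ it is $\leq C L_N \lambda_{N+1}^{-3/2}$ in $H$ and $\leq C L_N \lambda_{N+1}^{-1}$ in $V$. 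The middle term is controlled by the Lipschitz bounds \eqref{Phi1LipsH}, \eqref{Phi1LipsV} with constant $l = C\lambda_{N+1}^{-1/4}$, so it is $\leq l|\bw|$ in $H$ and $\leq l\|\bw\|$ in $V$. Hence, once I establish the sharp bounds $|\bw(t)| \leq C L_N \lambda_{N+1}^{-5/4}$ and $\|\bw(t)\| \leq C L_N \lambda_{N+1}^{-3/4}$ uniformly in $t$, the middle term is of strictly higher order than $\bw$ itself, and the asserted rates follow with $\bw$ as the dominant contribution. So the entire theorem rests on these two uniform bounds for $\bw$.

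To bound $\bw$, I would subtract $P_N$ applied to \eqref{eqNSE} from \eqref{eqDAGalerkin}; using $\bv_N - \bu = \bw - Q_N\bu$ the error equation becomes
\[
\frac{\rd \bw}{\rd t} + \nu A\bw + P_N[B(\bv_N,\bv_N) - B(\bu,\bu)] = -\beta P_{\sigma}I_h(\bw) + \beta P_{\sigma}I_h(Q_N\bu).
\]
Taking the $H$ inner product with $\bw$ and, separately, with $A\bw$, yields the two energy balances. In each, the feedback term $-\beta P_{\sigma}I_h(\bw)$ is converted into a genuine damping $-\beta|\bw|^2$ (resp. $-\beta\|\bw\|^2$) through the stabilizing inequalities \eqref{displ2} and \eqref{disph1}, at the price of a harmless share of the viscous term. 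After using the orthogonality \eqref{propBorthog2}, the quadratic term splits into ``diagonal'' pieces, quadratic in $\bw$ (such as $\langle B(\bw,\bv_N),A\bw\rangle$ and $\langle B(\bu,\bw),A\bw\rangle$), and ``source'' pieces linear in $Q_N\bu$. The diagonal pieces are absorbed into the damping and viscosity by means of the logarithmic inequalities \eqref{ineqBG} and \eqref{ineqTiti2}, chosen so that the logarithm falls on the smooth argument $\bu$ or $\bv_N$ (whose $\mathcal D(A)$-norms are bounded $N$-independently via Proposition \ref{propboundtimederv}), thereby avoiding any $\log\lambda_N$ factor. This is precisely where the logarithmic lower bound on $\beta$ in the first term of \eqref{condbetaerrPPGM} is forced, and it is also what makes the subsequent Gronwall argument produce a bound \emph{uniform in time}: the damping $\beta|\bw|^2$ dissipates the transient coming from the initial mismatch $\bv_0 - P_N\bu(0)$, so the estimates hold for all $t$ past some $T_2$.

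The crux, and what I expect to be the main obstacle, is the sharp control of the source terms, since these alone set the final rates. The residual feedback term $\beta P_{\sigma}I_h(Q_N\bu)$ must be estimated with the finer interpolant property \eqref{propIh3}, whose factor $\lambda_{N+1}^{-1/4}$ upgrades the naive size $|Q_N\bu| \sim L_N\lambda_{N+1}^{-1}$ of Proposition \ref{propq} to the $\lambda_{N+1}^{-5/4}$ appearing in the statement (this is also why the final constant is allowed to depend on $1/h^2$). The nonlinear source terms are the delicate point: treated crudely they deliver only about $\lambda_{N+1}^{-1}$ in $H$ and $\lambda_{N+1}^{-1/2}$ in $V$, i.e.\ they fall short of the target orders by a factor $\lambda_{N+1}^{1/4}$. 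To recover that factor I would group them into the single difference $B(\bu,\bu) - B(P_N\bu,P_N\bu)$ and estimate it in the $|A^{-\alpha}(\cdot)|$ norm by \eqref{ineqBAalpha}, which trades the full gradient of $Q_N\bu$ for its small $H$-norm $|Q_N\bu|$; the parameter $\alpha \in (1/2,1)$ together with the high-mode concentration of $\bw$ and the higher-order bounds on $\bu$ and $\bv_N$ must then be balanced to keep $\beta$ independent of $N$. This is the origin of the second, $\alpha$-dependent, lower bound on $\beta$ in \eqref{condbetaerrPPGM} (whose constant $c_\alpha$ blows up as $\alpha\to\tfrac12^+$, reflecting exactly this tension) and of the reinforced smallness condition \eqref{condbetaherrPPGM}. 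Once both source contributions are shown to be of the sharp order, the two balances reduce to $\frac{\rd}{\rd t}|\bw|^2 + c\beta|\bw|^2 \leq C L_N^2 \lambda_{N+1}^{-5/2}$ and $\frac{\rd}{\rd t}\|\bw\|^2 + c\beta\|\bw\|^2 \leq C L_N^2 \lambda_{N+1}^{-3/2}$; a uniform Gronwall inequality then gives $|\bw| \leq C L_N\lambda_{N+1}^{-5/4}$ and $\|\bw\| \leq C L_N\lambda_{N+1}^{-3/4}$ for all $t \geq T_2$, and substituting into the decomposition of the first paragraph completes the proof.
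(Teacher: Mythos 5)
Your reduction is structurally sound and in fact matches the architecture of the actual argument: this theorem is not proved from scratch here but imported from \cite{MondainiTiti2018} (see the remark following the statement), and there, exactly as you propose, everything hinges on uniform bounds $|\bv_N-P_N\bu|\lesssim L_N\lambda_{N+1}^{-5/4}$ and $\|\bv_N-P_N\bu\|\lesssim L_N\lambda_{N+1}^{-3/4}$, with the feedback residual handled through \eqref{propIh3} and the nonlinear residual $B(\bu,\bu)-B(P_N\bu,P_N\bu)$ through \eqref{ineqBAalpha}. The gap is in the vehicle you choose for the step you yourself flag as the crux. In an energy balance for $\bw=\bv_N-P_N\bu$, the only dissipative terms available are $\nu\|\bw\|^2=\nu|A^{1/2}\bw|^2$ and $\beta|\bw|^2$. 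To exploit \eqref{ineqBAalpha} you must pair the residual against $A^{\alpha}\bw$ with $\alpha>1/2$, and $|A^{\alpha}\bw|$ is \emph{not} controlled by $|\bw|$ and $\|\bw\|$: since $\bw\in P_NH$ (it lives on the \emph{low} modes, so there is no ``high-mode concentration'' to invoke), the best available bound is $|A^{\alpha}\bw|\le\lambda_N^{\alpha-1/2}\|\bw\|$, and carrying this factor through Young's inequality and Gronwall either reintroduces $N$-dependence into $\beta$ or degrades the $H$-rate to roughly $L_N\lambda_{N+1}^{\alpha-1}$, far from $\lambda_{N+1}^{-5/4}$. So the two clean differential inequalities you end with cannot be derived by the energy method as described.

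What actually closes the argument in \cite{MondainiTiti2018} is the Duhamel representation
$\bw(t)=\Exp^{-(t-t_0)(\nu AP_N+\beta P_N)}\bw(t_0)+\int_{t_0}^t\Exp^{-(t-s)(\nu AP_N+\beta P_N)}[\cdots]\,\rd s$,
in which the parabolic smoothing of the semigroup pays for the $A^{\alpha}$ weight via the integral bound recalled in the remark after the theorem,
\[
\int_{t_0}^t \left\| \nu^{\alpha} A^{\alpha}\Exp^{-(t - s) (\nu A P_N + \beta P_N)} \right\|_{\mL(P_N H)} \rd s \leq \frac{(\nu \lambda_1)^{\alpha}}{\beta} + \frac{1}{\beta^{1-\alpha}} \left( \frac{2}{1-\alpha} + 3 \right).
\]
This is precisely where the second lower bound on $\beta$ in \eqref{condbetaerrPPGM} originates: it ensures $c\,c_\alpha\bigl(1+\tfrac{1}{1-\alpha}\bigr)|\Omega|^{\alpha-\frac12}M_1\nu^{-\alpha}\beta^{-(1-\alpha)}\le 1$, so the nonlinear contribution can be absorbed. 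Your sketch reproduces the bookkeeping of the proof but omits the one mechanism --- smoothing in time --- that makes the $\alpha>1/2$ estimate usable. (A minor further omission: you never use \eqref{propIh2}, which is why the condition $4c_{-1}\beta h^2<\nu$ appears in \eqref{condbetaherrPPGM}.)
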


\begin{rmk}
	The statement of Theorem \ref{thmerrorPPGM} actually differs slightly from the one given in \cite{MondainiTiti2018}, which required, in particular, the number of modes $N$ to be sufficiently large and also more strict conditions on the parameters $\beta$ and $h$. In order to obtain the more general version stated in Theorem \ref{thmerrorPPGM}, above, one proceeds in the following way: first, showing the upper bound of a solution $\bv_N$ of \eqref{eqDAGalerkin} in the $V$ norm by using arguments similar to the ones from the proof of \eqref{boundtildevNV} in the Appendix; secondly, by using the following estimate for the integral with respect to $s \in (t_0,t)$ of the operator norm of $\nu^{\gamma} A^{\gamma}\Exp^{-(t - s) (\nu A P_N + \beta P_N)}$, for any $\gamma \in [1/2, 1) $, whenever it appears (in particular, in the proofs of Theorems 3.5 and 3.10 in \cite{MondainiTiti2018}):
\be
	\int_{t_0}^t \| \nu^{\gamma} A^{\gamma}\Exp^{-(t - s) (\nu A P_N + \beta P_N)} \|_{\mL(P_N H)} \rd s \leq \frac{(\nu \lambda_1)^\gamma}{\beta} + \frac{1}{\beta^{1-\gamma}} \left( \frac{2}{1-\gamma} + 3 \right).
\ee
\end{rmk}

We conclude the section by summarizing the main assumptions that are used several times throughout this paper:
\renewcommand{\theenumi}{{A}\arabic{enumi}}
\begin{enumerate}
	\item\label{A1} $\bu$ is a solution of \eqref{eqNSE} on $[0,\infty)$ satisfying the uniform bounds from \eqref{boundsu} for every $t \geq 0$;
	\item\label{A2} $I_h$ satisfies \eqref{propIh};
	\item\label{A3} $\beta > 0$ and $h > 0$ satisfy conditions \eqref{propvcondbeta} and \eqref{propvcondbetah}, with an appropriate constant $c$ that does not depend on any physical parameter;
	\item\label{A4} $\tau>0$ and $N\in\mathbb{Z^+}$ are arbitrarily fixed.
\end{enumerate}

\section{Main Results}\label{secMainRes}

In this section, we present the analysis concerning the time-discrete approximations of \eqref{eqDAGalerkin}. Subsection \ref{subsecSI} deals with the semi-implicit Euler scheme, while subsection \ref{subsecFI} concerns the fully implicit Euler scheme. We start by stating a version of the discrete Gronwall lemma that will be needed in the subsequent results. The proof follows by a simple induction argument. Throughout this work, we adopt the convention that $0\in\mathbb{N}$, for simplicity.

\begin{lem}\label{discretegronwall}
Let $\{a_k\}_{k\in\mathbb{N}}$ and $\{b_k\}_{k\in\mathbb{N}}$ be sequences of non-negative real numbers satisfying
\be\label{reggr}
(1+\gamma)a_{k+1}\leq a_k+ b_k \quad \forall k\in \{0, 1,\ldots, n\},
\ee
for some $n\in\mathbb{Z^+}$ and $\gamma \in \mathbb{R}$ such that $(1+\gamma)>0$. Then, it follows that
\be\label{grin1}
	a_m \leq  \frac{a_{0}}{(1+\gamma)^{m}}+\sum_{k=0}^{m-1}\frac{b_k}{(1+\gamma)^{m-k}} \quad \forall m\in \{1,\dots,n+1\}.
\ee
In particular, if $\{b_k\}_{k\in\mathbb{N}}\in l^{\infty}(\mathbb{N})$ and $\gamma\neq0$, then
\be\label{grin2}
a_m \leq \frac{a_0}{(1+\gamma)^{m}}+\frac{1}{\gamma} \sup_{k \in \mathbb{N}} \{ b_k\}  \quad \forall m\in \{1,\ldots,n+1\}.
\ee
Moreover, if \eqref{reggr} is valid for every $k\in \mathbb{N}$, then \eqref{grin1} and \eqref{grin2} hold for every $m\in \mathbb{Z^+}$.
\end{lem}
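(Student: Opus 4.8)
The plan is to establish \eqref{grin1} by a direct induction on $m$, then read off \eqref{grin2} by summing a finite geometric series, and finally note that both bounds persist for all $m$ once \eqref{reggr} is assumed for every $k \in \mathbb{N}$. Since the recursion in \eqref{reggr} couples $a_{k+1}$ to $a_k$ with the positive factor $(1+\gamma)$, the induction is clean and no serious obstacle is expected; the only care needed is the exponent bookkeeping in the inductive step and the reindexing of the geometric sum.

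First I would treat the base case $m=1$: applying \eqref{reggr} with $k=0$ and dividing by $1+\gamma>0$ gives $a_1 \leq \frac{a_0}{1+\gamma} + \frac{b_0}{1+\gamma}$, which is precisely \eqref{grin1} for $m=1$. For the inductive step, suppose \eqref{grin1} holds for some $m$ with $1 \leq m \leq n$. Applying \eqref{reggr} with $k=m$ (legitimate because $m \leq n$) and dividing by $1+\gamma$ yields $a_{m+1} \leq \frac{a_m}{1+\gamma} + \frac{b_m}{1+\gamma}$. Substituting the inductive hypothesis for $a_m$ and distributing the factor $(1+\gamma)^{-1}$ raises each exponent by one, converting $\frac{a_0}{(1+\gamma)^{m}}$ into $\frac{a_0}{(1+\gamma)^{m+1}}$ and each $\frac{b_k}{(1+\gamma)^{m-k}}$ into $\frac{b_k}{(1+\gamma)^{m+1-k}}$, while the extra term $\frac{b_m}{1+\gamma}$ is exactly the missing $k=m$ summand $\frac{b_m}{(1+\gamma)^{(m+1)-m}}$. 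This produces \eqref{grin1} at level $m+1$ and closes the induction on $\{1,\dots,n+1\}$.

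To obtain \eqref{grin2} I would bound every $b_k$ by $\sup_{k \in \mathbb{N}}\{b_k\}$ in the sum of \eqref{grin1} and reindex with $j=m-k$, so that $\sum_{k=0}^{m-1}(1+\gamma)^{-(m-k)} = \sum_{j=1}^{m}(1+\gamma)^{-j}$. In the case $\gamma>0$, where the ratio $(1+\gamma)^{-1}<1$, this finite geometric sum equals $\frac{1}{\gamma}\bigl[1-(1+\gamma)^{-m}\bigr]$, which is at most $\frac{1}{\gamma}$ because $1-(1+\gamma)^{-m}\leq 1$. Adding back the term $\frac{a_0}{(1+\gamma)^{m}}$ from \eqref{grin1} then gives \eqref{grin2}; this is the regime that matters in the applications, since the nudging term produces a genuinely positive $\gamma$.

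Finally, if \eqref{reggr} is assumed for every $k\in\mathbb{N}$ rather than only up to a fixed $n$, the inductive step above is valid at every level and never terminates, so \eqref{grin1}, and hence \eqref{grin2}, holds for all $m\in\mathbb{Z^+}$. The hard part, such as it is, is purely organizational: tracking the shift of exponents when multiplying the inductive hypothesis by $(1+\gamma)^{-1}$ and correctly collapsing the geometric series after reindexing.
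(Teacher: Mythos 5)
Your induction for \eqref{grin1} and the geometric-series collapse for \eqref{grin2} are exactly the "simple induction argument" the paper invokes without writing out, and every step (base case, exponent shift in the inductive step, reindexing $j=m-k$) is correct. Your restriction to $\gamma>0$ in the second part is harmless — indeed, for $-1<\gamma<0$ the geometric sum $\frac{1}{\gamma}\bigl[1-(1+\gamma)^{-m}\bigr]$ is positive and is \emph{not} bounded by $\frac{1}{\gamma}<0$, so \eqref{grin2} only makes sense for $\gamma>0$, which is the only case the paper ever uses.
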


\subsection{Semi-implicit in time scheme}\label{subsecSI}

We consider a sequence of discrete times $t_k = k \tau$, $k \in \mathbb{N}$, with $\tau > 0$ being the time step size. The semi-implicit Euler method applied to \eqref{eqDAGalerkin} consists in finding, for each $k \in \mathbb{N}$, an approximation of $\vn(t_k)$ given by $\vn^k$ satisfying the following scheme

\be\label{semimp}
\frac{\bv_N^{k+1}-\bv_N^k}{\tau}+\nu A\bv_N^{k+1}+P_NB\left(\bv_N^k,\bv_N^{k+1}\right)=P_N\f-\beta P_N P_\sigma I_h\left(\bv_N^{k+1}-\bu\left(t_{k+1}\right)\right).
\ee

First, we prove existence and uniqueness of the initial-value problem associated to \eqref{semimp}.

\begin{prop}\label{uniqsemi}
Let $\bu$ be a solution of \eqref{eqNSE} on $[0, \infty)$ and assume hypotheses \eqref{A2} and (\textbf{A4}). Suppose that $\beta > 0$ and $h > 0$ satisfy $c_0 \beta h^2 \leq \nu$. Then, given  $\bv_{N,0} \in P_N H$, there exists a unique solution $\{\bv_N^k\}_{k \in \mathbb{N}}$ of \eqref{semimp} corresponding to $I_h(\bu)$ and satisfying $\bv_N^0 = \bv_{N,0}$.	
\end{prop}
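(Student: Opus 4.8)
The plan is to exploit the semi-implicit structure, under which \eqref{semimp} is, at each step, a \emph{linear} equation for the unknown $\bv_N^{k+1}$: the nonlinear term $P_N B(\bv_N^k, \bv_N^{k+1})$ is linear in its second slot, while $\bv_N^k$ and $\bu(t_{k+1})$ are already known. Proceeding by induction on $k$, I would assume $\bv_N^k \in P_N H$ is given and collect all terms involving the unknown $\bw := \bv_N^{k+1}$ on the left. Using that $I_h$ is linear, so that $I_h(\bw - \bu(t_{k+1})) = I_h(\bw) - I_h(\bu(t_{k+1}))$, this rewrites \eqref{semimp} as $\mL_k \bw = \bg_k$, where
\be
\mL_k \bw := \frac{\bw}{\tau} + \nu A \bw + P_N B(\bv_N^k, \bw) + \beta P_N P_\sigma I_h(\bw)
\ee
and $\bg_k := \tau^{-1}\bv_N^k + P_N \f + \beta P_N P_\sigma I_h(\bu(t_{k+1})) \in P_N H$ is a fixed datum. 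Since $\mL_k$ is a linear map of the finite-dimensional space $P_N H$ into itself, existence and uniqueness of $\bw$ for this (and every) right-hand side reduce to injectivity of $\mL_k$, i.e.\ to showing that $\mL_k \bw = 0$ forces $\bw = 0$.

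Injectivity will follow from coercivity. Taking the $H$-inner product of $\mL_k \bw$ with $\bw$, I would use $(A\bw, \bw) = \|\bw\|^2$ and, crucially, that the nonlinear contribution vanishes: since $\bw \in P_N H$ and $P_N$ is self-adjoint, $(P_N B(\bv_N^k, \bw), \bw) = \langle B(\bv_N^k, \bw), \bw\rangle = 0$ by the orthogonality property \eqref{propBorthog2}. This yields
\be
(\mL_k \bw, \bw) = \frac{1}{\tau}|\bw|^2 + \nu \|\bw\|^2 + \beta(P_\sigma I_h(\bw), \bw).
\ee
The nudging term is controlled from below by inequality \eqref{displ2}, whose hypothesis is exactly the assumed $c_0 \beta h^2 \leq \nu$: rearranging \eqref{displ2} gives $2\beta(P_\sigma I_h(\bw),\bw) \geq \beta|\bw|^2 - \nu\|\bw\|^2$, hence
\be
(\mL_k \bw, \bw) \geq \frac{1}{\tau}|\bw|^2 + \frac{\nu}{2}\|\bw\|^2 + \frac{\beta}{2}|\bw|^2.
\ee
Thus $\mL_k \bw = 0$ implies $|\bw| = 0$, so $\bw = 0$, proving $\mL_k$ is injective and therefore invertible on $P_N H$. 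This produces the unique $\bv_N^{k+1}$, and the induction started from $\bv_N^0 = \bv_{N,0}$ delivers the unique full sequence $\{\bv_N^k\}_{k\in\mathbb{N}}$ satisfying \eqref{semimp}.

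There is essentially no hard analytic obstacle, since each per-step problem is finite-dimensional and linear; the only points requiring care are the bookkeeping that isolates $\mL_k$ correctly (in particular, that the nudging term is linear in $\bw$ thanks to the linearity of $I_h$) and the observation that the semi-implicit treatment of advection is precisely what makes $(P_N B(\bv_N^k,\bw),\bw)$ vanish via \eqref{propBorthog2}, so that no smallness of $\tau$ is needed. By contrast, a fully-implicit treatment produces $B(\bw,\bw)$, which is not linear in $\bw$; there the per-step solvability cannot be read off from linear algebra and instead calls for a fixed-point or topological-degree argument, which is the structural distinction separating this proposition from its fully-implicit counterpart (cf.\ Proposition \ref{exsfull}).
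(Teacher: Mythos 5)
Your proposal is correct and follows essentially the same route as the paper: both arguments reduce the per-step problem to showing that the homogeneous linear equation on the finite-dimensional space $P_N H$ has only the trivial solution, using the orthogonality property \eqref{propBorthog2} to kill the advection term and inequality \eqref{displ2} to absorb the nudging term, so that no smallness of $\tau$ is needed. The only cosmetic difference is that you phrase this as coercivity/injectivity of the operator $\mL_k$ while the paper subtracts two putative solutions, which is the same computation.
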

\begin{proof}
Since \eqref{semimp} is a linear equation in a finite-dimensional space, existence follows immediately once we prove uniqueness. For proving uniqueness, it suffices to show that given $k \in \mathbb{N}$ and $\bv_N^k \in P_N H$, there exists a unique $\bv_N^{k+1} \in P_N H$ satisfying \eqref{semimp}. Suppose, on the contrary, that there exist two solutions of \eqref{semimp}, namely $\bv_{N,1}^{k+1}$ and $\bv_{N,2}^{k+1}$. Then, $\bxi = \bv_{N,1}^{k+1} - \bv_{N,2}^{k+1}$ satisfies
\be\label{semimpxi}
\frac{1}{\tau}\bxi+\nu A\bxi+P_NB(\bv_N^k,\bxi)=-\beta P_N P_\sigma I_h(\bxi).
\ee
Taking the inner product of \eqref{semimpxi} with $\bxi$ in $H$ and using \eqref{displ2} along with the orthogonality property \eqref{propBorthog2}, we obtain that
\[
\frac{1}{\tau}\left|\bxi\right|^2+\frac{\nu}{2}\left\|\bxi\right\|^2\leq 0,
\]
which implies $\bxi = 0$ and thus proves uniqueness.
\end{proof}

Next, we show the results concerning stability of the scheme \eqref{semimp}. The proof of stability in the $H$ norm is a bit simpler due to the orthogonality property \eqref{propBorthog2} of $B$. In order to establish stability in the $V$ norm, we assume that the initial data $\bv_{N,0}$ belongs to $B_V(M_1)$ and proceed via an inductive argument, by exploiting the fact that the feedback-control (nudging) term provides an extra dissipation term. In particular, this allows us to obtain an upper bound which is independent of the nudging parameter $\beta$, a fact that is crucial for the proof of stability of the fully-implicit scheme in subsection \ref{subsecFI}.

\begin{thm}\label{semibounds}
Assume hypotheses \eqref{A1}-\eqref{A4}. Consider $\bv_{N,0} \in P_N H \cap B_V(M_1)$ and denote the unique solution of \eqref{semimp} corresponding to $I_h(\bu)$ and satisfying $\bv_N^0 = \bv_{N,0}$ by $\{\bv_N^k\}_{k\in \mathbb{N}}$. Then, the following inequalities hold for any $n\in\mathbb{N}$:
\be\label{semil2bd}
  \left|\bv_N^n\right|^2\leq \frac{\left|\bv_{N,0}\right|^2}{\left(1+\frac{\tau}{2}\left(\beta+2\nu \lambda_1\right)\right)^n}+\frac{12\left|\f\right|^2}{\beta\left(\beta+2\nu\lambda_1\right)}+\frac{12\beta M_0^2}{\beta+2\nu\lambda_1}+\frac{12\nu M_1^2}{\beta+2\nu\lambda_1},
\ee
\be\label{semih1bd}
  \left\|\bv_N^n\right\|^2\leq \frac{\left\|\bv_{N,0}\right\|^2}{\left(1+\frac{\tau}{4}\left(\beta+\nu\lambda_1\right)\right)^n}+\frac{24\left|\f\right|^2}{\nu\left(\beta+\nu\lambda_1\right)}+\frac{32\beta M_1^2}{\beta+\nu\lambda_1}.
\ee
In particular, using \eqref{boundf}, we have
\be\label{semibdsimple}
\left|\bv_N^n\right|\leq \lambda_1^{-1/2}\left\|\bv_N^n\right\|\leq 6 \lambda_1^{-1/2}M_1.
\ee
\end{thm}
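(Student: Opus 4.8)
The plan is to test the scheme \eqref{semimp} in $H$ against $\bv_N^{k+1}$ (to get \eqref{semil2bd}) and against $A\bv_N^{k+1}$ (to get \eqref{semih1bd}); in each case I will reduce the resulting identity to a one-step recurrence $(1+\gamma)a_{k+1}\leq a_k+b_k$ and close with Lemma \ref{discretegronwall}. For \eqref{semil2bd}, taking the inner product with $\bv_N^{k+1}$ and handling the discrete time derivative through the identity $2(\bphi-\bpsi,\bphi)=|\bphi|^2-|\bpsi|^2+|\bphi-\bpsi|^2$, the decisive simplification is that the nonlinear term drops out, since $\langle B(\bv_N^k,\bv_N^{k+1}),\bv_N^{k+1}\rangle=0$ by \eqref{propBorthog2}. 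Estimating the self-interaction part of the nudging term by \eqref{displ2} and the part carrying $\bu(t_{k+1})$ by \eqref{leftovl2} with $\alpha_0=6$, applying Young's inequality to $(\f,\bv_N^{k+1})$, inserting the bounds $|\bu(t_{k+1})|\leq M_0$ and $\|\bu(t_{k+1})\|\leq M_1$ from \eqref{A1}, using \eqref{Poincare} to bound $\nu\|\bv_N^{k+1}\|^2\geq\nu\lambda_1|\bv_N^{k+1}|^2$, and discarding the nonnegative term $|\bv_N^{k+1}-\bv_N^k|^2$, I obtain $(1+\tfrac{\tau}{2}(\beta+2\nu\lambda_1))|\bv_N^{k+1}|^2\leq|\bv_N^k|^2+\tau\big(\tfrac{6}{\beta}|\f|^2+6\beta M_0^2+6\nu M_1^2\big)$; then \eqref{grin2} with $\gamma=\tfrac{\tau}{2}(\beta+2\nu\lambda_1)$ gives \eqref{semil2bd} with the stated constants.

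For \eqref{semih1bd} I would test against $A\bv_N^{k+1}$. The time-derivative and viscosity terms become $\tfrac{1}{2\tau}(\|\bv_N^{k+1}\|^2-\|\bv_N^k\|^2+\|\bv_N^{k+1}-\bv_N^k\|^2)$ and $\nu|A\bv_N^{k+1}|^2$, the nudging terms are controlled by \eqref{disph1} and \eqref{leftovh1} (with the parameters satisfying $\alpha_0+\alpha_1=8$), and $(\f,A\bv_N^{k+1})$ by Young's inequality. The main obstacle is that now the nonlinear term $\langle B(\bv_N^k,\bv_N^{k+1}),A\bv_N^{k+1}\rangle$ no longer vanishes. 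I would estimate it by the logarithmic inequality \eqref{ineqTiti2} and Young's inequality, splitting off a share $\tfrac{\nu}{3}|A\bv_N^{k+1}|^2$ of the viscous term and leaving a logarithmically amplified term of the form $\tfrac{c}{\nu}\|\bv_N^k\|^2\|\bv_N^{k+1}\|^2[1+\log r]$, with $r=|A\bv_N^{k+1}|/(\lambda_1^{1/2}\|\bv_N^{k+1}\|)\geq1$.

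The heart of the matter — and the step I expect to be hardest — is absorbing this amplified term into the extra dissipation $\beta\|\bv_N^{k+1}\|^2$ furnished by \eqref{disph1}, with constants independent of $N$. I would argue by induction on $k$ under the hypothesis $\|\bv_N^k\|^2\leq\rho^2:=33M_1^2$, whose base case holds since $\bv_{N,0}\in B_V(M_1)$. Using the elementary bound $\log r\leq\varepsilon r^2+\tfrac12\log\tfrac{1}{2\varepsilon}$, valid for every $r\geq1$, with $\varepsilon$ of order $\nu^2\lambda_1/\rho^2$, the $\varepsilon r^2$ part is returned to the viscous term (because $\|\bv_N^{k+1}\|^2r^2=\lambda_1^{-1}|A\bv_N^{k+1}|^2$), while the induction hypothesis turns the remaining coefficient of $\|\bv_N^{k+1}\|^2$ into one of size $\tfrac{cM_1^2}{\nu}[1+\log(M_1/(\nu\lambda_1^{1/2}))]$. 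This is precisely the quantity that condition \eqref{propvcondbeta} bounds by a small fraction of $\beta$, so the term is swallowed by the nudging dissipation. The essential point is that, since the logarithm is evaluated at the a priori level $\rho\sim M_1$ rather than at $\lambda_N$, the whole estimate is uniform in $N$.

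Collecting the surviving terms and using \eqref{Poincare} once more to produce $\tfrac{\nu\lambda_1}{4}\|\bv_N^{k+1}\|^2$, the recurrence takes the form $(1+\tfrac{\tau}{4}(\beta+\nu\lambda_1))\|\bv_N^{k+1}\|^2\leq\|\bv_N^k\|^2+\tau\big(\tfrac{6}{\nu}|\f|^2+8\beta M_1^2\big)$. Since, by \eqref{boundf}, $\tfrac{24|\f|^2}{\nu(\beta+\nu\lambda_1)}+\tfrac{32\beta M_1^2}{\beta+\nu\lambda_1}\leq\tfrac{24\nu\lambda_1+32\beta}{\beta+\nu\lambda_1}M_1^2\leq32M_1^2<\rho^2$, this one-step map sends the ball $\{\|\cdot\|\leq\rho\}$ into itself, which closes the induction. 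Finally \eqref{grin2} with $\gamma=\tfrac{\tau}{4}(\beta+\nu\lambda_1)$ yields \eqref{semih1bd}; dropping the decaying first term and using the same computation gives $\|\bv_N^n\|^2\leq\|\bv_{N,0}\|^2+32M_1^2\leq33M_1^2\leq36M_1^2$, which is \eqref{semibdsimple}.
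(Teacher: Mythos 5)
Your proposal is correct and follows essentially the same route as the paper: orthogonality \eqref{propBorthog2} together with \eqref{displ2}, \eqref{leftovl2} and the discrete Gronwall lemma for \eqref{semil2bd}, and an induction on the $V$-norm bound for \eqref{semih1bd} in which the logarithmically amplified term from \eqref{ineqTiti2} is absorbed into the nudging dissipation $\beta\|\bv_N^{k+1}\|^2$ via condition \eqref{propvcondbeta}, with the recurrence \eqref{comppp9} closed by Lemma \ref{discretegronwall}. The only cosmetic difference is the elementary inequality used for the absorption — your $\log r\leq\varepsilon r^2+\tfrac12\log\tfrac{1}{2\varepsilon}$ versus the paper's $\min_{x\geq1}[x-\alpha(1+\log x)]\geq-\alpha\log\alpha$ in \eqref{ineqminlog} — which are two forms of the same Legendre-type bound and yield the same conclusion.
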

\begin{proof}
We start by proving inequality \eqref{semil2bd}. Taking the inner product of \eqref{semimp} with $2\tau \bv_N^{k+1}$ in $H$, we obtain that
\begin{multline}\label{comp2}
\left|\bv_N^{k+1}\right|^2+\left|\bv_N^{k+1}-\bv_N^k\right|^2-\left|\bv_N^k\right|+2\tau \nu \left\|\bv_N^{k+1}\right\|^2\\
=2\tau\left(\f,\bv_N^{k+1}\right)-2\tau \beta\left(I_h\left(\bv_N^{k+1}-\bu\left(t_{k+1}\right)\right),\bv_N^{k+1}\right),
\end{multline}
where we used the Hilbert space identity
\be
2(a-b,a)= |a|^2 + |a-b|^2 - |b|^2
\ee
and orthogonality property \eqref{propBorthog2} of the bilinear term $B$. We proceed to bound the terms in the right-hand side of \eqref{comp2}. By Cauchy-Schwarz and Young's inequalities, we have
\be\label{comp3}
2\tau\left(\f,\bv_N^{k+1}\right)\leq \frac{6\tau}{\beta}\left|\f\right|^2+\frac{\tau\beta}{6}\left|\bv_N^{k+1}\right|^2.
\ee
For the second term in the right-hand side of \eqref{comp2}, we write
\begin{multline}\label{comp3a}
	-2\tau \beta\left(I_h\left(\bv_N^{k+1}-\bu\left(t_{k+1}\right)\right),\bv_N^{k+1}\right)\\
	= -2\tau \beta\left(I_h\left(\bv_N^{k+1}\right),\bv_N^{k+1}\right)+2\tau \beta\left(I_h\left(\bu\left(t_{k+1}\right)\right),\bv_N^{k+1}\right)
\end{multline}
Applying \eqref{displ2} to the first term in the right-hand side of \eqref{comp3a} and \eqref{leftovl2} with $\alpha_0=6$ to the second, we obtain
\begin{multline}\label{comp4}
-2 \tau \beta \left(I_h\left(\bv_N^{k+1} - \bu(t_{k+1}) \right),\bv_N^{k+1}\right) \leq \tau \nu \left\|\bv_N^{k+1}\right\|^2 - \frac{2\tau\beta}{3}\left|\bv_N^{k+1}\right|^2\\
+ 6 \tau \beta |\bu(t_{k+1})|^2 + 6 \tau \nu \|\bu (t_{k+1})\|^2\\
\leq \tau \nu \left\|\bv_N^{k+1}\right\|^2-\frac{2\tau\beta}{3}\left|\bv_N^{k+1}\right|^2+6\tau\beta M_0^2+6\tau\nu M_1^2,
\end{multline}
where we used the uniform bounds of $\bu$ from Proposition \ref{propboundsu}. Plugging estimates \eqref{comp3} and \eqref{comp4} into \eqref{comp2}, we obtain, after applying Poincar\'e inequality \eqref{Poincare} and dropping the term $\left|\bv_N^{k+1}-\bv_N^{k}\right|^2$,
\be\label{comp4a}
\left(1+\tau\left(\beta/2+\nu\lambda_1\right)\right)\left|\bv_N^{k+1}\right|^2\leq\left|\bv_N^{k}\right|^2+\frac{6\tau}{\beta}\left|\f\right|^2+6\tau\beta M_0^2+6\tau\nu M_1^2, \quad \forall k \geq 0.
\ee
Now, \eqref{semil2bd} follows from \eqref{comp4a} and Lemma \ref{discretegronwall}.

In order to prove inequality \eqref{semih1bd}, we argue by induction. First, notice that \eqref{semih1bd} is trivially true for $n=0$. Now, let $n\in \mathbb{N}$ be fixed and suppose that
\[
\left\|\bv_N^k\right\|^2\leq \frac{\left\|\bv_{N,0}\right\|^2}{\left(1+\frac{\tau}{4}\left(\beta+\nu\lambda_1\right)\right)^k}+\frac{24\left|\f\right|^2}{\nu\left(\beta+\nu\lambda_1\right)}+\frac{32\beta M_1^2}{\beta+\nu\lambda_1}, \quad \forall k \in \{0,1,\dots,n\}.
\]
Using that $\bv_{N,0} \in B_V(M_1)$ and \eqref{boundf}, it follows in particular that
\be\label{comp4b}
 \left\|\bv_N^k\right\|\leq 6 M_1, \quad \forall k \in \{0,1,\ldots,n\}.
\ee
Now, taking the inner product of \eqref{semimp} with $2\tau A\bv_N^{k+1}$ in $H$, we obtain
\begin{multline}\label{comp6}
\left\|\bv_N^{k+1}\right\|^2+\left\|\bv_N^{k+1}-\bv_N^{k}\right\|^2-\left\|\bv_N^{k}\right\|^2+2\tau\nu\left|A\bv_N^{k+1}\right|^2=\\
2\tau\left(\f,A\bv_N^{k+1}\right)-2\tau\beta\left(I_h\left(\bv_N^{k+1}-\bu\left(t_{k+1}\right)\right),A\bv_N^{k+1}\right)-2\tau\left(B\left(\bv_N^k,\bv_N^{k+1}\right),A\bv_N^{k+1}\right).
\end{multline}
The first two terms on the right-hand side of \eqref{comp6} are handled similarly as in \eqref{comp3} and \eqref{comp4}, so that
\be\label{comp7}
2\tau\left(\f,A\bv_N^{k+1}\right)\leq\frac{6\tau}{\nu}\left|\f\right|^2+\frac{\tau\nu}{6}\left|A\bv_N^{k+1}\right|^2
\ee
and
\begin{multline}\label{comp8}
	- 2\tau \beta\left(I_h\left(\bv_N^{k+1}-\bu\left(t_{k+1}\right)\right),A\bv_N^{k+1}\right)\\
	= 2\tau \beta\left(I_h\left(\bu\left(t_{k+1}\right)\right),A\bv_N^{k+1}\right)-2\tau \beta\left(I_h\left(\bv_N^{k+1}\right),A\bv_N^{k+1}\right)\\
\leq\frac{\tau\beta}{2}\left\|\bv_N^{k+1}\right\|^2+\frac{\tau\nu}{6}\left|A\bv_N^{k+1}\right|^2+8\tau\beta M_1^2+\tau\nu\left|A\bv_N^{k+1}\right|^2-\tau\beta\left\|\bv_N^{k+1}\right\|^2\\
=\frac{7\tau\nu}{6}\left|A\bv_N^{k+1}\right|^2-\frac{\tau\beta}{2}\left\|\bv_N^{k+1}\right\|^2+8\tau\beta M_1^2,
\end{multline}
where we used \eqref{disph1} and \eqref{leftovh1} with $\alpha_0=2$ and $\alpha_1=6$. Plugging estimates \eqref{comp7} and \eqref{comp8} into \eqref{comp6}, we obtain, after dropping the term $\left\|\bv_N^{k+1}-\bv_N^{k}\right\|^2$,
\begin{multline}\label{compp9}
\left(1+\frac{\tau\beta}{2}\right)\left\|\bv_N^{k+1}\right\|^2+\frac{2\tau\nu}{3}\left|A\bv_N^{k+1}\right|^2\leq \left\|\bv_N^{k}\right\|^2+\frac{6\tau}{\nu}\left|\f\right|^2+8\tau\beta M_1^2\\
+2\tau\left|\left(B\left(\bv_N^k,\bv_N^{k+1}\right),A\bv_N^{k+1}\right)\right|.
\end{multline}
We now claim that
\be\label{comppp9}
\left(1+\frac{\tau\beta}{4}\right)\left\|\bv_N^{k+1}\right\|^2+\frac{\tau\nu}{4}\left|A\bv_N^{k+1}\right|^2\leq\left\|\bv_N^k\right\|^2+\frac{6\tau}{\nu}\left|\f\right|^2+8\tau\beta M_1^2 \quad \forall k\in\{0,1,\dots,n\}.
\ee
Indeed, if $\bv_N^{k+1}=0$ then \eqref{comppp9} is trivially true. Else, we use \eqref{ineqTiti2} to get
\begin{multline}\label{comp10}
2\tau\left|\left(B\left(\bv_N^k,\bv_N^{k+1}\right),A\bv_N^{k+1}\right)\right| \leq \\
\leq 2\tau c_T \left\|\bv_N^k\right\| \left\|\bv_N^{k+1}\right\| \left|A\bv_N^{k+1}\right| \left[1+\log\left(\frac{\left|A\bv_N^{k+1}\right|}{\lambda_1^{1/2}\left\|\bv_N^{k+1}\right\|}\right)\right]^{1/2}\\
\leq\frac{\tau\nu}{6}\left|A\bv_N^{k+1}\right|^2+\frac{c\tau}{\nu}\left\|\bv_N^k\right\|^2\left\|\bv_N^{k+1}\right\|^2\left[1+\log\left(\frac{\left|A\bv_N^{k+1}\right|^2}{\lambda_1\left\|\bv_N^{k+1}\right\|^2}\right)\right]\\
\leq\frac{\tau\nu}{6}\left|A\bv_N^{k+1}\right|^2+\frac{c\tau}{\nu}M_1^2\left\|\bv_N^{k+1}\right\|^2\left[1+\log\left(\frac{\left|A\bv_N^{k+1}\right|^2}{\lambda_1\left\|\bv_N^{k+1}\right\|^2}\right)\right],
\end{multline}
where we used \eqref{comp4b} in the last step. Plugging \eqref{comp10} into \eqref{compp9} and rearranging some terms, we obtain
\begin{multline}\label{comp11}
	\left(1+\frac{\tau\beta}{4}\right)\left\|\bv_N^{k+1}\right\|^2+\frac{\tau\nu}{4}\left|A\bv_N^{k+1}\right|^2 \\
	+ \frac{\tau}{4}\left\|\bv_N^{k+1}\right\|^2\left\{ \beta + \nu\lambda_1 \left\{ \frac{\left|A\bv_N^{k+1}\right|^2}{\lambda_1\left\|\bv_N^{k+1}\right\|^2} - \frac{cM_1^2}{\nu^2\lambda_1} \left[ 1 + \log\left(\frac{\left|A\bv_N^{k+1}\right|^2}{\lambda_1\left\|\bv_N^{k+1}\right\|^2}\right)\right]\right\}\right\}\\
	\leq\left\|\bv_N^k\right\|^2+\frac{6\tau}{\nu}\left|\f\right|^2+8\tau\beta M_1^2.
\end{multline}
Using that
\be\label{ineqminlog}
\min_{x\geq1} [x-\alpha\left(1+\log(x)\right)] \geq - \alpha\log(\alpha)
\ee
with
\[
	x = \frac{\left|A\bv_N^{k+1}\right|^2}{\lambda_1\left\|\bv_N^{k+1}\right\|^2}\geq1, \,\, \alpha = \frac{cM_1^2}{\nu^2\lambda_1}>0,
\]
we obtain that the fourth term on the left hand side of \eqref{comp11} is bounded from below by
\[
\frac{\tau}{4}\left\|\bv_N^{k+1}\right\|^2\left\{ \beta - \frac{cM_1^2}{\nu}\log\left(\frac{cM_1^2}{\nu^2\lambda_1}\right)\right\} \geq \frac{\tau}{4}\left\|\bv_N^{k+1}\right\|^2 \left\{ \beta - \frac{cM_1^2}{\nu}\left[ 1 + \log\left(\frac{M_1}{\nu\lambda_1^{1/2}}\right)\right]\right\},
\]
which is non-negative by assumption \eqref{propvcondbeta}, with a suitable absolute constant $c$. This proves \eqref{comppp9}.

Now, applying Poincar\'e inequality, \eqref{Poincare}, to the second term on the left-hand side of \eqref{comppp9} and using Lemma \ref{discretegronwall}, we obtain that
\[
\left\|\bv_N^{n+1}\right\|^2\leq \frac{\left\|\bv_{N,0}\right\|^2}{\left(1+\frac{\tau}{4}\left(\beta+\nu\lambda_1\right)\right)^{n+1}}+\frac{24\left|\f\right|^2}{\nu\left(\beta+\nu\lambda_1\right)}+\frac{32\beta M_1^2}{\beta+\nu\lambda_1},
\]
thereby closing the inductive argument.
\end{proof}

\begin{rmk}
	Notice that hypothesis \eqref{propvcondbeta} on $\beta$ is only needed in the proof of estimate \eqref{semih1bd}, but not \eqref{semil2bd}.\	
\end{rmk}

In the following theorem, we show that solutions of \eqref{semimp} depend continuously on the initial data.

\begin{thm}\label{dependonICsemi}
	Assume the hypotheses of Theorem \ref{semibounds} and suppose further that $\tau$ is chosen to satisfy $\tau \beta \leq 1$. Let $\bv_{N,0}, \overline{\bv}_{N,0} \in P_N H \cap B_V(M_1)$ be two different initial data. Let $\{\vn^{k}\}_{k\in\mathbb{N}}$ and $\{\overline{\bv}_N^k\}_{k\in \mathbb{N}}$ be the unique solutions of \eqref{semimp} corresponding to $I_h(\bu)$ and with initial conditions $\bv_{N,0}$ and $\overline{\bv}_{N,0}$, respectively.  Then,
	\be\label{l2decaysemi}
	\left|\overline{\bv}_N^n - \vn^n\right|^2 \leq \frac{\left|\overline{\bv}_{N,0} - \bv_{N,0} \right|^2}{\left(1+\frac{\tau}{4}\left(\beta+\nu\lambda_1\right)\right)^n}, \quad \forall n \in \mathbb{N},
	\ee
	and, consequently, $\lim_{n\rightarrow\infty} \left|\overline{\bv}_N^n - \vn^n\right|=0$.
\end{thm}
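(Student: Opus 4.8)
The plan is to track the difference $\bxi^k := \overline{\bv}_N^k - \bv_N^k$ in the $H$ norm and show it obeys a clean geometric recursion with \emph{no} forcing term, so that the discrete Gronwall Lemma \ref{discretegronwall} with $b_k\equiv 0$ gives \eqref{l2decaysemi} and hence the limit. First I would subtract the two copies of \eqref{semimp}; since both correspond to the same $I_h(\bu)$, the observation cancels and the feedback term becomes $-\beta P_N P_\sigma I_h(\bxi^{k+1})$, while bilinearity expands the nonlinear difference as $P_N B(\bxi^k,\overline{\bv}_N^{k+1}) + P_N B(\bv_N^k,\bxi^{k+1})$. Testing the resulting identity with $2\tau\bxi^{k+1}$ in $H$, using the polarization identity $2(a-b,a)=|a|^2+|a-b|^2-|b|^2$, the orthogonality \eqref{propBorthog2} to discard $(B(\bv_N^k,\bxi^{k+1}),\bxi^{k+1})$, and \eqref{displ2} for the feedback term, I arrive at
\begin{equation*}
(1+\tau\beta)\left|\bxi^{k+1}\right|^2 + \left|\bxi^{k+1}-\bxi^k\right|^2 + \tau\nu\left\|\bxi^{k+1}\right\|^2 \leq \left|\bxi^k\right|^2 - 2\tau\left(B\left(\bxi^k,\overline{\bv}_N^{k+1}\right),\bxi^{k+1}\right).
\end{equation*}
It is essential to keep both the full nudging gain $\tau\beta|\bxi^{k+1}|^2$ and the numerical dissipation $|\bxi^{k+1}-\bxi^k|^2$, rather than dropping the latter.

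The core of the argument is to absorb the trilinear term entirely into the $\bxi^{k+1}$ dissipation on the left, leaving $|\bxi^k|^2$ untouched on the right (this is what forces $b_k\equiv 0$, hence pure decay). To this end I would write $\bxi^k=\bxi^{k+1}-(\bxi^{k+1}-\bxi^k)$ and split accordingly. The diagonal piece $(B(\bxi^{k+1},\overline{\bv}_N^{k+1}),\bxi^{k+1})$ lives entirely at the current level: using the logarithmic inequality \eqref{ineqTiti1} with the uniform bound $\|\overline{\bv}_N^{k+1}\|\leq 6M_1$ from \eqref{semibdsimple}, Young's inequality, and the min--log estimate \eqref{ineqminlog}, it is dominated by a fraction of $\tau\nu\|\bxi^{k+1}\|^2$ and $\tau\beta|\bxi^{k+1}|^2$ precisely under the lower bound \eqref{propvcondbeta} on $\beta$, exactly mirroring the proof of the $V$-norm stability estimate \eqref{semih1bd}.

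The main obstacle is the remaining mismatch piece $(B(\bxi^{k+1}-\bxi^k,\overline{\bv}_N^{k+1}),\bxi^{k+1})$, in which the increment $\bxi^{k+1}-\bxi^k$ sits in the transport slot. No $\mD(A)$ bound on $\overline{\bv}_N^{k+1}$ uniform in $N$ is available, so I would use orthogonality \eqref{propBorthog1} to move the gradient onto $\bxi^{k+1}$ and then apply \eqref{estnonlineartermL2L4L4}, which retains $\bxi^{k+1}-\bxi^k$ only in $L^2$ (matching the numerical dissipation $|\bxi^{k+1}-\bxi^k|^2$) at the price of a factor $|A\bxi^{k+1}|^{1/2}$. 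I expect this to be the hard part: one route is to read $|A\bxi^{k+1}|$ off the scheme \eqref{semimp} itself, where $\nu\tau A\bxi^{k+1}$ equals $-(\bxi^{k+1}-\bxi^k)$ plus lower-order terms, and then to absorb the resulting contribution into $|\bxi^{k+1}-\bxi^k|^2$ together with a small share of the $\bxi^{k+1}$ dissipation. This absorption succeeds only after imposing the smallness $\tau\beta\leq 1$; this is the unique place the time-step restriction is used, and it is exactly what separates the semi-implicit case from the unconditional fully-implicit one.

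Combining the three estimates reduces the displayed inequality to $\left(1+\tfrac{\tau\beta}{4}\right)|\bxi^{k+1}|^2 + \tfrac{\tau\nu}{4}\|\bxi^{k+1}\|^2 \leq |\bxi^k|^2$. Applying the Poincar\'e inequality \eqref{Poincare} to convert $\tfrac{\tau\nu}{4}\|\bxi^{k+1}\|^2$ into $\tfrac{\tau\nu\lambda_1}{4}|\bxi^{k+1}|^2$ gives $\left(1+\tfrac{\tau}{4}(\beta+\nu\lambda_1)\right)|\bxi^{k+1}|^2 \leq |\bxi^k|^2$, and Lemma \ref{discretegronwall} with $b_k\equiv 0$ and $\gamma=\tfrac{\tau}{4}(\beta+\nu\lambda_1)$ yields \eqref{l2decaysemi}; the limit $\lim_{n\to\infty}|\overline{\bv}_N^n-\vn^n|=0$ is then immediate since $1+\gamma>1$.
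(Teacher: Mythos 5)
Your setup, the decomposition $\bxi^k=\bxi^{k+1}-(\bxi^{k+1}-\bxi^k)$, and the treatment of the diagonal piece via \eqref{ineqTiti1}, \eqref{semibdsimple}, \eqref{ineqminlog} and \eqref{propvcondbeta} all match the paper's proof. The gap is in the mismatch term $2\tau\left|\left(B\left(\bxi^{k}-\bxi^{k+1},\overline{\bv}_N^{k+1}\right),\bxi^{k+1}\right)\right|$, where you take a wrong turn. You assert that no usable $\mD(A)$ control of $\overline{\bv}_N^{k+1}$ is available and therefore flip the operator onto $\bxi^{k+1}$ via \eqref{propBorthog1}, planning to recover $|A\bxi^{k+1}|$ from the scheme itself. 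That route is circular: writing $\nu A\bxi^{k+1}$ from the difference of the two copies of \eqref{semimp} reintroduces $P_NB(\bxi^k,\overline{\bv}_N^{k+1})$ and $P_NB(\bv_N^k,\bxi^{k+1})$, whose $H$-norms (via \eqref{estnonlineartermL4L4L2}) again require exactly the $\mD(A)$ bounds on $\overline{\bv}_N^{k+1}$ or $\bxi^{k+1}$ you started without; moreover the resulting power of $|\bxi^{k+1}-\bxi^k|$ exceeds $2$ and cannot be absorbed by the single copy of $|\bxi^{k+1}-\bxi^k|^2$ on the left. You correctly sense that this is the hard part and that $\tau\beta\leq 1$ must enter here, but the absorption you describe is not actually carried out and, as sketched, would not close.

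The missing idea is the preliminary step of the paper's proof: the $V$-norm stability recursion \eqref{comppp9} extends to all $k\in\mathbb{N}$ (using \eqref{semibdsimple}), and retaining its term $\frac{\tau\nu}{4}|A\bv_N^{k+1}|^2$ yields the quantitative bound $\tau|A\bv_N^{k+1}|^2\leq c M_1^2/\nu\leq\beta$ by \eqref{propvcondbeta}; combined with $\tau\beta\leq 1$ this gives $\tau|A\bv_N^{k+1}|\leq 1$ (this, not the mismatch absorption per se, is where the time-step restriction is used). With that in hand one applies \eqref{estnonlineartermL2L4L4} \emph{directly} to the mismatch term, keeping $\bxi^{k+1}-\bxi^k$ in $L^2$ and letting the factor $\|\bv_N^{k+1}\|^{1/2}|A\bv_N^{k+1}|^{1/2}$ land on the \emph{known} solution; after Young's inequality the product $\tau^2\|\bv_N^{k+1}\||A\bv_N^{k+1}|$ collapses to $c\tau M_1$, and the remainder is absorbed exactly as for the diagonal piece. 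So while individual $|A\bv_N^{k+1}|$ are indeed not bounded uniformly in $\tau$, the combination $\tau|A\bv_N^{k+1}|$ is, and that is all the argument needs. Everything after this point in your outline (the reduction to $\left(1+\frac{\tau}{4}(\beta+\nu\lambda_1)\right)|\bxi^{k+1}|^2\leq|\bxi^k|^2$ and the application of Lemma \ref{discretegronwall} with $b_k\equiv 0$) is correct.
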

\begin{proof}
	First, notice that, using \eqref{semibdsimple} and similar arguments to the ones used in the proof of inequality \eqref{comppp9}, we can prove now that \eqref{comppp9} is valid for every $k \in \mathbb{N}$, i.e.,
	\be\label{comp30b}
	\left(1+\frac{\tau\beta}{4}\right)\left\|\bv_N^{k+1}\right\|^2+\frac{\tau\nu}{4}\left|A\bv_N^{k+1}\right|^2\leq\left\|\bv_N^k\right\|^2+\frac{6\tau}{\nu}\left|\f\right|^2+8\tau\beta M_1^2, \quad \forall k \in \mathbb{N}.
	\ee
	Dividing \eqref{comp30b} by $(1+\tau\beta/4)$, neglecting $\left\|\bv_N^{n+1}\right\|^2$ and using \eqref{boundf}, we obtain
	\be\label{abd1}
	\frac{\tau\nu}{4+\tau\beta}\left|A\bv_N^{k+1}\right|^2\leq\frac{\left\|\bv_N^n\right\|^2}{1+\frac{\tau\beta}{4}}+\frac{6\tau\nu\lambda_1}{1+\frac{\tau\beta}{4}}M_1^2+\frac{8\tau\beta M_1^2}{1+\frac{\tau\beta}{4}}.
	\ee	
	Using condition \eqref{propvcondbeta} on $\beta$ with an appropriate absolute constant $c$, we see that the second term on the right-hand side of \eqref{abd1} is bounded from above by $\nu^2\lambda_1\leq M_1^2$. Clearly, the last term is bounded by $32 M_1^2$. Moreover, using \eqref{semibdsimple} for estimating the first term, we obtain
	\be\label{comp30c}
	\frac{\tau\nu}{4+\tau\beta}\left|A\bv_N^{k+1}\right|^2\leq 69 M_1^2.
	\ee
	Multiplying \eqref{comp30c} by $5/\nu$ and using the hypothesis $\tau \beta \leq 1$, it follows that
	\be\label{comp30d}
	\tau \left|A\bv_N^{k+1}\right|^2 \leq 345 \frac{M_1^2}{\nu} \leq \beta, \quad \forall k \geq 0,
	\ee
	where in the last inequality we used condition \eqref{propvcondbeta} with a suitable absolute constant $c$.
	
	Now, from \eqref{semimp}, it follows that $\beps^k:= \overline{\bv}_N^k - \vn^k$ satisfies
	\be\label{diffsemi}
	\frac{\beps^{k+1}-\beps^k}{\tau}	+\nu A\beps^{k+1} + P_N B\left(\overline{\bv}_N^{k},\beps^{k+1}\right) + P_N B\left(\beps^{k},\bv_N^{k+1}\right)
	=-\beta P_N P_{\sigma}I_h\left(\beps^{k+1}\right).
	\ee
	Taking the inner product of \eqref{diffsemi} with $2\tau\beps^{k+1}$ in $H$, using the orthogonality property \eqref{propBorthog2} of $B$ and inequality \eqref{displ2}, we obtain that
	\begin{multline}\label{dec1}
		\left(1+\tau\beta\right)\left|\beps^{k+1}\right|^2+\left|\beps^{	k+1}-\beps^{k}\right|^2-\left|\beps^k\right|^2+\tau\nu\left\|\beps^{k+1}\right\|^2\leq 2\tau\left|\left(B\left(\beps^{k},\bv_N^{k+1}\right),\beps^{k+1}\right)\right|\\
		\leq 2\tau\left|\left(B\left(\beps^{k}-\beps^{k+1},\bv_N^{k+1}\right),\beps^{k+1}\right)\right|+2\tau\left|\left(B\left(\beps^{k+1},\bv_N^{k+1}\right),\beps^{k+1}\right)\right|.
	\end{multline}
	Using \eqref{estnonlineartermL2L4L4} and Young's inequality to estimate the first term on the right-hand side of \eqref{dec1}, we have
	\begin{multline*}
		2\tau\left|\left(B\left(\beps^{k}-\beps^{k+1},\bv_N^{k+1}\right),\beps^{k+1}\right)\right|\\
		\leq c\tau\left|\beps^{	k+1}-\beps^{k}\right|\left\|\bv_N^{k+1}\right\|^{1/2}\left|A\bv_N^{k+1}\right|^{1/2}\left|\beps^{k+1}\right|^{1/2}\left\|\beps^{k+1}\right\|^{1/2}\\
		\leq \left|\beps^{	k+1}-\beps^{k}\right|^2+c\tau^2\left\|\bv_N^{k+1}\right\|\left|A\bv_N^{k+1}\right|\left|\beps^{k+1}\right|\left\|\beps^{k+1}\right\|.
	\end{multline*}
	From \eqref{comp30d} and condition $\tau \beta \leq 1$, it follows that $\tau\left|A\bv_N^{k+1}\right|\leq1$. Using this along with \eqref{semibdsimple}, yields
	\begin{multline}\label{dec2}
		2\tau\left|\left(B\left(\beps^{k}-\beps^{k+1},\bv_N^{k+1}\right),\beps^{k+1}\right)\right|\leq \left|\beps^{	k+1}-\beps^{k}\right|^2+c\tau M_1\left|\beps^{k+1}\right|\left\|\beps^{k+1}\right\|\\
		\leq \left|\beps^{	k+1}-\beps^{k}\right|^2+\frac{\tau\nu}{4}\left\|\beps^{k+1}\right\|^2+\frac{c\tau M_1^2}{\nu}\left|\beps^{k+1}\right|^2.
	\end{multline}
	For the second term in the right-hand side of \eqref{dec1}, we use \eqref{ineqTiti1}, Young's inequality and \eqref{semibdsimple} to obtain
	\begin{multline}\label{dec3}
		2\tau\left|\left(B\left(\beps^{k+1},\bv_N^{k+1}\right),\beps^{k+1}\right)\right|\\
		\leq \frac{\tau\nu}{4}\left\|\beps^{k+1}\right\|^2+\frac{c\tau M_1^2}{\nu}\left|\beps^{k+1}\right|^2\left[1+\log\left(\frac{\left\|\beps^{k+1}\right\|^2}{\lambda_1\left|\beps^{k+1}\right|^2}\right)\right].
	\end{multline}
	Notice that the last term in \eqref{dec2} is bounded from above by the last term in \eqref{dec3}. Thus, after plugging \eqref{dec2} and \eqref{dec3} into \eqref{dec1}, proceeding as in the proof of inequality \eqref{comppp9} and applying Poincar\'e inequality \eqref{Poincare}, we obtain that
	\be\label{dec4}
	\left(1+\frac{\tau}{4}\left(\beta+\nu\lambda_1\right)\right)\left|\beps^{k+1}\right|^2\leq\left|\beps^{k}\right|^2 \quad \forall k \geq 0.
	\ee
	We conclude the proof by using Lemma \ref{discretegronwall}.
\end{proof}
\begin{rmk}\leavevmode
\begin{enumerate}[(i)]
	\item We notice that it is sufficient to assume a weaker condition on $\tau$, namely, $\tau\beta\leq\Lambda$, where $\Lambda$ is as defined in Proposition \ref{propboundsu}. Nevertheless, we prefer the assumption $\tau\beta\leq1$ for the sake of simplifying the calculations.
	\item We also point out that one can obtain continuous dependence on initial data by using a slightly more general version of Lemma \ref{discretegronwall}. Even though this allows us to eliminate the smallness assumption on the time step, it yields a constant that grows with respect to the number of time steps, as opposed to the decay observed in \eqref{l2decaysemi}.
	\end{enumerate}
\end{rmk}


We now proceed to obtaining error estimates, in the $H$ and $V$ norms, between a solution of \eqref{semimp} and the corresponding continuous in time solution of \eqref{eqDA}. For these proofs, we need to use the uniform bounds of $\rd \bu/\rd t$ and $\rd \vn/\rd t$ from Propositions \ref{propboundsu} and \ref{propboundtimederv}.

\begin{thm}\label{l2convsemi}
Assume hypotheses \eqref{A1}-\eqref{A4} and suppose that $\bu$ satisfies, in addition, bound \eqref{boundderu} for $t\geq0$. Consider $\bv_{N,0} \in P_N H \cap B_V(M_1)$ and let $\vn$ and $\{\bv_N^k\}_{k\in \mathbb{N}}$ be the unique solutions of \eqref{eqDAGalerkin} and \eqref{semimp}, respectively, corresponding to $I_h(\bu)$ and satisfying $\vn(0) = \bv_{N,0} = \bv_N^0$. Let $n_0:= \left \lceil{T_1/\tau}\right \rceil$, with $T_1$ as given in Proposition \ref{propboundtimederv}. Then, for every $n \in \mathbb{N}$ with $n \geq n_0$,
\be\label{semil2errorest}
\left|\bv_N^n - \vn(t_n) \right|^2 \leq \frac{\left|\bv_N^{n_0} - \vn(t_{n_0})\right|^2}{\left(1+\frac{\tau}{4}\left(\beta+\nu\lambda_1\right)\right)^{n-n_0}}+c\tau^2\lambda_1^{-1}R_1^2.
\ee
\end{thm}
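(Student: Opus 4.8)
The plan is to set $e_k := \bv_N^k - \vn(t_k)$ and to derive, for indices past the relaxation time, a one-step recursion of the form $\left(1+\tfrac{\tau}{4}(\beta+\nu\lambda_1)\right)|e_{k+1}|^2 \le |e_k|^2 + c\,\tau^3\lambda_1^{-1}(\beta+\nu\lambda_1)R_1^2$, to which the discrete Gronwall Lemma \ref{discretegronwall} in the form \eqref{grin2} applies and yields \eqref{semil2errorest} (note $\tfrac1\gamma b_k = 4c\,\tau^2\lambda_1^{-1}R_1^2$ with $\gamma=\tfrac\tau4(\beta+\nu\lambda_1)$). Since the uniform bounds \eqref{boundsv} on $\vn$ and $\rd\vn/\rd t$ hold only for $t\ge T_1$, the recursion is established for $k\ge n_0$ and the Gronwall argument is started from $n_0$.

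A key decision is to compare \eqref{semimp} not with \eqref{eqDAGalerkin} evaluated pointwise at $t_{k+1}$, but with \eqref{eqDAGalerkin} integrated over $[t_k,t_{k+1}]$; multiplying \eqref{semimp} by $\tau$ and subtracting, the increment $\bv_N^{k+1}-\bv_N^k$ matches $\vn(t_{k+1})-\vn(t_k)$ exactly, so that all the consistency error is transferred to the viscous, nonlinear and nudging terms, each in the form $\int_{t_k}^{t_{k+1}}(\,\text{value at }t_{k+1}-\text{value at }s\,)\,\rd s$. Each such difference is genuinely $O(\tau^2)$ and, decisively, involves only the first time derivative: writing $\vn(t_{k+1})-\vn(s)=\int_s^{t_{k+1}}\tfrac{\rd\vn}{\rd t}\rd r$ and, for the viscous truncation, using self-adjointness of $A$ to pair it as $\nu\,(\!(\vn(t_{k+1})-\vn(s),e_{k+1})\!)$, one controls everything by $\|\rd\vn/\rd t\|\le c_7 R_1$ and $\|\rd\bu/\rd t\|\le R_1$ (bound \eqref{boundderu}) rather than by $|A\,\rd\vn/\rd t|\le c_9 R_2$. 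This is exactly why the final constant depends on $R_1$ and not on $R_2$, and it is here that the time independence of $\f$ is used, namely to cancel the forcing terms.

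I would then test the error equation with $2e_{k+1}$ in $H$, use $2(e_{k+1}-e_k,e_{k+1})=|e_{k+1}|^2+|e_{k+1}-e_k|^2-|e_k|^2$, and exploit the orthogonality \eqref{propBorthog2} of $B$, which annihilates $(B(\bv_N^k,e_{k+1}),e_{k+1})$ and $(B(e_k,e_{k+1}),e_{k+1})$. The nudging inequality \eqref{displ2} (valid under \eqref{propvcondbetah}) supplies the dissipative pair $-\tau\beta|e_{k+1}|^2+\tau\nu\|e_{k+1}\|^2$, leaving a net $\tau\nu\|e_{k+1}\|^2$ of viscous dissipation together with $(1+\tau\beta)|e_{k+1}|^2$ and $|e_{k+1}-e_k|^2$ on the left. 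The truncation terms are bounded by $\tau\sup\|\rd\vn/\rd t\|$-type estimates, property \eqref{propIh}, and $\|\vn\|\le 8M_1$, then absorbed by Young's inequality into $\tfrac{\tau\nu}{4}\|e_{k+1}\|^2$ and $\tfrac{\tau\beta}{4}|e_{k+1}|^2$; each leaves an $O(\tau^3R_1^2)$ remainder whose aggregate is $c\,\tau^3\lambda_1^{-1}(\beta+\nu\lambda_1)R_1^2$.

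The crux is the nonlinear error-propagation term $\tau(B(e_k,\vn(t_{k+1})),e_{k+1})$, which survives the cancellations. I would treat it as in the proof of Theorem \ref{dependonICsemi}: split $e_k=e_{k+1}+(e_k-e_{k+1})$, estimate $(B(e_{k+1},\vn(t_{k+1})),e_{k+1})$ by the logarithmic inequality \eqref{ineqTiti1} and absorb the resulting $\tfrac{cM_1^2}{\nu}|e_{k+1}|^2[1+\log(\cdots)]$ via \eqref{ineqminlog} together with the largeness \eqref{propvcondbeta} of $\beta$, while $(B(e_k-e_{k+1},\vn(t_{k+1})),e_{k+1})$ is handled by \eqref{estnonlineartermL2L4L4}, absorbing $|e_k-e_{k+1}|^2$ into the term on the left. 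The main obstacle is to carry out this last absorption so that the final constant is an \emph{absolute} multiple of $\tau^2\lambda_1^{-1}R_1^2$: the increment sub-term produces a factor $|A\vn(t_{k+1})|\le c_8M_2$, so one must exploit the relations between $\beta$, $M_1$, $M_2$ and $R_1$ implied by \eqref{propvcondbeta} (and the control of $\tau|A\vn(t_{k+1})|$, as in Theorem \ref{dependonICsemi}) to keep the remainder absorbable with no spurious dependence on $N$ or growth in the number of time steps. Assembling all bounds gives the recursion, and \eqref{grin2} finishes the proof.
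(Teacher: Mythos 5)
Your proposal follows essentially the same route as the paper's proof: integrating \eqref{eqDAGalerkin} over $[t_k,t_{k+1}]$ so the difference quotient cancels exactly, testing the error equation with $2\tau\bd^{k+1}$, invoking \eqref{propBorthog2} and \eqref{displ2}, bounding the truncation terms via the $R_1$-Lipschitz bounds, splitting the surviving term $2\tau(B(\bd^k,\vn(t_{k+1})),\bd^{k+1})$ into a $\bd^{k+1}$ part (handled by \eqref{ineqTiti1} plus the \eqref{ineqminlog}/\eqref{propvcondbeta} absorption) and an increment part (handled by \eqref{estnonlineartermL2L4L4}, with the $M_2$ factor tamed by Young's inequality and \eqref{propvcondbeta}, yielding a remainder $c\tau^3 M_1^6/\nu = c\tau^3\nu R_1^2/\Lambda^2$), and concluding with Lemma \ref{discretegronwall}. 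The only caveat is your parenthetical appeal to the bound on $\tau|A\vn(t_{k+1})|$ from Theorem \ref{dependonICsemi}, which there relies on the extra hypothesis $\tau\beta\leq 1$ not assumed here; but since your primary mechanism is the Young/\eqref{propvcondbeta} absorption, which is exactly what the paper uses, the argument stands.
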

\begin{proof}
Let $k\in\mathbb{N}$ be fixed. Integrating equation \eqref{eqDAGalerkin} over $[t_k,t_{k+1}]$ and dividing by $\tau$, we obtain

\begin{multline}\label{comp12a}
  \frac{\bv_N\left(t_{k+1}\right) - \bv_N\left(t_k\right)}{\tau} + \frac{\nu}{\tau} \int_{t_k}^{t_{k+1}} A \bv_N(s) \rd s +\frac{1}{\tau}\int_{t_k}^{t_{k+1}}P_N B\left(\bv_N(s),\bv_N(s)\right) \rd s\\
  = P_N\f - \frac{\beta}{\tau} \int_{t_k}^{t_{k+1}} P_N P_\sigma I_h\left(\bv_N(s)-\bu(s)\right)\rd s.
\end{multline}

We rewrite some of the terms as follows:
\be\label{comp12b}
	A\vn\left(s\right)=A\vn\left(t_{k+1}\right)+A\left(\vn\left(s\right)-\vn\left(t_{k+1}\right)\right),
\ee
\begin{multline}\label{comp12c}
	B\left(\vn\left(s\right),\vn\left(s\right)\right)=B\left(\vn\left(t_k\right),\vn\left(t_{k+1}\right)\right)\\
	+B\left(\vn\left(t_k\right),\vn\left(s\right)-\vn\left(t_{k+1}\right)\right)+B\left(\vn\left(s\right)-\vn\left(t_k\right),\vn\left(s\right)\right),
\end{multline}
\begin{multline}\label{comp12d}
 P_NP_{\sigma}I_h\left(\vn(s)-\bu(s)\right) = P_NP_{\sigma}I_h\left(\vn\left(t_{k+1}\right)-\bu\left(t_{k+1}\right)\right) +\\
 P_NP_{\sigma}I_h\left(\vn(s)-\vn\left(t_{k+1}\right)\right)
  + P_NP_{\sigma}I_h\left(\bu\left(t_{k+1}\right)-\bu(s)\right).
\end{multline}

Hence, we obtain
\begin{multline}\label{comp12}
  \frac{\bv_N\left(t_{k+1}\right) - \bv_N\left(t_k\right)}{\tau} +\nu A \vn\left(t_{k+1}\right)+P_NB\left(\vn\left(t_k\right),\vn\left(t_{k+1}\right)\right)=\\
  P_N\f-\beta P_NP_{\sigma}I_h\left(\vn\left(t_{k+1}\right)-\bu\left(t_{k+1}\right)\right)\\
  -\frac{\beta}{\tau}\int_{t_k}^{t_{k+1}}P_N\left[P_{\sigma}I_h\left(\vn\left(s\right)-\vn\left(t_{k+1}\right)\right)
	+P_{\sigma}I_h\left(\bu\left(t_{k+1}\right)-\bu\left(s\right)\right)\right]\rd s\\
	-\frac{\nu}{\tau} \int_{t_k}^{t_{k+1}} A\left(\vn\left(s\right)-\vn\left(t_{k+1}\right)\right) \rd s \\
	-\frac{1}{\tau}\int_{t_k}^{t_{k+1}}P_N\left[B\left(\vn\left(t_k\right),\vn\left(s\right)-\vn\left(t_{k+1}\right)\right)+B\left(\vn\left(s\right)-\vn\left(t_k\right),\vn\left(s\right)\right)\right]\rd s.
\end{multline}

Subtracting \eqref{comp12} from \eqref{semimp} and writing
\[
 B\left(\vn^{k},\vn^{k+1}\right)-B\left(\vn\left(t_k\right),\vn\left(t_{k+1}\right)\right)=B\left(\vn^k,\bd^{k+1}\right)+B\left(\bd^k,\vn\left(t_{k+1}\right)\right),
\]
we see that the error $\bd^k:= \vn^k - \vn(t_k)$ evolves according to
\begin{multline}\label{comp13}
 \frac{\bd^{k+1} - \bd^k}{\tau} + \nu A \bd^{k+1}=
  \frac{\nu}{\tau} \int_{t_k}^{t_{k+1}} A\left(\bv_N(s) - \bv_N\left(t_{k+1}\right)\right) \rd s\\
 -P_N\left[B\left(\vn^k,\bd^{k+1}\right)+B\left(\bd^k,\vn\left(t_{k+1}\right)\right)\right]+ \\
 + \frac{1}{\tau} \int_{t_k}^{t_{k+1}} P_N \left[B\left(\vn\left(t_k\right),\vn(s)-\vn\left(t_{k+1}\right)\right)+B\left(\vn(s) - \vn\left(t_{k}\right),\vn(s)\right)\right] \rd s \\
 + \frac{\beta}{\tau} \int_{t_k}^{t_{k+1}}  P_N P_\sigma I_h\left(\bv_N(s) - \bv_N\left(t_{k+1}\right)\right) \rd s + \frac{\beta}{\tau} \int_{t_k}^{t_{k+1}}  P_NP_\sigma I_h\left(\bu\left(t_{k+1}\right)-\bu(s)\right) \rd s\\
 -\beta P_N P_{\sigma}I_h\left(\bd^{k+1}\right).
\end{multline}

Taking the inner product of \eqref{comp13} with $2\tau\bd^{k+1}$ for $k\geq n_0$ in $H$, using orthogonality property \eqref{propBorthog2} of the bilinear term and \eqref{displ2}, we obtain
\begin{multline}\label{comp14}
	(1+\tau\beta)\left|\bd^{k+1}\right|^2+\left|\bd^{k+1}-\bd^k\right|^2-\left|\bd^k\right|^2+\tau\nu \left\|\bd^{k+1}\right\|^2\\
	\leq 2 \tau \left| \left(B\left(\bd^k,\vn\left(t_{k+1}\right)\right),\bd^{k+1}\right) \right| + 2 \nu \int_{t_k}^{t_{k+1}} \left| \left( A\left(\bv_N\left(s\right) - \bv_N\left(t_{k+1}\right)\right),\bd^{k+1}\right) \right| \rd s \\
	+ 2\int_{t_k}^{t_{k+1}}\left| \left(B\left(\vn\left(t_k\right),\vn\left(s\right)-\vn\left(t_{k+1}\right)\right),\bd^{k+1}\right)\right| \rd s \\
	 + 2\int_{t_k}^{t_{k+1}} \left| \left( B\left(\vn(s) - \vn\left(t_{k}\right) , \vn(s) \right), \bd^{k+1}\right) \right| \rd s\\
	+ 2 \beta \int_{t_k}^{t_{k+1}} \left| \left(I_h\left(\bv_N\left(s\right) - \bv_N\left(t_{k+1}\right)\right),\bd^{k+1}\right) \right| \rd s \\
	+ 2 \beta \int_{t_k}^{t_{k+1}} \left| \left(I_h\left(\bu\left(t_{k+1}\right) - \bu(s)\right),\bd^{k+1}\right) \right| \rd s.
\end{multline}
We proceed to bound each term in the right-hand side of inequality \eqref{comp14}. First, notice that
\begin{multline}\label{comp15}
	2 \nu \int_{t_k}^{t_{k+1}} \left|\left(A\left(\bv_N(s) - \bv_N\left(t_{k+1}\right)\right),\bd^{k+1}\right)\right| \rd s\\\leq 2 \nu \left\| \bd^{k+1} \right\| \int_{t_k}^{t_{k+1}}\left\| \bv(s)-\vn\left( t_{k+1} \right) \right\| \rd s \\
	\leq 2 \nu \left\| \bd^{k+1} \right\| \tau \int_{t_k}^{t_{k+1}} \left\| \frac{\rd \vn}{\rd s}(s) \right\| \rd s
	\leq c  \nu \left\| \bd^{k+1} \right\| \tau^2 R_1 \leq \frac{\tau\nu}{4} \left\|\bd^{k+1}\right\|^2 + c\tau^3\nu R_1^2,
\end{multline}
where we used the fact that $\bv_N(s)$ is globally Lipschitz in time (with respect to the $V$ norm) with a Lipschitz constant $c_7R_1$ (with $c_7$ being an absolute constant independent on any physical parameter, cf. Proposition \ref{propboundtimederv}) for $t\geq T_1$. As in the proof of \eqref{comppp9}, we assume, without loss of generality, that $\bd^{k+1} \neq 0$ and estimate the third and fourth terms in the right-hand side of \eqref{comp14}, using \eqref{ineqTiti1} and Proposition \ref{propboundtimederv}, by
\begin{multline}\label{comp16}
	 2\int_{t_k}^{t_{k+1}}\left| \left(B\left(\vn\left(t_k\right),\vn\left(s\right)-\vn\left(t_{k+1}\right)\right),\bd^{k+1}\right)\right| \rd s \\
	 + 2\int_{t_k}^{t_{k+1}} \left| \left( B\left(\vn(s) - \vn\left(t_{k}\right) , \vn(s) \right), \bd^{k+1}\right) \right| \rd s \\
	\leq c\int_{t_k}^{t_{k+1}}\left\|\vn\left(t_k\right)\right\|\left\|\vn\left(s\right)-\vn\left(t_{k+1}\right)\right\|\left|\bd^{k+1}\right|\left[ 1 + \log\left( \frac{\left\|\bd^{n+1}\right\|}{\lambda_1^{1/2}\left|\bd^{k+1}\right|}\right) \right]^{1/2}\rd s\\
	\leq\tau\left(\tau R_1\right)\left(cM_1\left|\bd^{k+1}\right|\left[ 1 + \log\left( \frac{\left\|\bd^{k+1}\right\|^2}{\lambda_1\left|\bd^{k+1}\right|^2}\right) \right]^{1/2}\right)\\
	\leq \frac{\tau M_1^2}{\nu}\left|\bd^{k+1}\right|^2\left[ 1 + \log\left( \frac{\left\|\bd^{k+1}\right\|^2}{\lambda_1\left|\bd^{k+1}\right|^2}\right) \right]+c\tau^3\nu R_1^2.
\end{multline}
Using \eqref{leftovl2} with $\alpha_0 = 10$ along with the Poincar\'e inequality \eqref{Poincare}, we obtain
\begin{multline}\label{comp17}
	2\beta\int_{t_k}^{t_{k+1}} \left|\left(I_h\left(\bv_N(s) - \bv_N\left(t_{k+1}\right)\right),\bd^{k+1}\right)\right|\rd s\\
	\leq \int_{t_k}^{t_{k+1}} \left( \frac{\beta}{5}\left|\bd^{k+1}\right|^2 + c \left(\beta+\nu\lambda_1\right)\lambda_1^{-1}\left\|\bv_N\left(s\right) - \bv_N\left(t_{k+1}\right)\right\|^2 \right) \rd s\\
	\leq\frac{\tau\beta}{5}\left|\bd^{k+1}\right|^2+c\tau^3\left(\beta+\nu\lambda_1\right)\lambda_1^{-1}R_1^2.
\end{multline}
Similarly, using the global Lipschitz property in time of $\bu(s)$ from Proposition \ref{propboundsu}, we obtain
\be
  2 \beta \int_{t_k}^{t_{k+1}} \left|\left( I_h\left( \bu\left(t_{k+1}\right) - \bu(s) \right),\bd^{k+1}\right)\right| \rd s \leq \frac{\tau\beta}{5}\left|\bd^{k+1}\right|^2+c\tau^3\left(\beta+\nu\lambda_1\right)\lambda_1^{-1}R_1^2.
\ee
For the first term in the right-hand side of \eqref{comp14}, we write
\begin{multline}\label{comp17a}
	2\tau\left|\left(B\left(\bd^k,\vn\left(t_{k+1}\right)\right),\bd^{k+1}\right)\right|\leq2\tau\left|\left(B\left(\bd^k-\bd^{k+1},\vn\left(t_{k+1}\right)\right),\bd^{k+1}\right)\right|\\
	+2\tau\left|\left(B\left(\bd^{k+1},\vn\left(t_{k+1}\right)\right),\bd^{k+1}\right)\right|.
\end{multline}
Using \eqref{ineqTiti1} and Young's inequality, we obtain
\begin{multline}\label{commonBilinear}
2\tau\left|\left(B\left(\bd^{k+1},\vn\left(t_{k+1}\right)\right),\bd^{k+1}\right)\right|\\
\leq \frac{c\tau M_1^2}{\nu}\left|\bd^{k+1}\right|^2\left[ 1 + \log\left( \frac{\left\|\bd^{k+1}\right\|^2}{\lambda_1\left|\bd^{k+1}\right|^2}\right) \right]+\frac{\tau\nu}{4}\left\|\bd^{k+1}\right\|^2.
\end{multline}
For the other term in the right-hand side of \eqref{comp17a}, we use estimate \eqref{estnonlineartermL2L4L4}, along with the bounds from \eqref{boundsv} and \eqref{semibdsimple}, to obtain
\begin{multline}\label{comp18}
	2 \tau \left|\left(B\left(\bd^k-\bd^{k+1},\vn\left(t_{k+1}\right)\right),\bd^{k+1}\right)\right|\\
	\leq 2\left(\left|\bd^k-\bd^{k+1}\right|\right)\left(c\tau\left\|\bv_N\left(t_{k+1}\right)\right\|^{1/2}\left|A\bv_N\left(t_{k+1}\right)\right|^{1/2}\left\|\bd^{k+1}\right\|^{1/2}\left|\bd^{k+1}\right|^{1/2}\right)\\
	\leq \left|\bd^k-\bd^{k+1}\right|^2+c\tau^2\left\|\bv_N\left(t_{k+1}\right)\right\|\left|A\bv_N\left(t_{k+1}\right)\right|\left\|\bd^{k+1}\right\|\left|\bd^{k+1}\right|\\
	\leq \left|\bd^k-\bd^{k+1}\right|^2+c\tau^2\frac{M_1^3}{\nu^{1/2}}\left(\frac{M_1\Lambda^{1/2}}{\nu^{1/2}}+\beta^{1/2}\right)\left|\bd^{k+1}\right| \\
	\leq\left|\bd^k-\bd^{k+1}\right|^2+\frac{\tau\beta}{10}\left|\bd^{k+1}\right|^2+c\tau^3\frac{M_1^6}{\nu},
\end{multline}
where we used condition \eqref{propvcondbeta} in the last inequality with an appropriately chosen $c$ along with Young's inequality.

Plugging estimates \eqref{comp15}-\eqref{comp18} into \eqref{comp14}, we obtain, after collecting like terms,
\begin{multline*}
	\left(1+\frac{\tau\beta}{2}\right)\left|\bd^{k+1}\right|^2+\left|\bd^k-\bd^{k+1}\right|^2-\left|\bd^k\right|^2+\frac{\tau\nu}{2}\left\|\bd^{k+1}\right\|^2\\
	-\frac{c\tau M_1^2}{\nu}\left|\bd^{k+1}\right|^2\left[ 1 + \log\left(\frac{\left\|\bd^{k+1}\right\|^2}{\lambda_1\left|\bd^{k+1}\right|^2}\right) \right]\\
	\leq\left|\bd^k-\bd^{k+1}\right|^2+c\tau^3(\beta+\nu\lambda_1)\lambda_1^{-1}R_1^2+c\tau^3\frac{M_1^6}{\nu}.
\end{multline*}
Proceeding similarly as in the proof of inequality \eqref{comppp9}, we obtain
\be\label{comp19}
\left(1+\frac{\tau}{4}\left(\beta+\nu\lambda_1\right)\right)\left|\bd^{k+1}\right|^2\leq\left|\bd^k\right|^2+c\tau^3\left(\beta+\nu\lambda_1\right)\lambda_1^{-1}R_1^2+c\tau^3\frac{M_1^6}{\nu}, \quad \forall  k\geq n_0.
\ee
Finally, \eqref{semil2errorest} follows from Lemma \ref{discretegronwall} and by noting that
\[
\frac{M_1^6}{\nu}=c\frac{\nu R_1^2}{\Lambda^2}.
\]
\end{proof}
\begin{thm}\label{h1errorsemi}
	Assume the hypotheses of Theorem \ref{l2convsemi}. Then, we have the following estimate, for every $n \in \mathbb{N}$ with $n\geq n_0$:
	\begin{multline}\label{semih1errorest}
		\left\|\vn^n - \vn(t_n)\right\|^2\leq \frac{\left\|\vn^{n_0} - \vn(t_{n_0})\right\|^2}{\left(1+\frac{\tau}{4}\left(\beta+\nu\lambda_1\right)\right)^{n-n_0}}\\
		+\frac{c\tau M_2^2}{\nu}\left(\Lambda^{-1}+\tau\frac{M_1^2}{\nu}\right)\frac{\left(n-n_0\right)}{\left(1+\frac{\tau}{4}\left(\beta+\nu\lambda_1\right)\right)^{n-n_0}}\left|\vn^{n_0} - \vn(t_{n_0})\right|^2\\
		+ \frac{c\tau^2\nu R_2^2}{\beta+\nu\lambda_1} \left\{ 1+\frac{R_1^2}{\nu R_2^2}\left[\beta+\frac{M_1^2}{\nu}\left(1+\log\left(\frac{M_2}{\lambda_1^{1/2}M_1}\right)\right)\right] \right. \\
	\left. +\frac{M_2^2R_1^2}{\lambda_1\nu^2R_2^2}\left(\tau\frac{M_1^2}{\nu}+\Lambda^{-1}\right)\right\}.
\end{multline}
\end{thm}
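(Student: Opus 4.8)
The plan is to repeat the argument of Theorem~\ref{l2convsemi}, but now testing the error equation \eqref{comp13} for the error $\bd^k = \vn^k - \vn(t_k)$ against $2\tau A\bd^{k+1}$ instead of $2\tau\bd^{k+1}$, so as to control the $V$ norm of $\bd^k$. Taking this inner product in $H$ for $k \ge n_0$, the Hilbert-space identity (applied in the $V$ inner product, using self-adjointness of $A$) produces $\|\bd^{k+1}\|^2 + \|\bd^{k+1}-\bd^k\|^2 - \|\bd^k\|^2$ on the left; the viscous term gives $2\tau\nu|A\bd^{k+1}|^2$; and the feedback-control term is handled by \eqref{disph1}, contributing the extra dissipation $-\tau\beta\|\bd^{k+1}\|^2$. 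After rearranging, the left-hand side becomes $(1+\tau\beta)\|\bd^{k+1}\|^2 + \|\bd^{k+1}-\bd^k\|^2 - \|\bd^k\|^2 + \tau\nu|A\bd^{k+1}|^2$, while the right-hand side is the exact analogue of \eqref{comp14} with each term paired against $A\bd^{k+1}$.

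First I would dispose of the five ``consistency'' groups, mirroring \eqref{comp15}--\eqref{comp18} but invoking the higher-order bounds of Proposition~\ref{propboundtimederv}. The viscous consistency term is controlled by the Lipschitz-in-$\mathcal{D}(A)$ estimate $|A(\vn(s)-\vn(t_{k+1}))| \le \tau c_9 R_2$, producing the leading driving contribution $c\tau^3\nu R_2^2$. The two interpolant consistency terms are treated with \eqref{leftovh1} together with the Lipschitz-in-$V$ bounds on $\vn$ and $\bu$ (the latter from \eqref{boundderu}), producing $c\tau^3\beta R_1^2$. The two nonlinear consistency groups are bounded with the logarithmic inequalities \eqref{ineqBG} and \eqref{ineqTiti2}, applied so that the logarithm always falls on the \emph{full} Galerkin solution $\vn$ (never on a difference), whose ratio $|A\vn|/(\lambda_1^{1/2}\|\vn\|)$ is physical. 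The decisive device here is that the coefficient $\|\vn\|\,[1+\log(|A\vn|/(\lambda_1^{1/2}\|\vn\|))]^{1/2}$ is bounded, in an $N$-independent way, by $cM_1[1+\log(M_2/(\lambda_1^{1/2}M_1))]^{1/2}$, using the monotonicity of $y\mapsto y[1+\log(C/y)]^{1/2}$ together with the a priori bounds $\|\vn\|\le 8M_1$ and $|A\vn|\le c_8 M_2$. This yields the logarithmic driving term $\tfrac{c\tau^3 M_1^2 R_1^2}{\nu}(1+\log(M_2/(\lambda_1^{1/2}M_1)))$ and is the source of the logarithm in \eqref{semih1errorest}.

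The genuinely new feature is the main nonlinear error term $2\tau|(B(\bd^k,\vn(t_{k+1})),A\bd^{k+1})|$. As in \eqref{comp17a} I would split $\bd^k = \bd^{k+1} + (\bd^k-\bd^{k+1})$. The piece $B(\bd^{k+1},\vn(t_{k+1}))$, estimated by \eqref{ineqTiti2} and Young's inequality, gives $\tfrac{c\tau M_1^2}{\nu}(1+\log(M_2/(\lambda_1^{1/2}M_1)))\|\bd^{k+1}\|^2$, which is absorbed by $\tau\beta\|\bd^{k+1}\|^2$ by virtue of \eqref{propvcondbeta} with a suitable absolute constant (exactly as in the derivation of \eqref{comppp9}). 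The piece $B(\bd^k-\bd^{k+1},\vn(t_{k+1}))$, estimated by \eqref{estnonlineartermL4L4L2} and Young's inequality, absorbs $\|\bd^{k+1}-\bd^k\|^2$ and part of $\tau\nu|A\bd^{k+1}|^2$, and leaves a residual proportional to $|\bd^{k+1}|^2$ with coefficient $\kappa := \tfrac{c\tau M_2^2}{\nu}(\Lambda^{-1}+\tau\tfrac{M_1^2}{\nu})$. Because $\kappa$ carries the factor $M_2^2 = |A\vn|^2$, it is too large to be absorbed into the gradient dissipation and must be retained as a forcing. Collecting the estimates and proceeding as in the derivation of \eqref{comp19}, I obtain a recursion of the form $(1+\tfrac{\tau}{4}(\beta+\nu\lambda_1))\|\bd^{k+1}\|^2 \le \|\bd^k\|^2 + \kappa|\bd^{k+1}|^2 + D$ for all $k\ge n_0$, where $D$ collects the $O(\tau^3)$ driving contributions identified above.

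Finally, I would apply the discrete Gronwall inequality \eqref{grin1} with $\gamma = \tfrac{\tau}{4}(\beta+\nu\lambda_1)$. The constant forcing $D$ contributes, after division by $\gamma$, the bracketed driving terms of \eqref{semih1errorest}. The forcing $\kappa|\bd^{k+1}|^2$ is controlled by substituting the already-established $L^2$ error estimate \eqref{semil2errorest}, namely $|\bd^m|^2 \le |\bd^{n_0}|^2(1+\gamma)^{-(m-n_0)} + c\tau^2\lambda_1^{-1}R_1^2$. Its decaying part, inserted into the Gronwall convolution $\sum_j \kappa(1+\gamma)^{-(m-j)}|\bd^{n_0+j+1}|^2$, \emph{resonates} with the homogeneous decay of the $V$ recursion, since both geometric sequences share the ratio $(1+\gamma)^{-1}$; their convolution produces the factor $(n-n_0)(1+\gamma)^{-(n-n_0)}$ multiplying $\kappa|\bd^{n_0}|^2$, which is precisely the middle term of \eqref{semih1errorest}, while the non-decaying part contributes $\tfrac{\kappa}{\gamma}\,c\tau^2\lambda_1^{-1}R_1^2$, which is the last term. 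I expect the main obstacle to be exactly this coupling: isolating the unabsorbable $L^2$-error forcing with the correct coefficient $\kappa$ (the gradient dissipation controls $\|\bd^{k+1}\|^2$ but not the weaker $|\bd^{k+1}|^2$), and then carrying out the convolution precisely enough to recover the resonant linear-in-$(n-n_0)$ growth, which is nevertheless dominated by the geometric decay. A secondary technical point is keeping every logarithmic factor $N$-independent through the monotonicity argument and absorbing the resulting $\|\bd^{k+1}\|^2$ term via \eqref{propvcondbeta}.
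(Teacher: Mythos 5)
Your plan reproduces the paper's strategy almost verbatim: test the error equation \eqref{comp13} against $2\tau A\bd^{k+1}$, use \eqref{disph1} for the nudging term, control the consistency terms with the $\mD(A)$-level bounds of Proposition \ref{propboundtimederv} and the monotonicity trick that keeps the logarithms $N$-independent, isolate an unabsorbable forcing $\kappa\,|\bd^k|^2$ from the nonlinear error term, and close with the discrete Gronwall lemma by feeding in \eqref{semil2errorest}; your description of the final convolution, including the resonant factor $(n-n_0)(1+\gamma)^{-(n-n_0)}$ and the tail $\kappa\gamma^{-1}c\tau^2\lambda_1^{-1}R_1^2$, matches \eqref{comp29}--\eqref{comp30} exactly. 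There is, however, one genuine omission. By declaring the right-hand side to be ``the exact analogue of \eqref{comp14}'' you lose a term: \eqref{comp14} was obtained after $(B(\vn^k,\bd^{k+1}),\bd^{k+1})$ had been annihilated by the orthogonality property \eqref{propBorthog2}, but when the test function is $A\bd^{k+1}$ the term $2\tau|(B(\vn^k,\bd^{k+1}),A\bd^{k+1})|$ survives and must be estimated (this is \eqref{comp26} in the paper). Your accounting --- five consistency groups plus the single nonlinear error term $2\tau|(B(\bd^k,\vn(t_{k+1})),A\bd^{k+1})|$ --- confirms it is missing. It also cannot be treated by your announced device of always placing the logarithm on the full Galerkin solution: its second argument is $\bd^{k+1}$, so \eqref{ineqTiti2} forces the logarithm onto $|A\bd^{k+1}|/(\lambda_1^{1/2}\|\bd^{k+1}\|)$, while \eqref{ineqBG} would place it on $|A\vn^k|$, for which no uniform bound on the discrete iterates is available; one must instead absorb it via the min-log inequality \eqref{ineqminlog} against the $\tau\nu|A\bd^{k+1}|^2$ dissipation and hypothesis \eqref{propvcondbeta}, exactly as in the derivation of \eqref{comppp9}.

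Two smaller points. First, your splitting $\bd^k=\bd^{k+1}+(\bd^k-\bd^{k+1})$ with \eqref{ineqTiti2} on the first piece and \eqref{estnonlineartermL4L4L2} on the second differs from the paper, which keeps $B(\bd^k,\vn(t_{k+1}))$ whole under \eqref{estnonlineartermL4L4L2} and only afterwards splits $\|\bd^k\|\leq\|\bd^{k+1}\|+\|\bd^{k+1}-\bd^k\|$; your route in fact produces a smaller residual (only the $\tau M_1^2/\nu$ part of $\kappa$, not the $\Lambda^{-1}$ part), so the stated bound would follow a fortiori. Second, absorbing $\tfrac{c\tau M_1^2}{\nu}\bigl(1+\log(M_2/(\lambda_1^{1/2}M_1))\bigr)\|\bd^{k+1}\|^2$ into $\tau\beta\|\bd^{k+1}\|^2$ is \emph{not} ``exactly as in the derivation of \eqref{comppp9}'': there the logarithm involves $\bd^{k+1}$ itself and is controlled by \eqref{ineqminlog} together with the $|A\bd^{k+1}|^2$ dissipation, whereas here the logarithm is a fixed constant and you need the separate inequality $\beta\geq \tfrac{cM_1^2}{\nu}\bigl(1+\log(M_2/(\lambda_1^{1/2}M_1))\bigr)$. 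This does hold under \eqref{propvcondbeta} with a suitable absolute constant, since $M_2$ depends on $\beta$ only through $\beta^{1/2}$ inside the logarithm, but it requires a short verification that you should supply rather than cite \eqref{comppp9} for.
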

\begin{proof}
As in the proof of Theorem \ref{l2convsemi}, we denote $\bd^k = \vn^k - \vn(t_k)$. Then, taking the inner product of \eqref{comp13} with $2\tau A\bd^{k+1}$ in $H$ ($k \geq n_0$) and using \eqref{disph1}, we obtain that
\begin{multline}\label{comp21}
	\left(1+\tau\beta\right)\left\|\bd^{k+1}\right\|^2+\left\|\bd^{k+1}-\bd^k\right\|^2-\left\|\bd^k\right\|^2+\tau \nu \left|A\bd^{k+1}\right|^2\\
	\leq 2\nu\int_{t_k}^{t_{k+1}} \left|\left(A\left(\bv_N\left(s\right) - \bv_N\left(t_{k+1}\right)\right),A\bd^{k+1}\right)\right| \rd s\\
	+2\int_{t_k}^{t_{k+1}} \left|\left(B\left(\vn\left(t_k\right),\vn\left(s\right)-\vn\left(t_{k+1}\right)\right),A\bd^{k+1}\right)\right| \rd s \\
	+2\int_{t_k}^{t_{k+1}} \left|\left(B\left(\vn(s) - \vn\left(t_{k}\right),\vn\left(s\right)\right),A\bd^{k+1}\right)\right| \rd s\\
	+2\beta \int_{t_k}^{t_{k+1}}  \left| \left(I_h\left(\bv_N(s) - \bv_N\left(t_{k+1}\right)\right),A\bd^{k+1}\right) \right| \rd s\\
	+2\beta \int_{t_k}^{t_{k+1}} \left| \left(I_h\left(\bu\left(t_{k+1}\right) - \bu(s)\right),A\bd^{k+1}\right)\right| \rd s\\
	+2\tau\left|\left(B\left(\bv_N^k,\bd^{k+1}\right),A\bd^{k+1}\right)\right|+2\tau\left|\left(B\left(\bd^k,\vn\left(t_{k+1}\right)\right),A\bd^{k+1}\right)\right|.
\end{multline}
Most of the terms on the right-hand side of \eqref{comp21} are estimated similarly as in previous calculations, except we now also use the global Lipschitz property in time of $\bv_N(s)$ with respect to the $A$ norm where appropriate (cf. Proposition \ref{propboundtimederv}). In particular, we have the following estimate of the first term:
\begin{multline}\label{comp22}
	 2\nu\int_{t_k}^{t_{k+1}} \left|\left(A\left(\bv_N\left(s\right) - \bv_N\left(t_{k+1}\right)\right),A\bd^{k+1}\right)\right| \rd s\\
	\leq 2\nu\int_{t_k}^{t_{k+1}} \tau \left| A \frac{\rd \bv_N}{\rd s}(s)\right| \left| A\bd^{k+1}\right| \rd s
	\leq 2c\nu\int_{t_k}^{t_{k+1}}\tau R_2\left|A\bd^{k+1}\right|\rd s\\
	\leq\frac{\tau\nu}{14}\left|A\bd^{k+1}\right|^2+c\tau^3\nu R_2^2,
\end{multline}
Applying \eqref{leftovh1} to both terms involving $I_h$ with $\alpha_0=6$ and $\alpha_1=14$, and using again the global Lipschitz property in time of $\bu(s)$ and $\vn(s)$ with respect to the $V$ norm (Propositions \ref{propboundsu} and \ref{propboundtimederv}) we obtain that
\begin{multline}\label{comp23}
	2\beta \int_{t_k}^{t_{k+1}}  \left| \left(I_h\left(\bv_N(s) - \bv_N\left(t_{k+1}\right)\right),A\bd^{k+1}\right) \right| \rd s\\
	+2\beta \int_{t_k}^{t_{k+1}} \left| \left(I_h\left(\bu\left(t_{k+1}\right) - \bu(s)\right),A\bd^{k+1}\right)\right| \rd s\\
	\leq\frac{\tau\beta}{3}\left\|\bd^{k+1}\right\|^2+\frac{\tau\nu}{7}\left|A\bd^{k+1}\right|^2+c\tau^3\beta R_1^2.
\end{multline}
Using inequalities \eqref{ineqBG} and \eqref{ineqTiti2} to estimate the second and third terms on the right-hand side of \eqref{comp21}, respectively, we have
\begin{multline}\label{comp24}
	2\left|\left(B\left(\vn\left(t_k\right),\vn\left(s\right)-\vn\left(t_{k+1}\right)\right),A\bd^{k+1}\right)\right|\\
	+2\left|\left(B\left(\vn\left(t_k\right)-\vn\left(s\right),\vn\left(s\right)\right),A\bd^{k+1}\right)\right|\\
	\leq 2c_B\left\|\vn\left(t_k\right)\right\|\left\|\vn\left(s\right)-\vn\left(t_{k+1}\right)\right\|\left|A\bd^{k+1}\right|\left[ 1 + \log\left( \frac{\left|A\vn\left(t_k\right)\right|}{\lambda_1^{1/2}\left\|\vn\left(t_k\right)\right\|}\right) \right]^{1/2}\\
	+2c_T\left\|\vn\left(s\right)\right\|\left\|\vn\left(s\right)-\vn\left(t_{k+1}\right)\right\|\left|A\bd^{k+1}\right|\left[ 1 + \log\left( \frac{\left|A\vn\left(s\right)\right|}{\lambda_1^{1/2}\left\|\vn\left(s\right)\right\|}\right) \right]^{1/2}
\end{multline}
Now, we bound $\left|A\bv_N(s)\right|$ and $\left|A\bv_N(t_k)\right|$ by $M_2$ from \eqref{boundsv} and use the fact that the function $\psi(x)=x[1 + \log(\alpha/x)]$ is increasing for $0 \leq x \leq \alpha$, $\alpha>0$, with $\alpha = M_2/\lambda_1^{1/2}$. Hence, since $c M_1 \leq M_2/\lambda_1^{1/2}$, for $0\leq x\leq cM_1 $, $\psi$ attains its maximum at $x=cM_1$. This yields that the right-hand side of \eqref{comp24} is bounded by
\begin{multline}\label{comp24a}
cM_1\left\|\vn\left(s\right)-\vn\left(t_{k+1}\right)\right\|\left|A\bd^{k+1}\right|\left[ 1 + \log\left(\frac{M_2}{\lambda_1^{1/2}M_1}\right) \right]^{1/2}\\
\leq c\tau M_1R_1\left|A\bd^{k+1}\right|\left[ 1 + \log\left(\frac{M_2}{\lambda_1^{1/2}M_1}\right) \right]^{1/2}\\
\leq\frac{\nu}{7}\left|A\bd^{k+1}\right|^2+c\tau^2\frac{M_1^2R_1^2}{\nu}\left[1+\log\left(\frac{M_2}{\lambda_1^{1/2}M_1}\right)\right].
\end{multline}
From \eqref{comp24} and \eqref{comp24a}, we conclude that
\begin{multline}\label{comp25}
	2\int_{t_k}^{t_{k+1}} \left|\left(B\left(\vn\left(t_k\right),\vn\left(s\right)-\vn\left(t_{k+1}\right)\right),A\bd^{k+1}\right)\right| \rd s \\
	+2\int_{t_k}^{t_{k+1}} \left|\left(B\left(\vn(s) - \vn\left(t_{k}\right),\vn\left(s\right)\right),A\bd^{k+1}\right)\right| \rd s\\
	\leq \frac{\tau\nu}{7}\left|A\bd^{k+1}\right|^2+c\tau^3\frac{M_1^2R_1^2}{\nu}\left[1+\log\left(\frac{M_2}{\lambda_1^{1/2}M_1}\right)\right]
\end{multline}
The sixth term on the right-hand side of \eqref{comp21} is bounded by using \eqref{ineqTiti2} and Young's inequality, as
\begin{multline}\label{comp26}
2\tau\left|\left(B\left(\bv_N^k,\bd^{k+1}\right),A\bd^{k+1}\right)\right|\\
\leq\frac{\tau\nu}{14}\left|A\bd^{k+1}\right|^2+\frac{c\tau M_1^2}{\nu}\left\|\bd^{k+1}\right\|^2\left[1+\log\left(\frac{\left|A\bd^{k+1}\right|^2}{\lambda_1\left\|\bd^{k+1}\right\|^2}\right)\right].
\end{multline}
It remains to estimate the last term on the right-hand side of \eqref{comp21}. First, using inequality \eqref{estnonlineartermL4L4L2} and Young's inequality, we obtain that
\begin{multline}\label{comp27}
2\tau\left|\left(B\left(\bd^k,\vn\left(t_{k+1}\right)\right),A\bd^{k+1}\right)\right|\\
\leq 2c\tau\left|\bd^k\right|^{1/2}\left\|\bd^k\right\|^{1/2}\left\|\bv_N\left(t_k\right)\right\|^{1/2}\left|A\bv_N\left(t_k\right)\right|^{1/2}\left|A\bd^{k+1}\right|\\
\leq\frac{\tau\nu}{14}\left|A\bd^{k+1}\right|^2+\frac{c\tau}{\nu}\left|\bd^k\right|\left\|\bd^k\right\|\left\|\bv_N\left(t_k\right)\right\|\left|A\bv_N\left(t_k\right)\right|.
\end{multline}
Now, using the bounds from \eqref{boundsv} and writing
\[ \left\|\bd^k\right\|\leq \left\|\bd^{k+1}\right\|+\left\|\bd^{k+1}-\bd^k\right\|,
\]
we see that the second term on the right hand side of \eqref{comp27} is bounded by
\begin{multline*}
\frac{c\tau}{\nu}\left|\bd^k\right|\left\|\bd^k-\bd^{k+1}\right\|M_1M_2+\frac{c\tau}{\nu}\left|\bd^k\right|\left\|\bd^{k+1}\right\|M_1M_2\\
\leq \left\|\bd^k-\bd^{k+1}\right\|^2+\frac{c\tau^2}{\nu^2}\left|\bd^k\right|^2M_1^2M_2^2+\tau\frac{cM_1^2}{\nu}\Lambda\left\|\bd^{k+1}\right\|^2+\frac{c\tau M_2^2}{\nu\Lambda}\left|\bd^k\right|^2\\
\leq \left\|\bd^k-\bd^{k+1}\right\|^2+\frac{\tau\beta}{6}\left\|\bd^{k+1}\right\|^2+\frac{c\tau M_2^2}{\nu}\left(\tau\frac{M_1^2}{\nu}+\Lambda^{-1}\right)\left|\bd^k\right|^2,
\end{multline*}	
where we used condition \eqref{propvcondbeta} on $\beta$. Therefore, we have
\begin{multline}\label{comp28}
	2\tau\left|\left(B\left(\bd^k,\vn\left(t_{k+1}\right)\right),A\bd^{k+1}\right)\right|\leq \frac{\tau\nu}{14}\left|A\bd^{k+1}\right|^2+\left\|\bd^k-\bd^{k+1}\right\|^2\\
	+\frac{\tau\beta}{6}\left\|\bd^{k+1}\right\|^2+\frac{c\tau M_2^2}{\nu}\left(\tau\frac{M_1^2}{\nu}+\Lambda^{-1}\right)\left|\bd^k\right|^2.
\end{multline}

Plugging \eqref{comp22}, \eqref{comp23}, \eqref{comp25}, \eqref{comp26} and \eqref{comp28} into \eqref{comp21}, we obtain
\begin{multline*}
	\left(1+\frac{\tau\beta}{2}\right)\left\|\bd^{k+1}\right\|^2+\frac{\tau\nu}{2}\left|A\bd^{k+1}\right|^2-\frac{c\tau M_1^2}{\nu}\left\|\bd^{k+1}\right\|^2\left[1+\log\left(\frac{\left|A\bd^{k+1}\right|^2}{\lambda_1\left\|\bd^{k+1}\right\|^2}\right)\right]\\
	\leq\left\|\bd^k\right\|^2+c\tau^3\nu R_2^2+c\tau^3\beta R_1^2+\frac{c\tau^3M_1^2R_1^2}{\nu}\left[1+\log\left( \frac{M_2}{\lambda_1^{1/2}M_1}\right) \right]\\
	+\frac{c\tau M_2^2}{\nu}\left(\tau\frac{M_1^2}{\nu}+\Lambda^{-1}\right)\left|\bd^k\right|^2.
\end{multline*}
Proceeding as in the proof of inequality \eqref{comppp9} and using Poincar\'e inequality, \eqref{Poincare}, we obtain that
\begin{multline}\label{comp28a}
	\left(1+\frac{\tau}{4}\left(\beta+\nu\lambda_1\right)\right)\left\|\bd^{k+1}\right\|^2\leq\left\|\bd^k\right\|^2 + c\tau^3\nu R_2^2 \\
	+ c\tau^3R_1^2\left[\beta+\frac{M_1^2}{\nu}\left(1+\log\left(\frac{M_2}{\lambda_1^{1/2}M_1}\right)\right)\right]
	+\frac{c\tau M_2^2}{\nu}\left(\tau\frac{M_1^2}{\nu}+\Lambda^{-1}\right)\left|\bd^k\right|^2, \quad \forall k \geq n_0.
\end{multline}
From \eqref{comp28a} and Lemma \eqref{discretegronwall}, it follows that, for every $n \geq n_0$,
\begin{multline}\label{comp29}
	\left\|\bd^{n}\right\|^2 \leq \frac{\left\|\bd^{n_0}\right\|^2}{\left(1+\gamma\right)^{n-n_0}}+\frac{c\tau M_2^2}{\nu}\left(\tau\frac{M_1^2}{\nu}+\Lambda^{-1}\right)\sum_{k=n_0}^{n-1}\frac{\left|\bd^k\right|^2}{\left(1+\gamma\right)^{n-k}}+\frac{c\tau^2\nu R_2^2}{\beta+\nu\lambda_1}\\
	+\frac{c\tau^2R_1^2}{\beta+\nu\lambda_1}\left[\beta+\frac{M_1^2}{\nu}\left(1+\log\left(\frac{M_2}{\lambda_1^{1/2}M_1}\right)\right)\right],
\end{multline}
where $\gamma=\tau(\beta+\nu\lambda_1)/4$. In order to estimate the summation appearing in \eqref{comp29}, we use the result from Theorem \ref{l2convsemi} and obtain that
\begin{multline}\label{comp30}
\sum_{k=n_0}^{n-1}\frac{|\bd^k|^2}{\left(1+\gamma\right)^{n-k}}\leq \sum_{k=n_0}^{n-1}\frac{|\bd^{n_0}|^2}{\left(1+\gamma\right)^{k-n_0+n-k}} + \sum_{k=n_0}^{n-1}\frac{c\tau^2\lambda_1^{-1}R_1^2}{(1+\gamma)^{n-k}}\\
\leq \frac{n-n_0}{\left(1+\gamma\right)^{n-n_0}}\left|\bd^{n_0}\right|^2+\frac{c\tau\lambda_1^{-1}R_1^2}{\beta+\nu\lambda_1}.
\end{multline}
Finally, \eqref{semih1errorest} follows by plugging \eqref{comp30} into \eqref{comp29}.
\end{proof}

Next, we consider a fully discrete approximation, i.e. in space and time, of \eqref{eqDA} by using the time discretization scheme \eqref{semimp} and the spatial discretization given by the Postprocessing Galerkin method (subsection \ref{subsecPPGM}). Combining the results from Theorems \ref{l2convsemi} and \ref{h1errorsemi} with the error estimates for the Postprocessing Galerkin method from Theorem \ref{thmerrorPPGM}, we are able to show error estimates, again in the $H$ and $V$ norms, between this fully discrete approximation of a solution $\vn$ of \eqref{eqDA} and the corresponding reference solution $\bu$ of \eqref{eqNSE}.

\begin{thm}\label{thmerrorfulldiscSI}
	Assuming the hypotheses of Theorems \ref{thmerrorPPGM} and \ref{l2convsemi}, there exists $T_3 = T_3(\nu,\lambda_1,|\f|,N, \tau) \geq 0$ such that, for every $n \geq \lceil T_3/ \tau \rceil $,
	\be\label{errorfulldiscSIH}
		|\vn^n + \Phi_1(\vn^n) - \bu(t_n)| \leq c\tau\lambda_1^{-1/2} R_1 +  C \frac{L_N}{\lambda_{N+1}^{5/4}}
    \ee
    and
    \begin{multline}\label{errorfulldiscSIV}
    	\|\vn^n + \Phi_1(\vn^n) - \bu(t_n)\| \leq \\
    	\leq \frac{c\tau R_2 \nu^{1/2} }{(\beta+\nu\lambda_1)^{1/2}} \left\{ 1+\frac{R_1^2}{\nu R_2^2}\left[\beta+\frac{M_1^2}{\nu}\left(1+\log\left(\frac{M_2}{\lambda_1^{1/2}M_1}\right)\right)\right] \right. \\
    	\left. +\frac{M_2^2R_1^2}{\lambda_1\nu^2R_2^2}\left(\tau\frac{M_1^2}{\nu}+\Lambda^{-1}\right)\right\}^{1/2} + C \frac{L_N}{\lambda_{N+1}^{3/4}},
    \end{multline}
    where $c$ is an absolute constant independent on any physical parameter and $C$ is a constant depending on $\nu$, $\lambda_1$, $|\f|$ and $1/h^2$, but independent of $N$.
\end{thm}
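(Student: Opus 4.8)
The plan is to compare the fully discrete iterate $\vn^n + \Phi_1(\vn^n)$ against the \emph{continuous-in-time} Postprocessing Galerkin approximation $\vn(t_n) + \Phi_1(\vn(t_n))$, and then to compare the latter against $\bu(t_n)$, which is precisely the content of Theorem \ref{thmerrorPPGM}. Writing $\bd^n := \vn^n - \vn(t_n)$ and inserting $\pm[\vn(t_n) + \Phi_1(\vn(t_n))]$, the triangle inequality gives, in the $V$ norm,
\begin{multline*}
\| \vn^n + \Phi_1(\vn^n) - \bu(t_n) \| \leq \| \bd^n \| + \| \Phi_1(\vn^n) - \Phi_1(\vn(t_n)) \| \\
+ \| [\vn(t_n) + \Phi_1(\vn(t_n))] - \bu(t_n) \|,
\end{multline*}
and the analogous splitting with $|\cdot|$ in the $H$ norm. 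The last term on the right is bounded by $C L_N \lambda_{N+1}^{-3/4}$ (resp.\ $C L_N \lambda_{N+1}^{-5/4}$ in $H$) as soon as $t_n \geq T_2$, directly by Theorem \ref{thmerrorPPGM}.

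For the postprocessing difference, the key is to reduce it back to the time-discretization error through the Lipschitz property of $\Phi_1$. By the stability estimate \eqref{semibdsimple} we have $\vn^n \in P_N B_V(6M_1)$, and by \eqref{boundsv} we have $\vn(t_n) \in P_N B_V(8M_1)$ once $t_n \geq T_1$; hence both iterates lie in the common ball $P_N B_V(8M_1)$, on which \eqref{Phi1LipsH} and \eqref{Phi1LipsV} apply with Lipschitz constant $l = C \lambda_{N+1}^{-1/4}$. Thus $|\Phi_1(\vn^n) - \Phi_1(\vn(t_n))| \leq l |\bd^n|$ and $\|\Phi_1(\vn^n) - \Phi_1(\vn(t_n))\| \leq l \|\bd^n\|$, so the first two terms above are controlled by $(1+l)\|\bd^n\|$ (resp.\ $(1+l)|\bd^n|$); this factor is bounded and does not affect the structure of the estimate. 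The quantities $|\bd^n|$ and $\|\bd^n\|$ are then exactly the time-discretization errors already estimated in Theorems \ref{l2convsemi} and \ref{h1errorsemi}.

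It remains to discard the transient contributions in those two bounds, which is where the threshold $T_3$ enters and is the main point requiring care. The estimate \eqref{semil2errorest} for $|\bd^n|^2$ is a geometrically decaying term $|\bd^{n_0}|^2(1+\tfrac{\tau}{4}(\beta+\nu\lambda_1))^{-(n-n_0)}$ plus the persistent term $c\tau^2\lambda_1^{-1}R_1^2$; since $|\bd^{n_0}|$ is bounded independently of $N$ by the stability bounds, one takes $n$ large enough that the decaying term drops below $c\tau^2\lambda_1^{-1}R_1^2$, which yields $|\bd^n| \leq c\tau\lambda_1^{-1/2}R_1$ and hence \eqref{errorfulldiscSIH}. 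For the $V$ norm, \eqref{semih1errorest} carries two transient pieces — one purely geometric and one weighted by the extra algebraic factor $(n-n_0)$ — and the obstacle is to confirm that both are genuinely dominated for large $n$; this follows because $(n-n_0)(1+\gamma)^{-(n-n_0)} \to 0$ as $n \to \infty$ with $\gamma = \tfrac{\tau}{4}(\beta+\nu\lambda_1)$, so that the surviving curly-brace term is exactly the quantity appearing under the square root in \eqref{errorfulldiscSIV}. Choosing $T_3$ to be the maximum of $T_1$, $T_2$, and the explicit thresholds suppressing these transients — depending on $\tau$ through $\gamma$ and on $N$ through $T_2$ — gives the stated range $n \geq \lceil T_3/\tau\rceil$, and everything else is a direct combination of the already-established estimates.
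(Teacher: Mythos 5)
Your proposal is correct and follows essentially the same route as the paper: the insertion of $\pm[\vn(t_n)+\Phi_1(\vn(t_n))]$, the Lipschitz bound $|\Phi_1(\vn^n)-\Phi_1(\vn(t_n))|\leq l\,|\bd^n|$ (and likewise in $V$) yielding the factor $(1+l)$, and the combination of Theorem \ref{thmerrorPPGM} with Theorems \ref{l2convsemi} and \ref{h1errorsemi}. Your additional remarks --- that \eqref{semibdsimple} and \eqref{boundsv} place both $\vn^n$ and $\vn(t_n)$ in a common ball $P_N B_V(8M_1)$ so that \eqref{Phi1LipsH}--\eqref{Phi1LipsV} apply, and that the transient $(n-n_0)(1+\gamma)^{-(n-n_0)}$ term tends to zero --- are details the paper leaves implicit, and they are handled correctly.
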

\begin{proof}
Let $\vn$ be the unique solution of \eqref{eqDAGalerkin} corresponding to $I_h(\bu)$ and satisfying $\vn(0) = \bv_{N,0}$. Notice that
\begin{multline}\label{errorFDSIH}
	|\vn^n + \Phi_1(\vn^n) - \bu(t_n)| \leq \\
	\leq |\vn^n - \vn(t_n)| + |\Phi_1(\vn^n) - \Phi_1(\vn(t_n))| + |\vn(t_n) + \Phi_1(\vn(t_n)) - \bu(t_n)|\\
	\leq (1 + l)|\vn^n - \vn(t_n)| + |\vn(t_n) + \Phi_1(\vn(t_n)) - \bu(t_n)|,
\end{multline}
where $l > 0$ is the Lipschitz constant of $\Phi_1$ as given in \eqref{Phi1LipsH} and \eqref{Phi1LipsV}. Hence, \eqref{errorfulldiscSIH} follows from \eqref{errorFDSIH} and the results of Theorems \ref{thmerrorPPGM} and \ref{l2convsemi}. Clearly, \eqref{errorfulldiscSIV} follows analogously, but using the result of Theorem \ref{h1errorsemi} instead.
\end{proof}

\subsection{Fully implicit in time scheme}\label{subsecFI}

Let us again consider a time step $\tau > 0$ and a regular sequence of times $t_k = k \tau$, for every $k \in \mathbb{N}$. The fully implicit in time Euler scheme is given by
\be\label{fullimp}
  \frac{\bv_N^{k+1} - \bv_N^k}{\tau} + \nu A \bv_N^{k+1} + P_N B(\bv_N^{k+1},\bv_N^{k+1}) = P_N \f - \beta P_N P_\sigma I_h(\bv_N^{k+1} - \bu(t_{k+1})).
\ee

Notice that the difference with respect to the semi-implicit scheme \eqref{semimp} lies in the discretization of the bilinear term, since now both entries evolve at the same time.

Next, we show existence of a solution of the initial-value problem associated to \eqref{fullimp}. First, we state the following lemma, whose proof can be found, e.g., in \cite[Lemma 7.2]{ConstantinFoiasbook}.

\begin{lem}\label{exis}
	Let $\Xi$ be a finite-dimensional inner product space, with inner product $(\,\cdot \, ,\,  \cdot\,)_\Xi$. Let $B \subset \Xi$ be a closed ball. Suppose $\Phi:B\rightarrow \Xi$ is continuous and satisfies $(\Phi (\bxi),\bxi)_\Xi <0$ for every $\bxi \in \partial B$. Then, there exists $\bxi \in B$ such that $\Phi(\bxi)=0$.
\end{lem}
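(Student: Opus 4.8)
The plan is to prove this as a standard consequence of Brouwer's fixed point theorem, via an argument by contradiction. Throughout I take $B$ to be the closed ball of radius $R>0$ centered at the origin of $\Xi$; this is the setting in which the lemma is applied (to the nonlinear map defining a single step of \eqref{fullimp}) and in which the sign condition $(\Phi(\bxi),\bxi)_\Xi<0$ on $\partial B$ has its natural geometric meaning, namely that the continuous field $\Phi$ points strictly inward along the boundary sphere. Denote by $|\cdot|_\Xi$ the norm induced by the inner product $(\cdot,\cdot)_\Xi$.

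Suppose, toward a contradiction, that $\Phi(\bxi)\neq 0$ for every $\bxi\in B$. Then the map $g:B\to\Xi$ defined by $g(\bxi):=R\,\Phi(\bxi)/|\Phi(\bxi)|_\Xi$ is well defined, and it is continuous as a composition of continuous maps with a nonvanishing denominator. Since $|g(\bxi)|_\Xi=R$ for every $\bxi\in B$, the map $g$ sends $B$ into $\partial B\subset B$. As $B$ is a nonempty compact convex subset of the finite-dimensional space $\Xi$, Brouwer's fixed point theorem applies to $g:B\to B$ and yields a point $\bxi^\ast\in B$ with $g(\bxi^\ast)=\bxi^\ast$. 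Because $g$ takes values on $\partial B$, necessarily $\bxi^\ast\in\partial B$.

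It then remains only to reach the contradiction: testing $\Phi$ against $\bxi^\ast$ and using $\bxi^\ast=g(\bxi^\ast)$ gives $(\Phi(\bxi^\ast),\bxi^\ast)_\Xi=\bigl(\Phi(\bxi^\ast),\,R\,\Phi(\bxi^\ast)/|\Phi(\bxi^\ast)|_\Xi\bigr)_\Xi=R\,|\Phi(\bxi^\ast)|_\Xi>0$, since $\Phi(\bxi^\ast)\neq0$. This contradicts the hypothesis $(\Phi(\bxi),\bxi)_\Xi<0$ for all $\bxi\in\partial B$, so $\Phi$ must vanish at some point of $B$. The only nontrivial ingredient, and hence the main obstacle if one wishes to be self-contained, is Brouwer's fixed point theorem itself (equivalently, the no-retraction theorem for the ball); every other step is elementary. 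An essentially equivalent route is a degree argument: the inward-pointing condition makes the straight-line homotopy $(1-t)\Phi(\bxi)-t\bxi$ nonvanishing on $\partial B$ for $t\in[0,1]$ (pairing with $\bxi$ shows a zero would force $t=0$ and then $(\Phi(\bxi),\bxi)_\Xi=0$, a contradiction), so $\deg(\Phi,B,0)=\deg(-\mathrm{id},B,0)=(-1)^{\dim\Xi}\neq0$, forcing a zero inside $B$. I would nonetheless favor the short Brouwer argument above.
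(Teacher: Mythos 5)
Your proof is correct and coincides with the standard argument behind the reference the paper cites for this lemma (\cite[Lemma 7.2]{ConstantinFoiasbook}; the paper itself gives no proof): assume $\Phi$ has no zero, retract via $g(\bxi)=R\,\Phi(\bxi)/|\Phi(\bxi)|_\Xi$, apply Brouwer's fixed point theorem, and contradict the sign condition on $\partial B$. Your explicit restriction to a ball centered at the origin is also well taken, since the statement as literally written fails for off-center balls (e.g.\ $\Xi=\mathbb{R}$, $B=[1,3]$, $\Phi\equiv -1$), and the origin-centered case is exactly the one used in Proposition \ref{exsfull}.
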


\begin{prop}\label{exsfull}
Let $\bu$ be a solution of \eqref{eqNSE} on $[0, \infty)$ and assume hypotheses \eqref{A2} and (\textbf{A4}). Suppose that $\beta > 0$ and $h > 0$ satisfy $c_0 \beta h^2 \leq \nu$. Then, given $\bv_{N,0} \in P_N H$, there exists a sequence $\{\bv_N^k\}_{k \in \mathbb{N}}$ that solves \eqref{fullimp} and satisfies $\bv_N^0 = \bv_{N,0}$.	
\end{prop}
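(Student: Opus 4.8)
The plan is to construct the sequence inductively, reducing the problem at each step to a single existence statement handled by Lemma \ref{exis}. Given $\bv_N^k \in P_N H$ (starting from $\bv_N^0 = \bv_{N,0}$), finding $\bv_N^{k+1}$ satisfying \eqref{fullimp} is equivalent to finding a zero in $P_N H$ of the continuous map $\Phi : P_N H \to P_N H$ defined by
\[
\Phi(\bxi) := -\frac{\bxi - \bv_N^k}{\tau} - \nu A\bxi - P_N B(\bxi,\bxi) + P_N\f - \beta P_N P_\sigma I_h(\bxi - \bu(t_{k+1})).
\]
I would apply Lemma \ref{exis} with $\Xi = P_N H$ equipped with the $H$ inner product $(\cdot,\cdot)$ and with the closed ball $B_H(R) \cap P_N H$, for a radius $R$ to be chosen large. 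Continuity of $\Phi$ is immediate on the finite-dimensional space $P_N H$, where $A$ and $P_N P_\sigma I_h$ act as continuous linear maps and $P_N B(\cdot,\cdot)$ as a continuous quadratic map.

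The crux is to verify the sign condition $(\Phi(\bxi),\bxi) < 0$ on the sphere $|\bxi| = R$. Pairing $\Phi(\bxi)$ with $\bxi$ in $H$, the nonlinear contribution $(P_N B(\bxi,\bxi),\bxi) = (B(\bxi,\bxi),\bxi)$ vanishes identically by the orthogonality property \eqref{propBorthog2}; this cancellation is exactly what renders the fully-implicit scheme tractable here, since no sign control of the nonlinearity is needed. For the feedback term I would split $I_h(\bxi - \bu(t_{k+1})) = I_h(\bxi) - I_h(\bu(t_{k+1}))$ and bound the self-interaction by inequality \eqref{displ2} (legitimate thanks to the assumption $c_0 \beta h^2 \le \nu$), absorbing the resulting $\nu\|\bxi\|^2$ into the viscous term $-\nu\|\bxi\|^2$. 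The remaining pieces --- those carrying $\bv_N^k$, $\f$ and $I_h(\bu(t_{k+1}))$ --- are linear in $\bxi$ and are controlled by Cauchy--Schwarz and Young's inequalities. Combining these estimates with Poincar\'e's inequality \eqref{Poincare} yields a bound of the form
\[
(\Phi(\bxi),\bxi) \le -\frac{|\bxi|^2}{\tau} + \left( \frac{|\bv_N^k|}{\tau} + |\f| + \beta\,|P_\sigma I_h(\bu(t_{k+1}))| \right)|\bxi|,
\]
whose right-hand side is strictly negative once $|\bxi|$ exceeds a finite threshold determined by the known data $\bv_N^k$, $\f$ and $\bu(t_{k+1})$.

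Choosing $R$ larger than this threshold, Lemma \ref{exis} produces a zero $\bxi =: \bv_N^{k+1} \in P_N H$ of $\Phi$, which is the sought solution at step $k+1$; iterating over $k \in \mathbb{N}$ yields the full sequence $\{\bv_N^k\}_{k \in \mathbb{N}}$ with $\bv_N^0 = \bv_{N,0}$. I expect the coercivity estimate on the sphere to be the only genuinely delicate point, and it rests entirely on the cancellation of the nonlinear term via \eqref{propBorthog2} together with the dissipativity supplied by \eqref{displ2}, after which the remaining terms are harmless lower-order perturbations. Note that this argument delivers existence only; uniqueness is not claimed here and, unlike the semi-implicit case, is not expected to hold unconditionally for the fully-implicit scheme.
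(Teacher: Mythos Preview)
Your proposal is correct and follows essentially the same approach as the paper: define the same map $\Phi$ on $P_N H$, pair with $\bxi$, use \eqref{propBorthog2} to kill the nonlinear term and \eqref{displ2} to handle the self-interaction of the feedback, then apply Lemma \ref{exis} on a ball of sufficiently large radius. The paper is only slightly more explicit in fixing $R = \tau|\f| + |\bv_N^k| + \tau\beta|I_h(\bu(t_{k+1}))| + \nu$ up front, and your mentions of Young's and Poincar\'e's inequalities are harmless but unnecessary (Cauchy--Schwarz alone suffices, and the $-|\bxi|^2/\tau$ term already provides the coercivity).
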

\begin{proof}
By induction, it suffices to prove that, given $\bv_N^k \in P_N H$, there exists $\bv_N^{k+1} \in P_N H$ satisfying \eqref{fullimp}.
	
Let $R = \tau|\f|+|\bv_N^k|+\tau \beta |I_h\left(\bu\left(t_{k+1}\right)\right)|+\nu$, and denote by $B_{P_N H}(R)$ the ball of radius $R$ centered at $0$ in $P_N H$. Define $\Phi : B_{P_N H}(R) \rightarrow P_N H$ by
\[
\Phi(\bxi):=P_N\f+\frac{\bv_N^k-\bxi}{\tau}-\nu A\bxi-P_NB\left(\bxi,\bxi\right)-\beta P_N P_\sigma I_h\left(\bxi-\bu\left(t_{k+1}\right)\right).
\]
Taking the inner product of $\Phi(\bxi)$ with $\bxi$ in $H$, for $\bxi \in B_{P_N H}(R)$, and using \eqref{displ2}, we obtain that
\begin{multline}
\left(\Phi\left(\bxi\right),\bxi\right)
\leq\left(\f,\bxi\right)+\frac{1}{\tau}\left(\bv_N^k,\bxi\right)-\frac{1}{\tau}\left|\bxi\right|^2-\frac{\nu}{2} \left\|\bxi\right\|^2-\frac{\beta}{2}|\bxi|^2 + \beta (I_h(\bu(t_{k+1})),\bxi) \\
\leq \frac{1}{\tau}\left(\tau\left|\f\right|+\left|\bv_N^k\right|+\tau\beta\left|I_h\left(\bu(t_{k+1})\right)\right|-\left|\bxi\right|\right)\left|\bxi\right|-\frac{\nu}{2}\left\|\bxi\right\|^2-\frac{\beta}{2}\left|\bxi\right|^2.
\end{multline}
Hence, for $|\bxi|=R$, we have
\[
(\Phi(\bxi),\bxi)\leq - \frac{\nu R}{\tau} < 0.
\]
Therefore, by Lemma \ref{exis}, there exists $\bv_N^{k+1} \in P_N H$ satisfying \eqref{fullimp}.
\end{proof}

Next, we would like to show uniqueness of solutions to the nonlinear initial-value problem that arises when \eqref{fullimp} is complemented with some initial data, say $\bv_{N,0}$. First, we will show that any sequence $\{\vn^k\}_{k \in \mathbb{N}}$ that solves \eqref{fullimp} and satisfies $\bv_N^0=\bv_{N,0}$ is uniformly bounded, with respect to $k$, $N$ and $\tau$, in the $V$ norm. For this end, we denote by $\{\widetilde{\bv}_N^k\}_{k \in \mathbb{N}}$ the unique solution of the semi-implicit scheme \eqref{semimp} that was established in Proposition \ref{uniqsemi} satisfying the same initial data as the solution $\{\vn^k\}_{k \in \mathbb{N}}$ of \eqref{fullimp}, i.e. $\widetilde{\bv}_N^0 = \vn^0=\bv_{N,0}$. The idea consists in showing that $|\widetilde\bv_N^k-\bv_N^k|$ and $\tau\|\widetilde\bv_N^k-\bv_N^k\|$ satisfy \eqref{semifulll2} and \eqref{semifullh1} below, respectively. Then, using this fact along with the uniform boundedness of $\{\widetilde{\bv}_N^k\}_{k \in \mathbb{N}}$ in the $V$ norm from Theorem \ref{semibounds}, we will show, via an inductive argument, that $\{\vn^k\}_{k \in \mathbb{N}}$ is uniformly bounded,  in $V$ as well. A crucial factor in obtaining this result is the fact that the uniform bounds of $\{\widetilde{\bv}_N^k\}_{k \in \mathbb{N}}$ from Theorem \ref{semibounds} are independent of the nudging parameter $\beta$, which in turn is possible due to the stabilizing mechanism imposed by the feedback-control (nudging) term.

We start by proving a preliminary inequality that allows the use of an inductive argument.

\begin{prop}\label{premh1bd}
Assume hypotheses \eqref{A1}-\eqref{A4} and let $\bv_{N,0} \in P_N H \cap B_V(M_1)$ be given. Let $\{\bv_N^k\}_{k\in \mathbb{N}}$ be any solution of \eqref{fullimp} corresponding to $I_h(\bu)$ and satisfying $\bv_N^0 = \bv_{N,0}$ (observe that such a solution exists by Proposition \ref{exsfull}). Then,
\be\label{fullprelh1}
 \left\|\bv_N^{k+1}\right\|^2\leq 4\left\|\bv_N^{k}\right\|^2+40M_1^2, \quad \forall k \in \mathbb{N}.
\ee
\end{prop}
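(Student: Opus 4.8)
The plan is to test the fully-implicit scheme \eqref{fullimp} against the Stokes multiplier $2\tau A\bv_N^{k+1}$ in $H$. Since $\bv_N^{k+1}\in P_N H$ and $P_N$ commutes with $A$, the projections in \eqref{fullimp} cause no trouble, and the Hilbert-space identity $2(\!(\bphi-\bpsi,\bphi)\!)=\|\bphi\|^2+\|\bphi-\bpsi\|^2-\|\bpsi\|^2$ applied to the discrete time derivative yields the energy identity
\[
\|\bv_N^{k+1}\|^2+\|\bv_N^{k+1}-\bv_N^k\|^2-\|\bv_N^k\|^2+2\tau\nu|A\bv_N^{k+1}|^2=2\tau(\f,A\bv_N^{k+1})-2\tau\beta(P_\sigma I_h(\bv_N^{k+1}-\bu(t_{k+1})),A\bv_N^{k+1})-2\tau(B(\bv_N^{k+1},\bv_N^{k+1}),A\bv_N^{k+1}),
\]
which is exactly \eqref{comp6} except that the fully-implicit nonlinear term now has both entries evaluated at time $t_{k+1}$. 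The routine terms I would handle verbatim as in the proof of \eqref{semih1bd}: Young's inequality gives $2\tau(\f,A\bv_N^{k+1})\le\frac{6\tau}{\nu}|\f|^2+\frac{\tau\nu}{6}|A\bv_N^{k+1}|^2$, and splitting the nudging term and applying \eqref{disph1} and \eqref{leftovh1} together with $\|\bu(t_{k+1})\|\le M_1$ from \eqref{A1} produces $\frac{7\tau\nu}{6}|A\bv_N^{k+1}|^2-\frac{\tau\beta}{2}\|\bv_N^{k+1}\|^2+8\tau\beta M_1^2$. I would also record that testing \eqref{fullimp} against $2\tau\bv_N^{k+1}$ annihilates the nonlinear term by \eqref{propBorthog2}, so the derivation of \eqref{semil2bd}--\eqref{semibdsimple} applies unchanged and gives the unconditional bound $|\bv_N^j|\le 6\lambda_1^{-1/2}M_1$ for every $j$; this $L^2$ estimate will be the crucial input below, since it is the only a priori control available.

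Collecting the routine terms, keeping $\|\bv_N^{k+1}-\bv_N^k\|^2$ and a definite fraction of the dissipation on the left, reduces everything to bounding $2\tau|(B(\bv_N^{k+1},\bv_N^{k+1}),A\bv_N^{k+1})|$ by $\|\bv_N^k\|^2$ and $M_1^2$. This nonlinear term is the whole difficulty, and the two boundary conditions behave very differently. In the periodic case it vanishes identically by the two-dimensional enstrophy orthogonality $(B(\bphi,\bphi),A\bphi)=0$, and then dividing by $1+\frac{\tau\beta}{2}$ and using $\frac{8\tau\beta M_1^2}{1+\tau\beta/2}\le16M_1^2$ together with \eqref{boundf} and \eqref{propvcondbeta} already yields the much stronger one-step estimate $\|\bv_N^{k+1}\|^2\le\|\bv_N^k\|^2+17M_1^2$. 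In the no-slip case the term does not vanish, and the purpose of the deliberately crude constants $4$ and $40$ is precisely to absorb it. Here I would write $B(\bv_N^{k+1},\bv_N^{k+1})=B(\bv_N^k,\bv_N^{k+1})+B(\bv_N^{k+1}-\bv_N^k,\bv_N^{k+1})$: the increment piece is estimated by \eqref{estnonlineartermL4L4L2}, bounding $|\bv_N^{k+1}-\bv_N^k|\le12\lambda_1^{-1/2}M_1$ by the $L^2$ bound and absorbing the factor $\|\bv_N^{k+1}-\bv_N^k\|$ into the $\|\bv_N^{k+1}-\bv_N^k\|^2$ already sitting on the left, while the remaining piece $B(\bv_N^k,\bv_N^{k+1})$ has the same shape as the semi-implicit nonlinear term and is treated by \eqref{ineqTiti2}, Young's inequality, the min--log inequality \eqref{ineqminlog}, and the extra nudging dissipation $-\frac{\tau\beta}{2}\|\bv_N^{k+1}\|^2$, invoking \eqref{propvcondbeta}.

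The hard part — and the reason the conclusion is only a one-step \emph{growth} estimate rather than a uniform bound — is that at this stage no a priori $V$-bound on $\bv_N^{k+1}$, nor even on $\bv_N^k$ for $k\ge1$, is available, so the coefficient $\|\bv_N^k\|^2$ generated by the $B(\bv_N^k,\bv_N^{k+1})$ piece cannot yet be replaced by $O(M_1^2)$ as it could in Theorem \ref{semibounds}; this is exactly why the unconditional $L^2$ bound is indispensable and why one can at best hope to control $\|\bv_N^{k+1}\|^2$ in terms of $\|\bv_N^k\|^2$ with an absolute multiplicative constant. Consequently I would not attempt to close a uniform estimate here: the clean, $\beta$-independent $V$-bound for the fully-implicit iterates is deferred to the subsequent comparison with the semi-implicit solution $\widetilde{\bv}_N^k$, whose $V$-bound from Theorem \ref{semibounds} is already independent of $\beta$. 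The decomposition $\|\bv_N^{k+1}\|^2\le2\|\bv_N^k\|^2+2\|\bv_N^{k+1}-\bv_N^k\|^2$ transparently accounts for the factor $4$, and the role of \eqref{fullprelh1} is then simply to feed this one-step control into the inductive argument that, combined with the semi-implicit comparison, produces the uniform $V$-boundedness.
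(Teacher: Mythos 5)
Your setup is fine as far as it goes: testing \eqref{fullimp} against $2\tau A\bv_N^{k+1}$, handling the forcing and nudging terms exactly as in \eqref{comp7}--\eqref{comp8}, and observing that the unconditional $L^2$ bound \eqref{fulll2bd} is available independently of any $V$-estimate are all correct, and your observation that $(B(\bphi,\bphi),A\bphi)=0$ in the periodic case is true. But the no-slip case is where the proposition lives, and there your treatment of the nonlinear term does not close. For the increment piece, \eqref{estnonlineartermL4L4L2} with third argument $A\bv_N^{k+1}$ gives $2\tau c_2\,|\bd|^{1/2}\|\bd\|^{1/2}\|\bv_N^{k+1}\|^{1/2}|A\bv_N^{k+1}|^{3/2}$ with $\bd=\bv_N^{k+1}-\bv_N^k$; whichever way you split this by Young's inequality, the coefficient multiplying $\|\bd\|^2$ or $|A\bv_N^{k+1}|^2$ retains a factor of $\tau$ times a positive power of $\|\bv_N^{k+1}\|$, and since neither $\tau$ is assumed small nor $\|\bv_N^{k+1}\|$ is under control, the claimed absorption into the terms ``already sitting on the left'' is impossible unconditionally. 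For the piece $B(\bv_N^k,\bv_N^{k+1})$, inequality \eqref{ineqTiti2} followed by the min--log inequality \eqref{ineqminlog} requires the nudging dissipation to dominate $\tfrac{c\|\bv_N^k\|^2}{\nu}\log\bigl(\tfrac{c\|\bv_N^k\|^2}{\nu^2\lambda_1}\bigr)$, whereas \eqref{propvcondbeta} only provides $\beta\gtrsim \tfrac{M_1^2}{\nu}\Lambda$; you yourself note that $\|\bv_N^k\|$ cannot yet be replaced by $O(M_1)$, but that concession is fatal to the step rather than a feature of it. Finally, the assertion that $\|\bv_N^{k+1}\|^2\le 2\|\bv_N^k\|^2+2\|\bd\|^2$ ``transparently accounts for the factor $4$'' is unsupported: you never produce a bound on $\|\bd\|^2$, so nothing is concluded.

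The missing idea is that the comparison with the semi-implicit solution is not a subsequent step — it is the proof of this proposition. The paper introduces the unique semi-implicit solution $\{\widetilde{\bv}_N^k\}$ with the same initial data, and first shows unconditionally (using only \eqref{propBorthog2}, \eqref{displ2}, \eqref{ineqTiti1}, the bound $\|\widetilde{\bv}_N^k\|\le 6M_1$ from Theorem \ref{semibounds}, and the discrete Gronwall lemma) that $\bta^k=\widetilde{\bv}_N^k-\bv_N^k$ satisfies $|\bta^{k+1}|^2\le \nu M_1^2/(\beta+\nu\lambda_1)$ and $\tau\|\bta^{k+1}\|^2\le 4M_1^2/(\beta+\nu\lambda_1)+\tau M_1^2$. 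The correct decomposition of the nonlinearity is then $B(\bv_N^{k+1},\bv_N^{k+1})=B(\widetilde{\bv}_N^{k+1},\bv_N^{k+1})-B(\bta^{k+1},\bv_N^{k+1})$, in which the \emph{first} entry of each term is a quantity of known size: the first is handled by \eqref{ineqTiti2} with $\|\widetilde{\bv}_N^{k+1}\|\le 6M_1$ (so the min--log absorption works with \eqref{propvcondbeta} as stated), and the second by \eqref{estnonlineartermL4L4L2} together with the two smallness estimates on $\bta^{k+1}$, which — crucially using \eqref{propvcondbeta} again — leave a contribution of only $\tfrac34\|\bv_N^{k+1}\|^2$ plus a term dominated by the logarithmic one. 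This is precisely what produces the surviving coefficient $\tfrac14$ on $\|\bv_N^{k+1}\|^2$ and hence the constants $4$ and $40$ in \eqref{fullprelh1}. Note also that this is why the $\beta$-independence of the bounds in Theorem \ref{semibounds} is emphasized in the paper: it is consumed here, not later.
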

\begin{proof}
Let $\{\widetilde{\bv}_N^k\}_{k \in \mathbb{N}}$ be the unique solution of \eqref{semimp} corresponding to $I_h(\bu)$ and satisfying $\widetilde{\bv}_N^0 = \bv_{N,0}$. Set $\bta^k := \widetilde\bv_N^k-\bv_N^k$, $k \in \mathbb{N}$. Subtracting \eqref{fullimp} from \eqref{semimp}, we see that $\{\bta^k\}_{k\in \mathbb{N}}$ satisfies	
\begin{multline}\label{semifull}
	\frac{\bta^{k+1} - \bta^k}{\tau} + \nu A \bta^{k+1} + P_N\left[ B\left(\widetilde\bv_N^k-\widetilde\bv_N^{k+1},\widetilde\bv_N^{k+1}\right)+B\left(\widetilde\bv_N^{k+1},\bta^{k+1}\right) \right.\\
	+B\left(\bta^{k+1},\widetilde\bv_N^{k+1}\right)
	-\left.B\left(\bta^{k+1},\bta^{k+1}\right)\right]= -\beta P_N P_\sigma I_h\left(\bta^{k+1}\right).
\end{multline}
Taking the inner product of \eqref{semifull} with $2\tau\bta^{k+1}$ in $H$, using orthogonality property \eqref{propBorthog2}, inequality \eqref{displ2}, and neglecting $\left|\bta^{k+1}-\bta^k\right|^2$ from the left-hand side, we obtain that
\begin{multline*}
	\left(1+\tau\beta\right)\left|\bta^{k+1}\right|^2+\tau\nu\left\|\bta^{k+1}\right\|^2\leq \left|\bta^k\right|^2\\
	+2\tau\left|\left(B\left(\bta^{k+1},\widetilde\bv_N^{k+1}\right),\bta^{k+1}\right)\right|+2\tau\left|\left(B\left(\widetilde\bv_N^k-\widetilde\bv_N^{k+1},\widetilde\bv_N^{k+1}\right),\bta^{k+1}\right)\right|.
\end{multline*}
Now, using inequalities \eqref{ineqTiti1} and \eqref{semibdsimple} for estimating the bilinear terms $B$, we have
\begin{multline*}
	\left(1+\tau\beta\right)\left|\bta^{k+1}\right|^2+\tau\nu\left\|\bta^{k+1}\right\|^2\leq \left|\bta^k\right|^2+\frac{\tau\nu}{2}\left\|\bta^{k+1}\right\|^2+\frac{\tau\nu}{4}M_1^2\\
	+\frac{c\tau M_1^2}{\nu}\left|\bta^{k+1}\right|^2\left[1+\log\left(\frac{\left\|\bta^{k+1}\right\|^2}{\lambda_1\left|\bta^{k+1}\right|^2}\right)\right].
\end{multline*}
Proceeding as in the proof of inequality \eqref{comppp9}, we obtain
\be\label{comp31}
	\left(1+\frac{\tau\beta}{4}\right)\left|\bta^{k+1}\right|^2+\frac{\tau\nu}{4}\left\|\bta^{k+1}\right\|^2\leq \left|\bta^k\right|^2+\frac{\tau\nu}{4}M_1^2, \quad \forall k \in\mathbb{N}.
\ee
Applying the Poincar\'e inequality \eqref{Poincare} to the second term on the left-hand side of \eqref{comp31} and using Lemma \ref{discretegronwall}, yields
\be\label{semifulll2}
 \left|\bta^{k+1}\right|^2\leq \frac{\nu M_1^2}{\beta+\nu\lambda_1}, \quad \forall k \in \mathbb{N},
\ee
where we used that $\bta^0=0$. Plugging estimate \eqref{semifulll2} into \eqref{comp31}, it follows in particular that
\be\label{semifullh1}
 \tau\left\|\bta^{k+1}\right\|^2\leq \frac{4M_1^2}{\beta+\nu\lambda_1}+\tau M_1^2, \quad \forall k\in\mathbb{N}.
\ee
Next, taking the inner product of \eqref{fullimp} with $2\tau A\bv_N^{k+1}$ in $H$ and proceeding similarly as in \eqref{compp9}, we obtain
\begin{multline}\label{comp32}
	\left(1+\frac{\tau\beta}{2}\right)\left\|\bv_N^{k+1}\right\|^2+\frac{3\tau\nu}{4}\left|A\bv_N^{k+1}\right|^2 \\
	\leq \left\|\bv_N^{k}\right\|^2+\frac{8\tau}{\nu}\left|\f\right|^2+10\tau\beta M_1^2
+2\tau\left|\left(B\left(\bv_N^{k+1},\bv_N^{k+1}\right),A\bv_N^{k+1}\right)\right|\\
\leq \left\|\bv_N^{k}\right\|^2+8\tau\nu\lambda_1M_1^2+10\tau\beta M_1^2\\
+2\tau\left|\left(B\left(\widetilde\bv_N^{k+1},\bv_N^{k+1}\right),A\bv_N^{k+1}\right)\right|+2\tau\left|\left(B\left(\bta^{k+1},\bv_N^{k+1}\right),A\bv_N^{k+1}\right)\right|,
\end{multline}
where in the last inequality we used that $\vn^{k+1} = \widetilde{\bv}_N^{k+1} - \bta^{k+1}$ and $\left|\f\right|\leq \nu\lambda_1^{1/2}M_1$.

Using inequality \eqref{ineqTiti2}, along with the uniform bound \eqref{semibdsimple}, we obtain
\begin{multline}\label{comp33}
	2\tau\left|\left(B\left(\widetilde\bv_N^{k+1},\bv_N^{k+1}\right),A\bv_N^{k+1}\right)\right|\leq \frac{\tau\nu}{8}\left|A\bv_N^{k+1}\right|^2 \\
	+\frac{c\tau}{\nu}M_1^2\left\|\bv_N^{k+1}\right\|^2\left(1+\log\left(\frac{\left|A\bv_N^{k+1}\right|^2}{\lambda_1\left\|\bv_N^{k+1}\right\|^2}\right)\right).
\end{multline}
Now, using \eqref{estnonlineartermL4L4L2}, \eqref{semifulll2} and \eqref{semifullh1}, we have
\begin{multline}\label{comp33a}
	2\tau\left|\left(B\left(\bta^{k+1},\bv_N^{k+1}\right),A\bv_N^{k+1}\right)\right|\leq c\tau\left|\bta^{k+1}\right|^{1/2}\left\|\bta^{k+1}\right\|^{1/2}\left\|\bv_N^{k+1}\right\|^{1/2}\left|A\bv_N^{k+1}\right|^{3/2}\\
	\leq \frac{\tau\nu}{8}\left|A\bv_N^{k+1}\right|^2+\frac{c\tau}{\nu^3}\left|\bta^{k+1}\right|^2\left\|\bta^{k+1}\right\|^2\left\|\bv_N^{k+1}\right\|^2\\
	\leq \frac{\tau\nu}{8}\left|A\bv_N^{k+1}\right|^2+\frac{c}{\nu^3}\left(\frac{\nu M_1^2}{\beta+\nu\lambda_1}\right)\left(\tau\left\|\bta^{k+1}\right\|^2\right)\left\|\bv_N^{k+1}\right\|^2\\
	\leq \frac{\tau\nu}{8}\left|A\bv_N^{k+1}\right|^2+\frac{cM_1^2}{\nu^2\left(\beta+\nu\lambda_1\right)}\left(\frac{4M_1^2}{\beta+\nu\lambda_1}+\tau M_1^2\right)\left\|\bv_N^{k+1}\right\|^2\\
	\leq \frac{\tau\nu}{8}\left|A\bv_N^{k+1}\right|^2+\frac{cM_1^4}{\nu^2\left(\beta+\nu\lambda_1\right)^2}\left\|\bv_N^{k+1}\right\|^2+\frac{c\tau M_1^4}{\nu^2\left(\beta+\nu\lambda_1\right)}\left\|\bv_N^{k+1}\right\|^2.
\end{multline}
Moreover, using condition \eqref{propvcondbeta} on $\beta$ with a suitable absolute constant $c$, it follows from \eqref{comp33a} that
\be\label{comp34}
2\tau\left|\left(B\left(\bta^{k+1},\bv_N^{k+1}\right),A\bv_N^{k+1}\right)\right|\leq\frac{\tau\nu}{8}\left|A\bv_N^{k+1}\right|^2+\frac{3}{4}\left\|\bv_N^{k+1}\right\|^2+\frac{\tau M_1^2}{\nu}\left\|\bv_N^{k+1}\right\|^2.
\ee
Noting that the last term on the right-hand side of inequality \eqref{comp34} is dominated by the last term of inequality \eqref{comp33}, we obtain after plugging \eqref{comp34} and \eqref{comp33} into \eqref{comp32},
\begin{multline*}
	\left(1+\frac{\tau\beta}{2}\right)\left\|\bv_N^{k+1}\right\|^2+\frac{3\tau\nu}{4}\left|A\bv_N^{k+1}\right|^2\leq \left\|\bv_N^{k}\right\|^2+8\tau\nu\lambda_1M_1^2+10\tau\beta M_1^2\\
+\frac{\tau\nu}{4}\left|A\bv_N^{k+1}\right|^2+\frac{3}{4}\left\|\bv_N^{k+1}\right\|^2+\frac{c\tau}{\nu}M_1^2\left\|\bv_N^{k+1}\right\|^2\left(1+\log\left(\frac{\left|A\bv_N^{k+1}\right|^2}{\lambda_1\left\|\bv_N^{k+1}\right\|^2}\right)\right).
\end{multline*}
Adding similar terms, proceeding as in the proof of inequality \eqref{comppp9}, and using Poincar\'e inequality \eqref{Poincare} we obtain
\[
	\frac{1}{4}\left(1+\tau\left(\beta+\nu\lambda_1\right)\right)\left\|\bv_N^{k+1}\right\|^2\leq \left\|\bv_N^{k}\right\|^2+8\tau\nu\lambda_1M_1^2+10\tau\beta M_1^2,
\]
from which \eqref{fullprelh1} follows immediately.
\end{proof}

In the next theorem, we prove that any solution, $\{\vn^k\}_{k \in \mathbb{N}}$, of \eqref{fullimp} is bounded uniformly in $H$ and $V$, for all $k\in\mathbb{N}$, $\tau>0$ and $N\in\mathbb{Z^+}$. In particular, Proposition \ref{premh1bd} plays a crucial role in obtaining the uniform bound in the $V$ norm.

\begin{thm}\label{fullbounds}
Assume the hypotheses of Proposition \ref{premh1bd}. Then, for every $n \in \mathbb{N}$,
\be\label{fulll2bd}
\left|\bv_N^n\right|^2\leq \frac{\left|\bv_0\right|^2}{\left(1+\frac{\tau}{2}\left(\beta+2\nu \lambda_1\right)\right)^n}+\frac{12\left|\f\right|^2}{\beta\left(\beta+2\nu\lambda_1\right)}+\frac{12\beta M_0^2}{\beta+2\nu\lambda_1}+\frac{12\nu M_1^2}{\beta+2\nu\lambda_1}
\ee
and
\be\label{fullh1bd}
\left\|\bv_N^n\right\|^2\leq \frac{\left\|\bv_0\right\|^2}{\left(1+\frac{\tau}{4}\left(\beta+\nu\lambda_1\right)\right)^n}+\frac{24\left|\f\right|^2}{\nu\left(\beta+\nu\lambda_1\right)}+\frac{32\beta M_1^2}{\beta+\nu\lambda_1}.
\ee
In particular,
\be\label{fullbdsimple}
\left|\bv_N^n\right|\leq \lambda_1^{-1/2}\left\|\bv_N^n\right\|\leq 6\lambda_1^{-1/2}M_1 \quad \forall n \in \mathbb{N}.
\ee
\end{thm}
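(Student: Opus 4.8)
The plan is to mirror the proof of Theorem \ref{semibounds} for the semi-implicit scheme, isolating the one place where the fully-implicit discretization of the nonlinearity forces a new idea. For the $L^2$ estimate \eqref{fulll2bd} I would take the inner product of \eqref{fullimp} with $2\tau\bv_N^{k+1}$ in $H$. The decisive simplification is that the fully-implicit nonlinear term now vanishes \emph{exactly}, since by the orthogonality property \eqref{propBorthog2} we have $\langle B(\bv_N^{k+1},\bv_N^{k+1}),\bv_N^{k+1}\rangle = 0$. The forcing and nudging terms are then estimated essentially as in \eqref{comp3}--\eqref{comp4}, using \eqref{displ2}, \eqref{leftovl2} and the uniform bounds on $\bu$, which reproduces the recursion \eqref{comp4a}; the discrete Gronwall Lemma \ref{discretegronwall} yields \eqref{fulll2bd}.

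The $V$ estimate \eqref{fullh1bd} is the heart of the matter, and I would establish it by induction on $n$, as in the proof of \eqref{semih1bd}. Assuming the bound holds up to index $k$, so that in particular $\|\bv_N^k\| \leq 6M_1$, I would take the inner product of \eqref{fullimp} with $2\tau A\bv_N^{k+1}$. The forcing and nudging contributions are handled as in \eqref{comp7}--\eqref{comp8} via \eqref{disph1} and \eqref{leftovh1}. The genuine obstruction is the nonlinear term $2\tau|\langle B(\bv_N^{k+1},\bv_N^{k+1}),A\bv_N^{k+1}\rangle|$: unlike the semi-implicit case, both entries sit at time $t_{k+1}$, so applying \eqref{ineqTiti2} produces a factor $\|\bv_N^{k+1}\|^2\,|A\bv_N^{k+1}|$, whereas to run the logarithmic absorption of \eqref{comp10}--\eqref{ineqminlog} I need one factor of $\|\bv_N^{k+1}\|$ already controlled by $M_1$. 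But $\|\bv_N^{k+1}\|$ is exactly the quantity I am trying to bound, so the induction hypothesis is not directly available for it.

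This circularity is precisely what Proposition \ref{premh1bd} is designed to break. The a priori ``doubling'' estimate \eqref{fullprelh1}, together with the induction hypothesis $\|\bv_N^k\| \leq 6M_1$, delivers an unconditional bound $\|\bv_N^{k+1}\| \leq cM_1$ with $c$ an absolute constant; crucially, \eqref{fullprelh1} is independent of $\beta$, which is what lets this step proceed without any smallness restriction on $\tau$. With this preliminary control I can write $2\tau c_T\|\bv_N^{k+1}\|^2|A\bv_N^{k+1}|[\,\cdots\,]^{1/2} \leq 2c_T\tau(cM_1)\|\bv_N^{k+1}\|\,|A\bv_N^{k+1}|[\,\cdots\,]^{1/2}$ and then invoke Young's inequality to recover exactly the estimate \eqref{comp10}. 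From this point the computation is identical to the semi-implicit one: after collecting terms and applying the minimization inequality \eqref{ineqminlog} under condition \eqref{propvcondbeta} on $\beta$, I obtain the one-step inequality in the form \eqref{comppp9} with the constants displayed in \eqref{fullh1bd}, and Lemma \ref{discretegronwall} closes the induction.

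Finally, the simple estimate \eqref{fullbdsimple} follows from \eqref{fulll2bd}--\eqref{fullh1bd} by discarding the decaying first terms, applying the Poincar\'e inequality \eqref{Poincare} and the bound $|\f| \leq \nu\lambda_1^{1/2}M_1$ from \eqref{boundf}, together with $\bv_0 \in B_V(M_1)$, exactly as \eqref{semibdsimple} was deduced. I expect the main obstacle to be the fully implicit appearance of the nonlinearity in the $V$-norm step; once the $\beta$-independent a priori bound of Proposition \ref{premh1bd} is in hand, the fully-implicit analysis collapses onto the semi-implicit template.
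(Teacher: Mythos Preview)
Your proposal is correct and follows essentially the same route as the paper: the $L^2$ bound is identical to \eqref{semil2bd} thanks to \eqref{propBorthog2}, and for the $V$ bound the paper likewise uses induction, invokes \eqref{ineqTiti2}, and then applies Proposition~\ref{premh1bd} together with the induction hypothesis $\|\bv_N^k\|\leq 6M_1$ to replace one factor $\|\bv_N^{k+1}\|^2$ in $\|\bv_N^{k+1}\|^4$ by $4\|\bv_N^k\|^2+40M_1^2\leq cM_1^2$ (your phrasing ``$\|\bv_N^{k+1}\|\leq cM_1$'' is the same step), after which the logarithmic absorption \eqref{ineqminlog} and Lemma~\ref{discretegronwall} close the induction exactly as you describe.
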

\begin{proof}
As a consequence of the orthogonality property \eqref{propBorthog2}, the proof of inequality \eqref{fulll2bd} is exactly the same as that of inequality \eqref{semil2bd}; thus it will be omitted.

We prove inequality \eqref{fullh1bd} by an inductive argument similar to the one used in the proof of \eqref{semih1bd}. Notice that inequality \eqref{fullh1bd} is trivially true for $n=0$. Now, fix $n\in\mathbb{N}$ and suppose \eqref{fullh1bd} is true for $k\in\{0,\ldots,n\}$. Taking the inner product of equation \eqref{fullimp} with $2\tau A\bv_N^{k+1}$ in $H$, we obtain, similarly as in \eqref{compp9},
\begin{multline}\label{comp35}
	\left(1+\frac{\tau\beta}{2}\right)\left\|\bv_N^{k+1}\right\|^2+\frac{2\tau\nu}{3}\left|A\bv_N^{k+1}\right|^2\leq \left\|\bv_N^{k}\right\|^2+\frac{6\tau}{\nu}\left|\f\right|^2+8\tau\beta M_1^2\\
+2\tau\left|\left(B\left(\bv_N^{k+1},\bv_N^{k+1}\right),A\bv_N^{k+1}\right)\right|.
\end{multline}
In order to estimate the last term in the right-hand side of \eqref{comp35}, we use \eqref{ineqTiti2} and the preliminary inequality \eqref{fullprelh1} as follows:
\begin{multline}\label{comp36}
	2\tau\left|\left(B\left(\bv_N^{k+1},\bv_N^{k+1}\right),A\bv_N^{k+1}\right)\right|\leq c\tau\left\|\bv_N^{k+1}\right\|^2\left|A\bv_N^{k+1}\right|\left[1+\log\left(\frac{\left|A\bv_N^{k+1}\right|^2}{\lambda_1\left\|\bv_N^{k+1}\right\|^2}\right)\right]^{1/2}\\
	\leq\frac{\tau\nu}{6}\left|A\bv_N^{k+1}\right|^2+\frac{c\tau}{\nu}\left\|\bv_N^{k+1}\right\|^4\left(1+\log\left(\frac{\left|A\bv_N^{k+1}\right|^2}{\lambda_1\left\|\bv_N^{k+1}\right\|^2}\right)\right)\\
	\leq \frac{\tau\nu}{6}\left|A\bv_N^{k+1}\right|^2+\frac{c\tau}{\nu}\left(4\left\|\bv_N^k\right\|^2+40M_1^2\right)\left\|\bv_N^{k+1}\right\|^2\left(1+\log\left(\frac{\left|A\bv_N^{k+1}\right|^2}{\lambda_1\left\|\bv_N^{k+1}\right\|^2}\right)\right).
\end{multline}
Since $\left\|\bv_0\right\|\leq M_1$, it follows from the induction hypothesis, along with \eqref{boundf}, that $\left\|\bv_N^{k}\right\|\leq 6M_1$ for all $k\in \{0,\ldots,n\}$. Using this in \eqref{comp36} and plugging the resulting estimate in \eqref{comp35}, we obtain
\begin{multline*}
	\left(1+\frac{\tau\beta}{2}\right)\left\|\bv_N^{k+1}\right\|^2+\frac{\tau\nu}{2}\left|A\bv_N^{k+1}\right|^2\leq \left\|\bv_N^{k}\right\|^2+\frac{6\tau}{\nu}\left|\f\right|^2+8\tau\beta M_1^2\\
+\frac{c\tau M_1^2}{\nu}\left\|\bv_N^{k+1}\right\|^2\left(1+\log\left(\frac{\left|A\bv_N^{k+1}\right|^2}{\lambda_1\left\|\bv_N^{k+1}\right\|^2}\right)\right).
\end{multline*}
We now proceed exactly as in the proof of inequality \eqref{semih1bd} to close the inductive argument.
\end{proof}

Now, using the result of Theorem \ref{fullbounds}, we are able to prove continuous dependence on the initial data of solutions of \eqref{fullimp}. In particular, this implies uniqueness of a solution of the initial-value problem associated to \eqref{fullimp}.

\begin{thm}\label{fulldecay}
Assume hypotheses \eqref{A1}-\eqref{A4}. Consider $\bv_{N,0}, \overline{\bv}_{N,0} \in P_N H \cap B_V(M_1)$ and let $\{\bv_N^k\}_{k\in \mathbb{N}}$ and $\{\overline{\bv}_N^k\}_{k \in \mathbb{N}}$ be any two solutions of \eqref{fullimp} corresponding to $I_h(\bu)$ and with initial conditions $\bv_{N,0}$ and $\overline{\bv}_{N,0}$, respectively. Then,
\be\label{fulldecayineq}
\left\|\overline{\bv}_N^n - \bv_N^n\right\|^2 \leq \frac{\left\| \overline{\bv}_{N,0} - \bv_{N,0} \right\|^2}{\left(1+\frac{\tau}{4}\left(\beta+\nu\lambda_1\right)\right)^{n}}, \quad \forall n \in \mathbb{N}.
\ee
\end{thm}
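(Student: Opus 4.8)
The plan is to mimic the $V$-norm energy estimates used repeatedly above, but now for the difference of two solutions, exploiting the fact that the fully-implicit discretization \eqref{fullimp} evaluates both arguments of the bilinear term at the same time level. Set $\beps^k := \overline{\bv}_N^k - \bv_N^k$. Subtracting the two copies of \eqref{fullimp} and using bilinearity to write
\[
 B(\overline{\bv}_N^{k+1},\overline{\bv}_N^{k+1}) - B(\bv_N^{k+1},\bv_N^{k+1}) = B(\overline{\bv}_N^{k+1},\beps^{k+1}) + B(\beps^{k+1},\bv_N^{k+1}),
\]
one finds that $\{\beps^k\}$ satisfies
\[
 \frac{\beps^{k+1}-\beps^k}{\tau} + \nu A\beps^{k+1} + P_N\left[ B(\overline{\bv}_N^{k+1},\beps^{k+1}) + B(\beps^{k+1},\bv_N^{k+1}) \right] = -\beta P_N P_\sigma I_h(\beps^{k+1}).
\]
Crucially, every nonlinear contribution is now evaluated at the single time level $k+1$, with no term of the form $B(\beps^k,\cdot)$ present; this is precisely what removes the need for the smallness assumption $\tau\beta\leq 1$ required in the semi-implicit Theorem \ref{dependonICsemi}. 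I would then take the inner product with $2\tau A\beps^{k+1}$ in $H$, apply the Hilbert-space identity to the time-difference term to produce $\|\beps^{k+1}\|^2 + \|\beps^{k+1}-\beps^k\|^2 - \|\beps^k\|^2$, and invoke \eqref{disph1} on the nudging term to extract $-\tau\beta\|\beps^{k+1}\|^2$ at the cost of $\tau\nu|A\beps^{k+1}|^2$.

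Next I would estimate the two nonlinear terms $2\tau|(B(\overline{\bv}_N^{k+1},\beps^{k+1}),A\beps^{k+1})|$ and $2\tau|(B(\beps^{k+1},\bv_N^{k+1}),A\beps^{k+1})|$ using the logarithmic inequalities \eqref{ineqTiti2} and \eqref{ineqBG}, respectively; this pairing is chosen precisely so that both resulting logarithms are expressed in terms of $\beps^{k+1}$. Here I would use the uniform $V$-bound $\|\bv_N^{k+1}\|,\|\overline{\bv}_N^{k+1}\|\leq 6M_1$ supplied by Theorem \ref{fullbounds} (this is the point at which the $\beta$-\emph{independent} bounds of \eqref{fullbdsimple} are indispensable). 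After a Young's inequality absorbing $|A\beps^{k+1}|$ into the dissipation, both terms are dominated by $\frac{\tau\nu}{2}|A\beps^{k+1}|^2 + \frac{c\tau M_1^2}{\nu}\|\beps^{k+1}\|^2\left[1+\log\left(\frac{|A\beps^{k+1}|^2}{\lambda_1\|\beps^{k+1}\|^2}\right)\right]$, treating the harmless case $\beps^{k+1}=0$ separately as in the proof of \eqref{comppp9}.

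Collecting the estimates and moving the $|A\beps^{k+1}|^2$-contributions to the left leaves an inequality of exactly the shape treated in the proof of \eqref{comppp9}. Following that argument verbatim, I would rewrite the surviving dissipation so as to display the combination $\frac{\tau}{4}\|\beps^{k+1}\|^2\{\beta + \nu\lambda_1[x - \alpha(1+\log x)]\}$ with $x = |A\beps^{k+1}|^2/(\lambda_1\|\beps^{k+1}\|^2)\geq 1$ and $\alpha = cM_1^2/(\nu^2\lambda_1)$, bound it below via the elementary inequality \eqref{ineqminlog}, and use the logarithmic lower bound \eqref{propvcondbeta} on $\beta$ (with a suitable absolute constant) to conclude that this combination is nonnegative and may be discarded. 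After applying Poincar\'e \eqref{Poincare} to the remaining $\frac{\tau\nu}{4}|A\beps^{k+1}|^2$, this yields the clean per-step recursion
\[
 \left(1+\frac{\tau}{4}(\beta+\nu\lambda_1)\right)\|\beps^{k+1}\|^2 \leq \|\beps^k\|^2, \qquad \forall k\in\mathbb{N}.
\]
Lemma \ref{discretegronwall} applied with $b_k\equiv 0$ and $\gamma=\frac{\tau}{4}(\beta+\nu\lambda_1)$ then delivers \eqref{fulldecayineq} at once; taking $\overline{\bv}_{N,0}=\bv_{N,0}$ forces $\beps^0=0$, hence $\beps^n=0$ for all $n$, which is the asserted uniqueness.

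The only genuinely delicate point—and the main obstacle I anticipate—is verifying that the combined dissipation from the viscous and nudging terms strictly dominates the logarithmically enhanced nonlinear contribution \emph{uniformly} in $k$, $N$ and $\tau$. This hinges entirely on the $\beta$-independence of the uniform $V$-bounds from Theorem \ref{fullbounds} and on the logarithmic calibration of $\beta$ in \eqref{propvcondbeta}. Everything else is a routine repetition of the energy estimates already performed, the key structural simplification being that the implicit treatment of the nonlinearity makes the error equation local in time.
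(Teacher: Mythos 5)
Your proposal is correct and follows essentially the same route as the paper: take the $H$-inner product of the difference equation with $2\tau A\beps^{k+1}$, use \eqref{disph1} for the nudging term, control the nonlinear terms via the logarithmic inequalities \eqref{ineqTiti2} and \eqref{ineqBG} together with the $\beta$-independent uniform $V$-bounds of Theorem \ref{fullbounds}, absorb the logarithm via \eqref{ineqminlog} and \eqref{propvcondbeta} exactly as in the proof of \eqref{comppp9}, and conclude with Lemma \ref{discretegronwall}. The only (immaterial) difference is your two-term splitting $B(\overline{\bv}_N^{k+1},\beps^{k+1})+B(\beps^{k+1},\bv_N^{k+1})$, whereas the paper expands around $\bv_N^{k+1}$ and carries the extra quadratic term $B(\beps^{k+1},\beps^{k+1})$, which it then also bounds with \eqref{ineqBG}.
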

\begin{proof}
Denote $\beps^k:= \overline{\bv}_N^k - \bv_N^k$. Notice that $\{\beps^k\}_{k \in \mathbb{N}}$ satisfies
\begin{multline}\label{difffull}
\frac{\beps^{k+1}-\beps^k}{\tau}	+\nu A\beps^{k+1}+B\left(\bv_N^{k+1},\beps^{k+1}\right)+B\left(\beps^{k+1},\bv_N^{k+1}\right)+B\left(\beps^{k+1},\beps^{k+1}\right)\\
=-\beta P_{\sigma}I_h\left(\beps^{k+1}\right) \quad \forall k \geq 0.
\end{multline}
Taking the inner product of \eqref{difffull} with $2\tau A\beps^{k+1}$ in $H$, we obtain, after using \eqref{displ2},
\begin{multline}\label{difffull2}
	\left(1+\tau\beta\right)\left\|\beps^{k+1}\right\|^2+\tau\nu\left|A\beps^{k+1}\right|^2\leq \left\|\beps^{k}\right\|^2+2\tau\left|\left(B\left(\bv_N^{k+1},\beps^{k+1}\right),A\beps^{k+1}\right)\right|\\
	+2\tau\left|\left(B\left(\beps^{k+1},\bv_N^{k+1}\right),A\beps^{k+1}\right)\right|+2\tau\left|\left(B\left(\beps^{k+1},\beps^{k+1}\right),A\beps^{k+1}\right)\right|.
\end{multline}
Now, using inequality \eqref{ineqTiti2} to estimate the second term on the right-hand side of \eqref{difffull2} and \eqref{ineqBG} to estimate the third and fourth terms, along with the uniform bound of $\{\| \bv_N^k \|\}_{k \in \mathbb{N}}$ from Theorem \ref{fullbounds}, we obtain
\begin{multline*}
	\left(1+\tau\beta\right)\left\|\beps^{k+1}\right\|^2+\tau\nu\left|A\beps^{k+1}\right|^2\leq \left\|\beps^{k}\right\|^2+\frac{\tau\nu}{2}\left|A\beps^{k+1}\right|^2\\
	+\frac{c\tau}{\nu}M_1^2\left\|\beps^{k+1}\right\|^2\left(1+\log\left(\frac{\left|A\beps^{k+1}\right|^2}{\lambda_1\left\|\beps^{k+1}\right\|^2}\right)\right),
\end{multline*}	
Then, proceeding as in the proof of inequality \eqref{comppp9}, we obtain
\begin{equation}\label{difffull3}
	\left(1+\frac{\tau}{4}\left(\beta+\nu\lambda_1\right)\right)\left\|\beps^{k+1}\right\|^2\leq \left\|\beps^{k}\right\|^2, \quad \forall k \in\mathbb{N}.
\end{equation}
Finally, \eqref{fulldecayineq} follows from \eqref{difffull3} and Lemma \ref{discretegronwall}.
\end{proof}

In the next theorem, we estimate the error between a solution of \eqref{fullimp} and the corresponding continuous in time solution of \eqref{eqDA}, with respect to the norms in $H$ and $V$.

\begin{thm}\label{thmL2convFullImp}
Assume hypotheses \eqref{A1}-\eqref{A4} and suppose that $\bu$ satisfies, in addition, bound \eqref{boundderu} for $t\geq0$. Consider $\bv_{N,0} \in P_N H \cap B_V(M_1)$ and let $\vn$ and $\{\bv_N^k\}_{k\in \mathbb{N}}$ be the unique solutions of \eqref{eqDAGalerkin} and \eqref{fullimp}, respectively, corresponding to $I_h(\bu)$ and satisfying $\vn(0) = \bv_{N,0} = \bv_N^0$. Let $n_0:= \left \lceil{T_1/\tau}\right \rceil$, with $T_1$ as given in Proposition \ref{propboundtimederv}. Then, for every $n \in \mathbb{N}$ with $n \geq n_0$,
\be\label{thmL2convFIres}
\left| \vn^n - \vn(t_n) \right|^2 \leq \frac{\left| \vn^{n_0} -\vn(t_{n_0}) \right|^2}{\left(1+\frac{\tau}{4}\left(\beta+\nu\lambda_1\right)\right)^{n-n_0}}+c\tau^2\lambda_1^{-1}R_1^2,
\ee
and
\begin{multline}\label{thmH1convFIres}
\left\|\vn^n - \vn(t_n) \right\|^2\leq \frac{\left\| \vn^{n_0} -\vn(t_{n_0}) \right\|^2}{\left(1+\frac{\tau}{4}\left(\beta+\nu\lambda_1\right)\right)^{n-n_0}}\\
+\frac{c\tau^2\nu R_2^2}{\beta+\nu\lambda_1}\left[1+\frac{\beta R_1^2}{\nu R_2^2}+\frac{M_1^2R_1^2}{\nu^2R_2^2}\left(1+\log\left(\frac{M_2}{M_1\lambda_1^{1/2}}\right)\right)\right].
\end{multline}
\end{thm}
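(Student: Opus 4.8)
The plan is to mirror the two-step strategy of Theorems~\ref{l2convsemi} and~\ref{h1errorsemi}, exploiting at each stage the extra dissipation supplied by \eqref{displ2} and \eqref{disph1}, but now taking advantage of the fact that in the fully-implicit scheme both arguments of the nonlinearity sit at the same time level $t_{k+1}$. First I would derive the equation governing the error $\bd^k := \bv_N^k - \bv_N(t_k)$. Integrating \eqref{eqDAGalerkin} over $[t_k,t_{k+1}]$, dividing by $\tau$, and decomposing every term about $t_{k+1}$ — in particular writing $B(\bv_N(s),\bv_N(s)) = B(\bv_N(t_{k+1}),\bv_N(t_{k+1})) + B(\bv_N(s)-\bv_N(t_{k+1}),\bv_N(s)) + B(\bv_N(t_{k+1}),\bv_N(s)-\bv_N(t_{k+1}))$ so as to match the placement of \emph{both} factors at $t_{k+1}$ in \eqref{fullimp} — and subtracting from \eqref{fullimp}, I obtain an analogue of \eqref{comp13} in which the principal nonlinear difference is $B(\bv_N^{k+1},\bv_N^{k+1}) - B(\bv_N(t_{k+1}),\bv_N(t_{k+1})) = B(\bv_N^{k+1},\bd^{k+1}) + B(\bd^{k+1},\bv_N(t_{k+1}))$, while the right-hand side collects the Stokes, nudging, reference-solution and nonlinear time-increment remainders.

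For the $H$-estimate \eqref{thmL2convFIres} I would pair this equation with $2\tau\bd^{k+1}$. The term $B(\bv_N^{k+1},\bd^{k+1})$ drops out by orthogonality \eqref{propBorthog2}; \eqref{displ2} absorbs the nudging error and contributes the $\beta|\bd^{k+1}|^2$ dissipation; and the surviving piece $(B(\bd^{k+1},\bv_N(t_{k+1})),\bd^{k+1})$ is controlled by \eqref{ineqTiti1} exactly as in \eqref{commonBilinear}. The integral remainders are bounded essentially verbatim from the proof of Theorem~\ref{l2convsemi}, using the global Lipschitz-in-time bounds of $\bu$ and $\bv_N$ (Propositions~\ref{propboundsu} and~\ref{propboundtimederv}, valid for $t \geq T_1$, whence the restriction $k \geq n_0$) together with \eqref{leftovl2}. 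Absorbing the logarithmic factor with \eqref{ineqminlog} and condition \eqref{propvcondbeta} produces the recursion $(1+\tfrac{\tau}{4}(\beta+\nu\lambda_1))|\bd^{k+1}|^2 \leq |\bd^k|^2 + c\tau^3(\beta+\nu\lambda_1)\lambda_1^{-1}R_1^2$, and Lemma~\ref{discretegronwall} then yields \eqref{thmL2convFIres}; the final bound is identical to the semi-implicit one.

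The $V$-estimate \eqref{thmH1convFIres} is where the fully-implicit structure pays off, and this is the step I expect to demand the most care. Pairing the error equation with $2\tau A\bd^{k+1}$ and using \eqref{disph1} for the nudging term, I now have \emph{two} surviving nonlinear error terms, $(B(\bv_N^{k+1},\bd^{k+1}),A\bd^{k+1})$ and $(B(\bd^{k+1},\bv_N(t_{k+1})),A\bd^{k+1})$. The former is bounded by \eqref{ineqTiti2} (using $\|\bv_N^{k+1}\| \leq 6M_1$ from Theorem~\ref{fullbounds}) and the latter by \eqref{ineqBG} (using $\|\bv_N(t_{k+1})\| \leq 8M_1$ from \eqref{boundsv}); decisively, \emph{both} produce only logarithmically-corrected multiples of $\|\bd^{k+1}\|^2$, which \eqref{ineqminlog} and \eqref{propvcondbeta} absorb into the $\beta\|\bd^{k+1}\|^2$ dissipation. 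The integral remainders are estimated as in Theorem~\ref{h1errorsemi}, via \eqref{leftovh1}, \eqref{ineqBG} and \eqref{ineqTiti2}, now invoking the $\mathcal{D}(A)$-Lipschitz bound $|A\tfrac{\rd\bv_N}{\rd s}| \leq c_9 R_2$ from \eqref{boundsv}. I therefore arrive directly at a clean recursion $(1+\tfrac{\tau}{4}(\beta+\nu\lambda_1))\|\bd^{k+1}\|^2 \leq \|\bd^k\|^2 + c\tau^3\nu R_2^2 + c\tau^3\beta R_1^2 + c\tau^3\nu^{-1}M_1^2R_1^2[1+\log(M_2/(\lambda_1^{1/2}M_1))]$, and a single application of Lemma~\ref{discretegronwall} (the $\tfrac{1}{\gamma}\sup_k b_k$ bound with $\gamma = \tau(\beta+\nu\lambda_1)/4$) delivers \eqref{thmH1convFIres}.

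The essential contrast with the semi-implicit case, and the reason \eqref{thmH1convFIres} is sharper than \eqref{semih1errorest}, is that no lagged increment $\bd^k$ appears inside the nonlinearity here. In Theorem~\ref{h1errorsemi} the cross term $B(\bd^k,\bv_N(t_{k+1}))$ forced the use of \eqref{estnonlineartermL4L4L2} and generated a lower-order $|\bd^k|^2$ contribution, which then had to be summed against the $H$-error bound from Theorem~\ref{l2convsemi}, producing the extra $(n-n_0)$ and $M_2^2R_1^2$ terms in \eqref{semih1errorest}. Since both nonlinear factors now sit at $t_{k+1}$, that term is absent and the summation step disappears entirely. The only subtlety to watch is the trivial case $\bd^{k+1}=0$, which must be handled separately because \eqref{ineqTiti1}, \eqref{ineqTiti2} and \eqref{ineqBG} require a nonzero argument; as in the earlier proofs, the recursion holds trivially in that case.
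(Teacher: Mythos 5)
Your proposal is correct and follows essentially the same route as the paper: the only cosmetic difference is that you split the nonlinear error as $B(\bv_N^{k+1},\bd^{k+1})+B(\bd^{k+1},\bv_N(t_{k+1}))$ while the paper expands the first term further into $B(\bd^{k+1},\bd^{k+1})+B(\bv_N(t_{k+1}),\bd^{k+1})$, which is algebraically equivalent and leads to the same treatment (orthogonality in $H$, the logarithmic inequalities \eqref{ineqTiti2}/\eqref{ineqBG} in $V$, absorption via \eqref{ineqminlog} and \eqref{propvcondbeta}). Your recursions and the resulting application of Lemma \ref{discretegronwall} match the paper's \eqref{comp37d}, and your explanation of why the lagged term and hence the summation over the $H$-error history disappear is exactly the point the paper exploits.
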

\begin{proof}
Denote $\bd^k = \vn^k - \vn(t_k)$, $k \in \mathbb{N}$. Notice that
\begin{multline}\label{comp37a}
  B\left(\vn^{k+1},\vn^{k+1}\right) - B\left(\vn(s),\vn(s)\right) = \\
  \left[ B\left(\vn^{k+1},\vn^{k+1}\right)-B\left(\vn\left(t_{k+1}\right),\vn\left(t_{k+1}\right)\right) \right] + \\
  + \left[ B\left(\vn\left(t_{k+1}\right),\vn\left(t_{k+1}\right)\right) - B\left(\vn(s),\vn(s)\right)\right] \\
  = \left[ B\left(\bd^{k+1},\bd^{k+1}\right)+B\left(\vn\left(t_{k+1}\right),\bd^{k+1}\right)+B\left(\bd^{k+1},\vn\left(t_{k+1}\right)\right) \right] \\
  - \left[ B\left(\vn\left(t_{k+1}\right),\vn\left(s\right)-\vn\left(t_{k+1}\right)\right)+B\left(\vn\left(s\right)-\vn\left(t_{k+1}\right),\vn\left(s\right)\right), \right].
\end{multline}

Then, proceeding as in \eqref{comp12a}-\eqref{comp12} and using \eqref{comp37a}, we see that $\{\bd^k\}_{k \in \mathbb{N}}$ evolves according to
\begin{multline}\label{comp37}
 \frac{\bd^{k+1} - \bd^k}{\tau} + \nu A \bd^{k+1}=
  \frac{\nu}{\tau} \int_{t_k}^{t_{k+1}} A(\bv_N(s) - \bv_N(t_{k+1})) \rd s\\
 -P_N\left[B\left(\bd^{k+1},\bd^{k+1}\right)+B\left(\vn\left(t_{k+1}\right),\bd^{k+1}\right)+B\left(\bd^{k+1},\vn\left(t_{k+1}\right)\right)\right]+ \\
 + \frac{1}{\tau} \int_{t_k}^{t_{k+1}} P_N B\left(\vn\left(t_{k+1}\right),\vn(s)-\vn\left(t_{k+1}\right) \right) \rd s\\
 + \frac{1}{\tau} \int_{t_k}^{t_{k+1}} P_N B\left(\vn(s) - \vn\left(t_{k+1}\right) , \vn(s) \right) \rd s \\
 + \frac{\beta}{\tau} \int_{t_k}^{t_{k+1}}  P_N P_\sigma I_h\left(\bv_N\left(s\right) - \bv_N\left(t_{k+1}\right)\right) \rd s \\
 + \frac{\beta}{\tau} \int_{t_k}^{t_{k+1}}  P_NP_\sigma I_h\left(\bu\left(t_{k+1}\right)-\bu\left(s\right)\right) \rd s-\beta P_NP_{\rho}I_h\left(\bd^{k+1}\right),
\end{multline}
for every $k\in \mathbb{N}$. Taking the inner product of \eqref{comp37} with $2 \tau \bd^{k+1}$ in $H$ for $k\geq n_0$, we see that all the terms can be handled in exactly the same way as in the proof of Theorem \ref{l2convsemi}, except for the ones involving the bilinear terms from the second line of \eqref{comp37}. From these, the first two ones vanish by virtue of orthogonality property \eqref{propBorthog2}, and the third one is estimated as in \eqref{commonBilinear}. We omit the details.

In order to prove error estimate \eqref{thmH1convFIres}, we first take the inner product of \eqref{comp37} with $2\tau A\bd^{k+1}$ in $H$. Again, we notice that all the terms can be handled as in the proof of Theorem \ref{h1errorsemi}, except for the ones involving the bilinear terms from the second line of \eqref{comp37}. Therefore, proceeding analogously, we obtain
\begin{multline}\label{comp37b}
	\left(1+\frac{\tau\beta}{2}\right)\left\|\bd^{k+1}\right\|^2+\frac{11 \tau\nu}{16}\left|A\bd^{k+1}\right|^2-\left\|\bd^k\right\|^2\leq 2\tau\left|\left(B\left(\bd^{k+1},\bd^{k+1}\right),A\bd^{k+1}\right)\right|\\
	+2\tau\left|\left(B\left(\bd^{k+1},\bv_N\left(t_{k+1}\right)\right),A\bd^{k+1}\right)\right|+2\tau\left|\left(B\left(\bv_N\left(t_{k+1}\right),\bd^{k+1}\right),A\bd^{k+1}\right)\right|\\
	+c\tau^3\frac{M_1^2R_1^2}{\nu}\left[1+\log\left(\frac{M_2}{\lambda_1^{1/2}M_1}\right)\right]+c\tau^3\nu R_2^2+c\tau^3\beta R_1^2.
\end{multline}
By Proposition \ref{propboundtimederv} and Theorem \ref{fullbounds}, we have
\be\label{comp37c}
  \left\|\bd^{k}\right\|\leq c M_1, \quad \forall k \in \mathbb{N}.
\ee

Using \eqref{ineqBG} and \eqref{comp37c}, we estimate the first term on the right-hand side of \eqref{comp37b} as
\begin{multline*}
2\tau\left|\left(B\left(\bd^{k+1},\bd^{k+1}\right),A\bd^{k+1}\right)\right|+2\tau\left|\left(B\left(\bd^{k+1},\bv_N\left(t_{k+1}\right)\right),A\bd^{k+1}\right)\right|\leq \\
c\tau M_1\left\|\bd^{k+1}\right\|\left|A\bd^{k+1}\right|\left[1+\log\left(\frac{\left|A\bd^{k+1}\right|^2}{\lambda_1\left\|\bd^{k+1}\right\|^2}\right)\right]^{1/2}\\
\leq \frac{\tau\nu}{8}\left|A\bd^{k+1}\right|^2+\frac{c\tau M_1^2}{\nu}\left\|\bd^{k+1}\right\|^2\left[1+\log\left(\frac{\left|A\bd^{k+1}\right|^2}{\lambda_1\left\|\bd^{n+1}\right\|^2}\right)\right].
\end{multline*}
The second and third terms on the right-hand side of \eqref{comp37b} are estimated similarly, but using \eqref{ineqTiti2} instead. Now, proceeding as in the proof of inequality \eqref{comppp9}, using Poincar\'e inequality \eqref{Poincare} and some algebraic manipulations, we obtain
\begin{multline}\label{comp37d}
	\left(1+\frac{\tau}{4}\left(\beta+\nu\lambda_1\right)\right)\left\|\bd^{k+1}\right\|^2\leq \left\|\bd^{k}\right\|^2 \\
	+c\tau^3\nu R_2^2\left[1+\frac{\beta R_1^2}{\nu R_2^2}+\frac{M_1^2R_1^2}{\nu^2R_2^2}\left(1+\log\left(\frac{M_2}{M_1\lambda_1^{1/2}}\right)\right)\right] \quad \forall k \geq n_0.
\end{multline}
Finally, \eqref{thmH1convFIres} follows from \eqref{comp37d} and Lemma \ref{discretegronwall}.
\end{proof}

Finally, we consider a fully discrete approximation of a solution $\vn$ of \eqref{eqDA} by using the fully implicit in time Euler scheme \eqref{fullimp} and the Postprocessing Galerkin method. Then, combining the results from Theorems \ref{thmerrorPPGM} and \ref{thmL2convFullImp}, we obtain estimates of the error, in the $H$ and $V$ norms, between this fully discrete (in space and time) approximation of a solution $\bv$ of \eqref{eqDA} and the corresponding reference solution $\bu$ of \eqref{eqNSE}.

\begin{thm}\label{thmerrorfulldiscFI}
	Assuming the hypothesis of Theorems \ref{thmerrorPPGM} and \ref{thmL2convFullImp}, there exists $T_4 = T_4(\nu,\lambda_1,|\f|,N, \tau) \geq 0$ such that, for every $n \geq \lceil T_4/ \tau \rceil $,
	\be\label{errorfulldiscFIH}
	|\vn^n + \Phi_1(\vn^n) - \bu(t_n)| \leq c\tau\lambda_1^{-1/2} R_1 +  C \frac{L_N}{\lambda_{N+1}^{5/4}}
	\ee
	and
	\begin{multline}\label{errorfulldiscFIV}
		\|\vn^n + \Phi_1(\vn^n) - \bu(t_n)\| \leq \\
		\leq \frac{c\tau \nu^{1/2} R_2}{(\beta+\nu\lambda_1)^{1/2}}\left[1+\frac{\beta R_1^2}{\nu R_2^2}+\frac{M_1^2R_1^2}{\nu^2R_2^2}\left(1+\log\left(\frac{M_2}{M_1\lambda_1^{1/2}}\right)\right)\right]^{1/2} + C \frac{L_N}{\lambda_{N+1}^{3/4}},
	\end{multline}
	where $C$ is a constant depending on $\nu$, $\lambda_1$, $|\f|$ and $1/h^2$, but independent of $N$.
\end{thm}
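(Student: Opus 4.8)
The plan is to mirror the proof of Theorem~\ref{thmerrorfulldiscSI} essentially verbatim, the only change being that the semi-implicit time-discretization bounds from Theorems~\ref{l2convsemi} and \ref{h1errorsemi} are replaced by their fully-implicit analogues \eqref{thmL2convFIres} and \eqref{thmH1convFIres} of Theorem~\ref{thmL2convFullImp}. Let $\vn$ be the unique solution of \eqref{eqDAGalerkin} corresponding to $I_h(\bu)$ and satisfying $\vn(0)=\bv_{N,0}$.

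First I would split the total error by inserting the continuous-in-time Galerkin solution and its postprocessed value. Using the Lipschitz bound \eqref{Phi1LipsH} for $\Phi_1$ in the $H$ norm,
\begin{align*}
 |\vn^n + \Phi_1(\vn^n) - \bu(t_n)|
 &\leq |\vn^n - \vn(t_n)| + |\Phi_1(\vn^n) - \Phi_1(\vn(t_n))| \\
 &\quad + |\vn(t_n) + \Phi_1(\vn(t_n)) - \bu(t_n)| \\
 &\leq (1+l)\,|\vn^n - \vn(t_n)| + |\vn(t_n) + \Phi_1(\vn(t_n)) - \bu(t_n)|,
\end{align*}
and analogously in the $V$ norm using \eqref{Phi1LipsV}. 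The last term on the right is precisely the postprocessing Galerkin error, which Theorem~\ref{thmerrorPPGM} bounds by $C L_N/\lambda_{N+1}^{5/4}$ in $H$ (resp.\ $C L_N/\lambda_{N+1}^{3/4}$ in $V$) for all $t \geq T_2$. The factor $1+l$, with $l = C\lambda_{N+1}^{-1/4}$, is bounded and is absorbed into the constants.

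Next I would invoke Theorem~\ref{thmL2convFullImp} to control the time-discretization error $|\vn^n-\vn(t_n)|$ (resp.\ $\|\vn^n-\vn(t_n)\|$). Each of \eqref{thmL2convFIres}, \eqref{thmH1convFIres} has the form of a geometrically decaying term in $n-n_0$ plus a persistent term of order $\tau^2$, whose square root produces exactly the first term of \eqref{errorfulldiscFIH} (resp.\ of \eqref{errorfulldiscFIV}, with the appropriate prefactor). Since $\beta+\nu\lambda_1>0$, the factor $(1+\tfrac{\tau}{4}(\beta+\nu\lambda_1))^{-(n-n_0)}\to 0$ as $n\to\infty$, so one may fix a threshold, depending on $\nu,\lambda_1,|\f|,N,\tau$, beyond which the decaying term is dominated by the persistent $\tau^2$ term. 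Taking $T_4$ to be the maximum of this threshold and $T_2$, then taking square roots and collecting constants, yields \eqref{errorfulldiscFIH} and \eqref{errorfulldiscFIV} for all $n \geq \lceil T_4/\tau\rceil$.

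The analysis here is entirely routine; the only point requiring care is the bookkeeping for $T_4$. One must ensure that a single threshold makes both the postprocessing estimate (valid for $t\geq T_2$) and the dominance of the decaying geometric term hold simultaneously, which is exactly why $T_4$ is permitted to depend on $N$ and $\tau$ in addition to the physical parameters $\nu,\lambda_1,|\f|$. No estimate beyond those already established in Theorems~\ref{thmerrorPPGM} and \ref{thmL2convFullImp} is needed, so I expect no genuine obstacle.
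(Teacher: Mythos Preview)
Your proposal is correct and follows exactly the approach the paper intends: the paper does not write out a separate proof for Theorem~\ref{thmerrorfulldiscFI} but simply states that it follows by combining Theorems~\ref{thmerrorPPGM} and \ref{thmL2convFullImp}, and the explicit argument is the one already displayed in the proof of Theorem~\ref{thmerrorfulldiscSI}, which is precisely what you reproduce with the obvious substitution of \eqref{thmL2convFIres}--\eqref{thmH1convFIres} for \eqref{semil2errorest}--\eqref{semih1errorest}.
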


\setcounter{equation}{0}
\appendix

\section*{Appendix}
\renewcommand{\thesection}{A}
\renewcommand{\theequation}{{A.}\arabic{equation}}

We now present a proof of Proposition \ref{propboundtimederv}. We start with some related terminology. For every vector space $X$, we denote its complexification by $X_\mathbb{C}$, i.e.
\[
 X_{\mathbb{C}} = \{ \bu + i \bv \,: \, \bu \in X\,, \,\, \bv \in X \}.
\]
Similarly, if $\mathcal{T}:X \rightarrow Y$ is a linear map between vector spaces $X$ and $Y$, we denote by $\mathcal{T}_\mathbb{C} : X_{\mathbb{C}} \to Y_{\mathbb{C}}$ its complexification, given by
\[
 \mathcal{T}_{\mathbb{C}} (\bu + i \bv ) = \mathcal{T}(\bu) + i \mathcal{T}(\bv) \quad \forall \bu, \bv \in X.
\]

Consider $\bu_0 \in H$ and let $\bu$ be the solution of \eqref{eqNSE} on $(0, \infty)$ satisfying $\bu(0) = \bu_0$. It was proven in \cite{foiastemam1979} (see also \cite[Chapter 12]{ConstantinFoiasbook}, \cite{FoiasJollyLanRupamYangZhang2014} and \cite{FoiasManleyTemam1988}) that there exists a neighborhood $\mB$ of $(0,\infty)$ in $\mathbb{C}$ and a unique extension of $\bu$ to $\mB \cup \{0\}$ given by the unique solution of
\be\label{compeqNSE}
\frac{\rd \widetilde\bu}{\rd \xi}(\xi) + \nu A_\mathbb{C} \widetilde\bu(\xi) + B_\mathbb{C}\left(\widetilde\bu(\xi),\widetilde\bu(\xi)\right) = \f, \quad \xi \in \mathcal{B},
\ee
\be\label{compeqNSEic}
\widetilde{\bu}(0) = \bu_0.
\ee
Moreover, $\widetilde{\bu}$ is an analytic $\mD(A)_{\mathbb{C}}$-valued function on $\mB$.


The next proposition provides uniform bounds of $\widetilde{\bu}$ and $\rd \widetilde{\bu}/\rd \xi$ with respect to the norm in $V_{\mathbb{C}}$, which are valid on suitable subsets of $\mB \cup \{0\}$. The proof is given in \cite[Appendix]{FoiasManleyTemam1988}. From now on, for simplicity, we abuse notation and drop the subindex ``$\mathbb{C}$'' from the complexified form of the functional spaces and operators.

First, let us consider $T_{0,1} = T_{0,1}(\nu, \lambda_1, G, |\bu_0|) \geq 0$ such that (see Proposition \ref{propboundsu})
\be\label{bounduV}
 \|\bu(t)\| \leq M_1 \quad \forall t \geq T_{0,1}.
\ee

\begin{prop}\label{apppropboundstildeu}
 Let $\bu_0 \in H$ and let $\widetilde{\bu}$ be the unique solution of \eqref{compeqNSE} on $\mB$ satisfying $\widetilde{\bu}(0) = \bu_0$. Then,
 \be\label{boundtildeuV}
   \|\widetilde{\bu}(\xi)\| \leq 2 M_1 \quad \forall \xi \in \mB_1^0 \subset \mB \cup \{0\},
 \ee
 where
 \be
   \mB_1^0 = \left\{ \xi = t_0 + s \Exp^{i\theta} \in \mathbb{C} \,:\, t_0 \geq T_{0,1}, \theta \in [-\pi/4,\pi/4], s \in [0,\rho]\right\},
 \ee
 with $T_{0,1} \geq 0$ being the same from \eqref{bounduV} and $\rho$ defined by
 \be\label{defrhoradiusanal}
  \rho:= \cos \theta \left\{ c\nu \lambda_1 \left( G + \frac{M_1^2}{\nu^2 \lambda_1} \right) \left[ 1 + \log\left( G + \frac{M_1^2}{\nu^2 \lambda_1} \right) \right]\right\}^{-1} .
 \ee
 Moreover, for every subset $K \subset \mB_1^0$ such that $r = \dist(K,\partial \mB_1^0) > 0$, we have
 \be\label{bounddertildeuV}
   \left\| \frac{\rd \widetilde{\bu}}{\rd \xi}(\xi) \right\| \leq c \frac{M_1}{r} \quad \forall \xi \in K.
 \ee
\end{prop}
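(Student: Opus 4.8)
The plan is to prove the $V$-norm bound \eqref{boundtildeuV} by a continuation (bootstrap) argument carried out along the rays that define $\mB_1^0$, and then to deduce the derivative bound \eqref{bounddertildeuV} from the Cauchy integral formula for the $V$-valued analytic map $\widetilde\bu$.

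First I would fix $t_0 \geq T_{0,1}$ and $\theta \in [-\pi/4,\pi/4]$ and parametrize the ray by $\xi(s) = t_0 + s\Exp^{i\theta}$, $s \geq 0$. Since $\widetilde\bu$ agrees with the real solution $\bu$ on the real axis, the bound \eqref{bounduV} gives $\|\widetilde\bu(\xi(0))\| = \|\bu(t_0)\| \leq M_1$ at $s=0$. Writing $g(s) := \|\widetilde\bu(\xi(s))\|^2$ and using that $\widetilde\bu$ solves \eqref{compeqNSE}, so that $\frac{\rd}{\rd s}\widetilde\bu(\xi(s)) = \Exp^{i\theta}(\f - \nu A\widetilde\bu - B(\widetilde\bu,\widetilde\bu))$ lies in $H$ (recall $\widetilde\bu$ is $\mD(A)$-valued), I would differentiate $g$ and pair the equation against $A\widetilde\bu$ in $H$, taking real parts:
\[
 \tfrac12 g'(s) = \mathrm{Re}\,\Exp^{i\theta}\left( \f - \nu A\widetilde\bu - B(\widetilde\bu,\widetilde\bu),\, A\widetilde\bu\right) \leq -\nu\cos\theta\,|A\widetilde\bu|^2 + |\f|\,|A\widetilde\bu| + \left|(B(\widetilde\bu,\widetilde\bu),A\widetilde\bu)\right|.
\]
The crucial sign comes from $\cos\theta \geq 2^{-1/2} > 0$ on the admissible angular sector, which keeps the viscous term dissipative. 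I would bound the nonlinear term by the complexified version of \eqref{ineqBG} (the logarithmic inequalities extend verbatim to $V_{\mathbb{C}}$, with the same absolute constants), apply Young's inequality to absorb two copies of $\tfrac{\nu\cos\theta}{4}|A\widetilde\bu|^2$, and then eliminate the $|A\widetilde\bu|^2$ remaining inside the logarithm using the elementary inequality \eqref{ineqminlog}. On the set where $\|\widetilde\bu\| \leq 2M_1$ this yields a differential inequality of the form $g'(s) \leq c\,\tfrac{M_1^4}{\nu\cos\theta}\,[1 + \log(M_1^2/(\nu^2\lambda_1))]$, a constant depending only on the physical parameters and $\theta$.

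With this in hand I would run the standard bootstrap: let $s^\ast$ be the first $s$ at which $g(s) = (2M_1)^2$ (with $s^\ast = +\infty$ if no such $s$ exists). On $[0,s^\ast]$ the a priori bound $\|\widetilde\bu\| \leq 2M_1$ holds, so integrating gives $3M_1^2 = g(s^\ast) - g(0) \leq s^\ast\, c\,\tfrac{M_1^4}{\nu\cos\theta}[1+\log(\cdots)]$, hence $s^\ast \geq c^{-1}\tfrac{3\nu\cos\theta}{M_1^2}[1+\log(\cdots)]^{-1}$. Recalling $G \geq 1$ and $M_1 \geq \nu\lambda_1^{1/2}G$, so that $G + M_1^2/(\nu^2\lambda_1)$ is comparable to $M_1^2/(\nu^2\lambda_1)$, this lower bound is precisely (up to the absolute constant absorbed into $c$) the radius $\rho$ of \eqref{defrhoradiusanal}; therefore $g(s) < 4M_1^2$ throughout $[0,\rho]$, which is \eqref{boundtildeuV}. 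The same argument simultaneously continues $\widetilde\bu$ along the whole segment, justifying the inclusion $\mB_1^0 \subset \mB \cup \{0\}$.

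Finally, for \eqref{bounddertildeuV} I would use analyticity directly: for $\xi \in K$ with $\dist(K,\partial\mB_1^0) = r$, the closed disk $\{|\zeta-\xi| \leq r'\} \subset \mB_1^0$ for every $r' < r$, so the Cauchy integral formula for the $V$-valued analytic function $\widetilde\bu$ gives $\frac{\rd\widetilde\bu}{\rd\xi}(\xi) = \frac{1}{2\pi i}\oint_{|\zeta-\xi|=r'}\frac{\widetilde\bu(\zeta)}{(\zeta-\xi)^2}\,\rd\zeta$; taking $V$-norms and using \eqref{boundtildeuV} yields $\|\frac{\rd\widetilde\bu}{\rd\xi}(\xi)\| \leq 2M_1/r'$, and letting $r' \uparrow r$ gives \eqref{bounddertildeuV}. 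I expect the main obstacle to be the first part: the careful bookkeeping of the complex (Hermitian) inner products needed to extract $-\nu\cos\theta|A\widetilde\bu|^2$ from the viscous term, combined with the logarithmic absorption that makes the resulting lower bound on $s^\ast$ match the stated $\rho$; the derivative estimate is then a routine consequence of analyticity.
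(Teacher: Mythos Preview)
The paper does not actually provide a proof of this proposition; it is stated with the remark ``The proof is given in \cite[Appendix]{FoiasManleyTemam1988}.'' Your argument is correct and, moreover, mirrors exactly the technique the paper does spell out for the analogous bound on $\tvN$ in Proposition~\ref{apppropboundstildevN}: pairing the complexified equation with $A\widetilde\bu$, multiplying by $\Exp^{i\theta}$ and taking real parts to extract $-\nu\cos\theta|A\widetilde\bu|^2$, estimating the nonlinear term via \eqref{ineqBG}, absorbing the logarithm through \eqref{ineqminlog}, running a continuation argument in $s$ to reach the radius $\rho$, and finally invoking the Cauchy integral formula for the derivative bound. So your proposal is essentially the proof the cited reference gives and that the paper reproduces (in the $\tvN$ setting) in its Appendix.
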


\begin{rmk}
Choosing, for example, $K = [ T_{0,1} + \rho/\sqrt{2}, \infty)$ and noticing that $r = \dist(K,\partial \mB_1^0) = \rho/2$, we see that the uniform bound \eqref{boundderu} from Proposition \ref{propboundsu} follows from \eqref{bounddertildeuV} provided $T_0 \geq T_{0,1} + \rho/\sqrt{2}$.
\end{rmk}

In order to prove Proposition \ref{propboundtimederv}, we consider a solution  $\bu$ of \eqref{eqNSE} on $[0, \infty)$ satisfying
\be\label{conduboundV}
 \|\bu(t)\| \leq M_1 \quad \forall t \geq 0.
\ee
In this case, similarly as in Proposition \ref{apppropboundstildeu}, one can show that the unique extension of $\bu$ to $\mB \cup \{0\} \subset \mathbb{C}$ satisfies
\be\label{boundtildeuV2}
\|\widetilde{\bu}(\xi)\| \leq 2 M_1 \quad \forall \xi \in \mB_1 \subset \mB \cup \{0\},
\ee
where
\be\label{deftildeB1}
  \mB_1 = \left\{ \xi = t_0 + s \Exp^{i\theta} \in \mathbb{C} \,:\, t_0 \geq 0, \theta \in [-\pi/4,\pi/4], s \in [0,\rho]\right\}.
\ee

Notice now that equation \eqref{eqDAGalerkin} is a finite dimensional ODE. Classical ODE theory tells us that the complexified version of the equation (when complemented with an initial value) has a unique solution that is analytic in some neighborhood $\widetilde{\mB}$ of $(0,\infty)$ such that  $\widetilde{\mB} \subset \mB$ (since the analyticity of $\widetilde{\bu}$ now controls the analyticity of the right-hand side). That is to say, given $\bv_0 \in H$ and $\vn$ the unique solution of \eqref{eqDAGalerkin} corresponding to $I_h (\bu)$ and satisfying $\bv_N(0) = P_N \bv_0$, there exists a neighborhood $\widetilde{\mB}$ of $(0,\infty)$, with $\widetilde{\mB} \subset \mB$, and a unique analytic extension of $\vn$ to $\widetilde{\mB} \cup \{0\}$, $\widetilde\bv_N:\widetilde{\mB} \cup \{0\} \rightarrow\mD(A)_{\mathbb{C}}$, that satisfies
\begin{multline}\label{compeqDAGalerkin1}
	\frac{\rd \widetilde{\bv_N}}{\rd \xi} (\xi)+ \nu A\widetilde{\bv_N}(\xi) + P_N B\left(\widetilde{\bv_N}(\xi),\widetilde{\bv_N}(\xi)\right) \\
	= P_N \f - \beta P_N P_{\sigma} I_{h}\left(\widetilde{\bv_N}(\xi) - \widetilde{\bu}(\xi)\right), \quad \xi \in \widetilde{\mB},
\end{multline}
\be\label{compeqDAGalerkin2}
\widetilde{\bv_N}(0)= P_N \bv_0.
\ee
We prove in Proposition \ref{apppropboundstildevN} below that the set $\widetilde{\mB}$ does not depend on $N\in\mathbb{Z^+}$ by obtaining uniform bounds of the solution and its derivative in various norms on some subsets of $\widetilde{\mB} \cup \{0\}$ that do not depend on $N$. We remark that the proof of \eqref{boundtildevNDA}, below, follows by a slight modification of the argument used in \cite[Lemma 4.4]{FoiasJollyLanRupamYangZhang2014}.
\begin{prop}\label{apppropboundstildevN}
Assume hypotheses \eqref{A1}-\eqref{A3}, and let $\widetilde{\bu}$ be the unique solution of \eqref{compeqNSE} satisfying $\widetilde{\bu}(0) = \bu(0)$. Consider $\bv_0 \in B_V(M_1)$ and, given $N \in \mathbb{Z^+}$, let $\widetilde{\bv_N}$ be the unique solution of \eqref{compeqDAGalerkin1}-\eqref{compeqDAGalerkin2}. Then,
 \be\label{boundtildevNV}
   \|\widetilde{\bv_N}(\xi)\| \leq 13 M_1 \quad \forall \xi \in \mB_1,
 \ee
with $\mB_1$ as defined in \eqref{deftildeB1}, and
 \be\label{boundtildevNDA}
  |A \widetilde{\bv_N}(\xi)| \leq c M_2  \quad \forall \xi \in \mB_2,
 \ee
where
 \be
   \mB_2 = \left\{ \xi \in \mathbb{C} \,:\, |\Re(\xi)|\geq \frac{\rho}{\sqrt{2}}, \,\,  |\Im(\xi)| \leq \frac{\rho}{2 \sqrt{2}} \right\} \subset \mB_1.
 \ee
Moreover, for every subsets $K_1 \subset \mB_1$ and $K_2 \subset \mB_2$, with $r_1 = \dist(K_1,\partial \mB_1) > 0$ and $r_2 = \dist(K_2, \partial \mB_2) > 0$, we have
 \be\label{bounddertildevNV}
   \left\| \frac{\rd \widetilde{\bv_N}}{\rd \xi}(\xi) \right\| \leq c \frac{M_1}{r_1} \quad \forall \xi \in K_1
 \ee
and
 \be\label{bounddertildevNDA}
   \left|A \frac{\rd \widetilde{\bv_N}}{\rd \xi}(\xi) \right| \leq c \frac{M_2}{r_2} \quad \forall \xi \in K_2,
 \ee
 where $M_1$ and $M_2$ are as in Proposition \ref{propboundtimederv}.
\end{prop}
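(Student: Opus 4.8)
The plan is to follow the complex-time (Foias--Temam) method, treating $\tvN$ as an analytic $\mD(A)$-valued function on the sector-like set $\mB_1$ and proving the four bounds in the order \eqref{boundtildevNV}, \eqref{boundtildevNDA}, \eqref{bounddertildevNV}, \eqref{bounddertildevNDA}, keeping every constant independent of $N$ so that $\widetilde{\mB}\supset\mB_1$ uniformly in $N$. First I would record the real-variable base case: under hypotheses \eqref{A1}--\eqref{A3} and $\bv_0\in B_V(M_1)$, the real solution $\bv_N$ of \eqref{eqDAGalerkin} obeys a uniform bound $\|\bv_N(t)\|\le 6M_1$ for every $t\ge 0$. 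This follows from exactly the energy argument used for \eqref{semih1bd} (inner product with $A\bv_N$, the nudging term absorbed via \eqref{disph1}--\eqref{leftovh1}, the nonlinear term by the logarithmic inequality \eqref{ineqTiti2}, and condition \eqref{propvcondbeta}), and by analytic continuation it supplies the value of $\|\tvN\|$ at the real base point of each ray.

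For \eqref{boundtildevNV} I would foliate $\mB_1$ by the rays $\xi=t_0+s\Exp^{i\theta}$, with $t_0\ge 0$, $\theta\in[-\pi/4,\pi/4]$, $s\in[0,\rho]$, as in \eqref{deftildeB1}, and set $\phi(s)=\tvN(t_0+s\Exp^{i\theta})$, so that $\phi'(s)=\Exp^{i\theta}\,\rd\tvN/\rd\xi$. Taking the real part of the $V$-pairing of \eqref{compeqDAGalerkin1} against $A\phi$ gives $\tfrac12\tfrac{\rd}{\rd s}\|\phi\|^2=-\nu\cos\theta\,|A\phi|^2+\Re(\Exp^{i\theta}(\cdots))$, where $\cos\theta\ge 1/\sqrt2$ keeps the dissipation strictly positive. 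The nonlinear contribution is estimated by the complexified \eqref{ineqTiti2}, the nudging contribution by the complexified \eqref{disph1}--\eqref{leftovh1} (with $\widetilde{\bu}$ controlled through \eqref{boundtildeuV2}, i.e. $\|\widetilde{\bu}\|\le 2M_1$), and the forcing by \eqref{boundf}. This yields a scalar differential inequality for $\|\phi(s)\|^2$ valid as long as $\|\phi(s)\|\le 13M_1$; since the radius $\rho$ in \eqref{defrhoradiusanal} is precisely calibrated so that the growth over $s\in[0,\rho]$ cannot carry $6M_1$ past $13M_1$, a continuity/bootstrap argument closes the bound on each ray, hence on all of $\mB_1$.

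For the $\mD(A)$-bound \eqref{boundtildevNDA} I would exploit parabolic smoothing rather than differentiate the equation, e.g. through the Duhamel (mild) form of \eqref{compeqDAGalerkin1}, writing $\tvN$ via the analytic semigroup $\Exp^{-\zeta\mL}$ generated by $\mL:=\nu AP_N+\beta P_NP_\sigma I_h$ (the same operator whose regularizing estimates appear in the Postprocessing Galerkin analysis). Using the smoothing bounds of $\Exp^{-\zeta\mL}$ in the sector $|\arg\zeta|\le\pi/4$, the linear part yields $|A\tvN|\lesssim M_1/(\nu\rho)^{1/2}\sim M_1^2\Lambda^{1/2}/\nu$, the nonlinear Duhamel integral is controlled by the already-established $V$-bound together with \eqref{estnonlineartermL4L4L2}, and the nudging integral (where $I_h(\widetilde{\bu})$ sits inside the integral against the semigroup) contributes the $\beta^{1/2}$ term of $M_2$; the restriction $|\Re(\xi)|\ge\rho/\sqrt2$ in the definition of $\mB_2$ is exactly what guarantees an elapsed complex time $\gtrsim\rho$ so these smoothing factors are finite. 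Finally, \eqref{bounddertildevNV} and \eqref{bounddertildevNDA} follow from Cauchy's integral formula applied to the $V$- and $\mD(A)$-valued analytic function $\tvN$ on $\mB_1$ and $\mB_2$: each point of a subset $K_i$ at distance $r_i$ from the boundary is the center of a disk of radius $r_i$ on which the function bound already holds, giving $\|\rd\tvN/\rd\xi\|\le cM_1/r_1$ and $|A\,\rd\tvN/\rd\xi|\le cM_2/r_2$.

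The main obstacle is obtaining \eqref{boundtildevNDA} with an $N$-independent constant. The genuinely two-dimensional difficulty in the nonlinear term is tamed, as elsewhere in the paper, by the logarithmic inequalities \eqref{ineqBG}, \eqref{ineqTiti2} fed with the $V$-bound. The subtler point is the feedback (nudging) term: because $I_h$ is only assumed to satisfy the $L^2$ approximation property \eqref{propIh}, one cannot pair $I_h(\cdot)$ against $A^2\tvN$ (equivalently, apply a version of \eqref{disph1} one regularity level higher) without incurring a factor of $\lambda_N$, which would destroy the $N$-uniformity. Keeping $I_h$ always paired with an $L^2$ function -- which is precisely what the Duhamel/semigroup representation achieves, since there $I_h(\widetilde{\bu})$ is never differentiated -- is therefore the crux, and it is also why the higher-regularity bound is claimed only on the strictly interior strip $\mB_2$.
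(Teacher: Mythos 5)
Your treatment of \eqref{boundtildevNV} (foliation of $\mB_1$ by rays $\xi=t_0+s\Exp^{i\theta}$, energy estimate against $A\widetilde{\bv_N}$ with $\cos\theta\geq 1/\sqrt{2}$, logarithmic inequalities plus \eqref{propvcondbeta}, and a continuity/bootstrap argument calibrated to $\rho$) and of \eqref{bounddertildevNV}, \eqref{bounddertildevNDA} via Cauchy's integral formula is exactly the paper's argument. The gap is in \eqref{boundtildevNDA}. Your Duhamel/analytic-semigroup route does not close, for two concrete reasons. First, the operator $\nu AP_N+\beta P_NP_\sigma I_h$ is a non-self-adjoint perturbation of the Stokes operator whose $\mL(P_NH)$-norm is only controlled by $\beta(1+c_0^{1/2}h\lambda_N^{1/2})$ (property \eqref{propIh} bounds $I_h$ from $H^1$ to $L^2$, not on $L^2$), so $N$-uniform sectorial smoothing estimates in the half-angle-$\pi/4$ sector are not available off the shelf; the semigroup appearing in the Postprocessing analysis is $\Exp^{-t(\nu AP_N+\beta P_N)}$, with $\beta P_N$ rather than $\beta P_NP_\sigma I_h$. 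Second, and more fundamentally, even granting good semigroup bounds, $\|A\Exp^{-s\nu A}\|_{\mL(H)}\sim(\nu s)^{-1}$ is not integrable at $s=0$, so a forcing that is merely bounded in $H$ along the path (which is all you have: $\beta P_NP_\sigma I_h(\widetilde{\bv_N}-\widetilde{\bu})$ lives only in $L^2$, and any pointwise $H$-bound on $B(\widetilde{\bv_N},\widetilde{\bv_N})$ via \eqref{estnonlineartermL4L4L2} already requires the $\mD(A)$ bound you are trying to prove) yields at best $\mD(A^{1-\epsilon})$ regularity by Duhamel, not a pointwise bound on $|A\widetilde{\bv_N}|$.

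The paper's mechanism is different and is the missing idea: integrating the differential inequality \eqref{comp38a} along rays of angle $\pm\pi/4$ gives an \emph{integrated} bound $\int_0^{\rho/4}|A\widetilde{\bv_N}(\xi_0+s\Exp^{\pm i\pi/4})|^2\,\rd s\leq c[1+\rho(\beta+\nu\lambda_1)]M_1^2/\nu$, i.e.\ square-integrability of $|A\widetilde{\bv_N}|$ along the foliation, which the energy method delivers for free even though a pointwise bound is inaccessible. Since $A\widetilde{\bv_N}$ is analytic, $|A\widetilde{\bv_N}|$ is subharmonic, and the sub-mean-value property over the disc $D(\zeta,\rho/(4\sqrt{2}))\subset\mB_1$ (split into two parallelograms foliated by the $\pm\pi/4$ rays, then Cauchy--Schwarz) converts these $L^2$-along-rays bounds into the pointwise bound $|A\widetilde{\bv_N}(\zeta)|\leq cM_2$ for $\zeta\in\mB_2$; the restriction to $\mB_2$ is there so the disc fits inside $\mB_1$, not to guarantee elapsed time for smoothing. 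This is the argument of \cite[Lemma 4.4]{FoiasJollyLanRupamYangZhang2014} that the paper adapts, and it is precisely what keeps $I_h$ paired only with $L^2$ quantities while still reaching $\mD(A)$ regularity uniformly in $N$.
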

\begin{proof}
Given $t_0 > 0$ and $|\theta| \leq \pi/4$, let $\widetilde{\rho} = \widetilde{\rho}(t_0, \theta) > 0$ be such that $t_0 + s \Exp^{i \theta} \in \widetilde{\mB}$ for every $s \in (0, \widetilde{\rho})$. We start by taking the inner product of \eqref{compeqDAGalerkin1} with $A \widetilde{\bv_N}(\xi)$, for $\xi = t_0 + s \Exp^{i\theta}$ with $t_0 > 0$, $|\theta| \leq \pi/4$ and $s \in (0, \widetilde{\rho})$. Then, multiplying by $\Exp^{i \theta}$ and taking the real part, it follows that
\begin{multline}\label{apppropboundstildevNineqdervN}
 \frac{1}{2} \frac{\rd}{\rd s}\|\widetilde{\bv_N}\|^2 + \nu \cos \theta |A\widetilde{\bv_N}|^2 + \beta \cos \theta \|\widetilde{\bv_N}\|^2 \leq \\
 \leq |(\f, A\tvN)| + |(B(\tvN,\tvN), A\tvN)| + \beta |(I_h(\tvN) - \tvN, A \tvN)| + \beta |(I_h(\widetilde{\bu}), A \tvN)|.
\end{multline}

Applying Cauchy-Schwarz, Young's inequality and, in particular, inequality \eqref{ineqBG} to estimate the second term on the right-hand side of \eqref{apppropboundstildevNineqdervN}, as well as property \eqref{propIh} of $I_h$ and hypothesis \eqref{propvcondbetah} to estimate the last two terms, we obtain that
\begin{multline}\label{apppropboundstildevNineqdervN2}
  \frac{\rd}{\rd s}\|\widetilde{\bv_N}\|^2 + \frac{\nu}{5} |A\tvN|^2 + \frac{\beta}{5} \|\tvN\|^2 \\
  \leq 15 \beta \|\widetilde{\bu}\|^2 + 15 \frac{|\f|^2}{\nu} + c \frac{\|\tvN\|^4}{\nu} \left[ 1 + \log \left( \frac{|A\tvN|}{\lambda_1^{1/2} \|\tvN\|} \right)\right],
\end{multline}
where we have also used that $\cos \theta \geq 1/\sqrt{2}$ and \eqref{boundtildeuV2}.

Since $s \in (0, \widetilde{\rho}) \mapsto \|\tvN(t_0 + s \Exp^{i \theta})\|$ is continuous and, by \eqref{boundsv}, $\tvN(t_0) \in B_V(8 M_1)$, there exists $s' \in (0, \widetilde{\rho})$ such that
\[
  \|\tvN(t_0 + s \Exp^{i \theta})\| \leq 14 M_1 \quad \forall s \in [0, s'].
\]
Thus, we can define
\be
  s^{\ast} = \sup\{ s' \in (0, \widetilde{\rho}) \,: \, \|\tvN(t_0  + s \Exp^{i \theta})\| \leq 14 M_1 \,\, \forall s \in [0, s'] \}.
\ee
Suppose that $s^{\ast} < \rho$, with $\rho$ as given in \eqref{defrhoradiusanal}. Hence, from \eqref{boundtildeuV2} and \eqref{apppropboundstildevNineqdervN2}, we obtain that, for all $s \in [0, s^{\ast}]$,
\begin{multline}\label{comp38}
   \frac{\rd}{\rd s}\|\widetilde{\bv_N}\|^2 + \frac{\nu}{10} |A\tvN|^2 + \frac{\beta}{5} \|\tvN\|^2 \\
   + \frac{\nu \lambda_1}{10} \left\{ \frac{|A\tvN|^2}{\lambda_1 \|\tvN\|^2} - c \frac{M_1^2}{\nu^2 \lambda_1} \left[1 + \log \left( \frac{|A\tvN|^2}{\lambda_1 \|\tvN\|^2} \right) \right] \right\} \|\tvN\|^2 \leq 15 \beta M_1^2 + 15 \frac{|\f|^2}{\nu}.
\end{multline}
Then, using \eqref{ineqminlog} and hypothesis \eqref{propvcondbeta}, yields
\be\label{comp38a}
  \frac{\rd}{\rd s}\|\widetilde{\bv_N}\|^2 + \frac{\nu}{10} |A\tvN|^2 + \frac{\beta}{10} \|\tvN\|^2 \leq  15 (\beta +\nu \lambda_1) M_1^2,
\ee
where we have also used \eqref{boundf} in order to estimate the last term in the right-hand side of \eqref{comp38}. Then, applying Poincar\'e inequality to the second term on the left-hand side of \eqref{comp38a}, we obtain
\be\label{apppropboundstildevNineqdervN3}
  \frac{\rd}{\rd s}\|\tvN\|^2 + \frac{\beta + \nu \lambda_1}{10} \|\tvN\|^2 \leq 15 (\beta +\nu \lambda_1) M_1^2.
\ee
This implies that
\be\label{apppropboundstildevNineqdervN5}
  \|\tvN (t_0 + s \Exp^{i \theta})\|^2 \leq \|\tvN (t_0)\|^2 \Exp^{-\frac{\beta + \nu \lambda_1}{10}s} + 150 M_1^2  (1 - \Exp^{- \frac{\beta + \nu \lambda_1}{10}s}) \quad \forall s \in [0, s^{\ast}].
\ee
Thus, in particular,
\be
  \|\tvN(t_0 + s^{\ast} \Exp^{i\theta})\| \leq 13 M_1,
\ee
which, by the definition of $s^{\ast}$, is a contradiction. Therefore, $s^{\ast} \geq \rho$ and
\be\label{apppropboundstildevNineqdervN6}
  \|\tvN(\xi)\| \leq 13 M_1 \quad \forall \xi \in \mB_1.
\ee
As a consequence, $\mB_1 \subset \widetilde{\mB} \cup \{0\}$. Now, \eqref{bounddertildevNV} follows from \eqref{boundtildevNV} and Cauchy's integral formula.

Next, we show inequality \eqref{boundtildevNDA}. First, let
\be
\mB_1^{\ast} = \mB_1 \cap  \left\{ \xi \in \mathbb{C} \,:\, |\Im (\xi)| \leq \rho/(2 \sqrt{2}) \right\}.
\ee
Considering \eqref{comp38a} for $\xi = \xi_0 + s\Exp^{i \frac{\pi}{4}} \in \mB_1$, with $\xi_0 \in \mB_1^{\ast}$ and $s \in [0, \rho/4]$, and integrating with respect to $s$ on $[0, \rho/4]$, we obtain in particular that
\be\label{apppropboundstildevNineqvN}
 \frac{\nu}{10} \int_0^{\rho/4} |A\tvN(\xi_0 + s\Exp^{i \frac{\pi}{4}})|^2 \rd s \leq \|\tvN(\xi_0)\|^2 + 15 \frac{\rho}{4} (\beta + \nu \lambda_1) M_1^2.
\ee
Hence, using \eqref{apppropboundstildevNineqdervN6}, it follows that
\be\label{apppropestintAtildevN}
  \int_0^{\rho/4} |A\tvN(\xi_0 + s\Exp^{i \frac{\pi}{4}})|^2 \rd s \leq c [1 + \rho(\beta + \nu \lambda_1)] \frac{M_1^2}{\nu} \quad \forall \xi_0  \in \mB_1^{\ast}.
\ee
Analogously, we can show that
\be\label{apppropestintAtildeu2}
\int_0^{\rho/4} |A\widetilde{\vn}(\xi_0 + s \Exp^{- i\frac{\pi}{4}})|^2 \rd s \leq c [1 + \rho(\beta + \nu \lambda_1)] \frac{M_1^2}{\nu} \quad \forall \xi_0  \in \mB_1^{\ast}.
\ee

Now, consider $\zeta = t_0 + i b_0 \in \mB_2$ and let $D(\zeta, \rho/(4 \sqrt{2})) \subset \mB_1$ be the disc centered at $\zeta$ with radius $\rho/(4 \sqrt{2})$. Let $abcdef$ be the polygon shown in the picture below.

\begin{figure}[h!]
	\centering
	\includegraphics[width=0.6\textwidth]{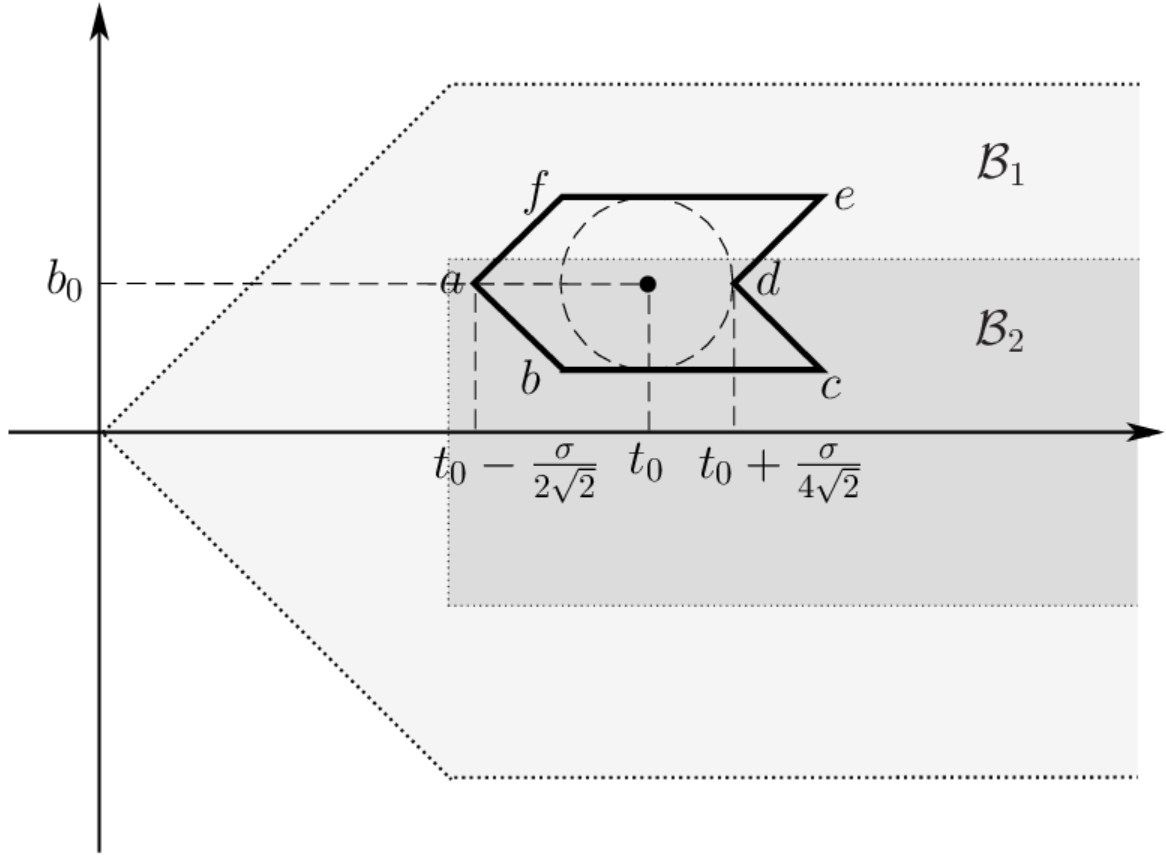}
\end{figure}

Since $A\widetilde{\vn}$ is analytic on $D(\zeta, \rho/(4 \sqrt{2}))$, it follows by the mean value property that
\begin{multline}
	|A\widetilde{\vn}(\zeta)| \leq \frac{32}{\pi \rho^2} \int \int_{D(\zeta, \rho/(4 \sqrt{2}))} |A\widetilde{\vn}(\xi)|\rd \xi \\
	\leq \frac{32}{\pi \rho^2} \left[ \int \int_{\textnormal{adef}} |A\widetilde{\vn}(\xi)| \rd \xi + \int \int_{\textnormal{abcd}} |A\widetilde{\vn}(\xi)| \rd \xi \right].
\end{multline}

Let $\xi \in adef$. We introduce the change of variables:
\be
\xi = t + i b_0 + s \Exp^{i \frac{\pi}{4}}, \quad t \in \left[ t_0 - \frac{\rho}{2 \sqrt{2}}, t_0 + \frac{\rho}{4 \sqrt{2}} \right], \,\, s \in [0, \rho/4].
\ee
Now, using \eqref{apppropestintAtildevN}, we obtain that
\begin{multline}
	\frac{32}{\pi \rho^2} \int \int_{\textnormal{adef}} |A\widetilde{\vn}(\xi)| \rd \xi
	=  \frac{32}{\sqrt{2} \pi \rho^2} \int_{t_0 - \rho/(2 \sqrt{2})}^{t_0 + \rho/(4 \sqrt{2})} \int_0^{\rho/4} |A\widetilde{\vn}(t + i b_0 + s \Exp^{i\frac{\pi}{4}})| \rd s \rd t \\
	\leq c \frac{M_1}{\nu^{1/2} \rho^{1/2}} + c (\beta + \nu \lambda_1)^{1/2} \frac{M_1}{\nu^{1/2}}\\
	 \leq  c\frac{M_1^2}{\nu} \left[ 1 + \log \left( \frac{M_1}{\nu \lambda1^{1/2}} \right) \right]^{1/2} + c \beta^{1/2} \frac{M_1}{\nu^{1/2}},
\end{multline}
where in the last inequality we used the definition of $\rho$ from \eqref{defrhoradiusanal}. Analogously, using \eqref{apppropestintAtildeu2}, one can show that
\be
\frac{32}{\pi \rho^2} \int \int_{\textnormal{abcd}} |A\widetilde{\vn}(\xi)| \rd \xi \leq c\frac{M_1^2}{\nu} \left[ 1 + \log \left( \frac{M_1}{\nu \lambda_1^{1/2}} \right) \right]^{1/2} + c \beta^{1/2} \frac{M_1}{\nu^{1/2}}.
\ee

Therefore, we conclude that
\be
|A\widetilde{\vn}(\zeta)| \leq M_2 \quad \forall \zeta \in \mB_2,
\ee
as desired.

Finally, \eqref{bounddertildevNDA} follows from \eqref{boundtildevNDA} and a direct application of Cauchy's integral formula.
\end{proof}

\begin{rmk}
	We notice that, after a suitable limiting process, a result analogous to Proposition \ref{apppropboundstildevN} is valid for the solution $\bv$ of \eqref{eqDA}. However, since here we are only interested in the Galerkin approximation $\vn$ of $\bv$, we avoid dealing with such technical details.	
\end{rmk}

\begin{rmk}
	Using arguments similar to the ones from the proof of Proposition \ref{apppropboundstildevN}, one can also show that, up to possibly different absolute constants, the same upper bounds from \eqref{boundtildevNDA} and \eqref{bounddertildevNDA} hold for $|A \widetilde{\bu}(\cdot)|$ and $|A\rd \widetilde{\bu}/\rd \xi (\cdot)|$, respectively, with $\widetilde{\bu}$ being a solution of \eqref{compeqNSE}. In particular, this yields uniform bounds of $|A \bu(\cdot)|$ and $|A\rd \bu/\rd \xi (\cdot)|$, with $\bu$ being a solution of \eqref{eqNSE}, which are sharper than the bounds derived in \cite{FoiasJollyLanRupamYangZhang2014}.
\end{rmk}

Now, notice that the result of Proposition \ref{propboundtimederv}, in particular the uniform bound of $\vn$ with respect to the norm in $\mD(A)$ and the uniform bounds of $\rd \vn/\rd t$ with respect to the norms in $V$ and $\mD(A)$, follow from Proposition \ref{apppropboundstildevN} by restricting $\widetilde{\vn}$ to $[0,\infty)$ and choosing, for example,
\[ K_1 = [\rho/\sqrt{2}, \infty)\,,\quad K_2 = [3 \rho/(2 \sqrt{2}), \infty),
\]
so that
\[ r_1 = \dist(K_1,\partial \mB_1) = \rho/2, \quad r_2 = \dist(K_2,\partial \mB_2) = \rho/(2 \sqrt{2}).
\]

\section*{Acknowledgments}
EST would like to thank the \'{E}cole Polytechnique for its kind hospitality, where this work was completed, and the \'{E}cole Polytechnique Foundation for its partial financial support through the 2017-2018 ``Gaspard Monge Visiting Professor" Program. This work is supported in part by the NSF grant number DMS-1516866 and by the ONR grant N00014-15-1-2333. The work of EST was also supported in part by the Einstein Stiftung/Foundation - Berlin, through the Einstein Visiting Fellow Program.

\bibliographystyle{amsplain}
\bibliography{mybib}
\end{document}